\DeclareMathAlphabet\mathbfcal{OMS}{cmsy}{b}{n}
\newtheorem{definition}{Definition}[section]
\newtheorem{theorem}[definition]{Theorem}
\newtheorem{lemma}[definition]{Lemma}
\newtheorem{remark}[definition]{Remark}
\newtheorem{assumption}{Assumption}[section]
\newcommand{\IR}{\mathbb{R}}
\newcommand{\X}{\mathcal{X}}
\newcommand{\IS}{\mathcal{S}}
\newcommand{\B}{\mathcal{B}}
\newcommand{\G}{\mathcal{E}}
\newcommand{\Z}{\mathcal{Z}}
\newcommand{\IX}{\mathbfcal{X}}
\newcommand{\IY}{\mathbfcal{Y}}
\newcommand{\IT}{\mathbfcal{T}}
\newcommand{\IIN}{\mathbfcal{N}}
\newcommand{\bfA}{{\bm{A}}}
\newcommand{\bfB}{{\bm{B}}}
\newcommand{\bfC}{{\bm{C}}}
\newcommand{\bfP}{{\bm{P}}}
\newcommand{\bfQ}{{\bm{Q}}}
\newcommand{\bfH}{{\bm{H}}}
\newcommand{\bfz}{{\bm{z}}}
\newcommand{\bfy}{{\bm{y}}}
\newcommand{\bfx}{{\bm{x}}}
\newcommand{\bfd}{{\bm{d}}}
\newcommand{\bfxi}{{\bm{\xi}}}
\newcommand{\bfzeta}{{\bm{\zeta}}}
\newcommand{\lr}{\llbracket}
\newcommand{\rr}{\rrbracket}
\newcommand{\argmin}{\hbox{argmin}}
\begin{document}
	\baselineskip 18pt
\title[Extended ASAP for nonconvex nonsmooth problems]{Extended alternating structure-adapted proximal gradient algorithm for nonconvex nonsmooth problems}

\author{Ying Gao}
\address{School of Mathematical Sciences, Beihang University, Beijing, 100191, China.}
\email{yinggao@buaa.edu.cn}

\author{Chunfeng Cui}
\address{LMIB of the Ministry of Education, School of Mathematical Sciences, Beihang University, Beijing, 100191, China.}
\email{chunfengcui@buaa.edu.cn}

\author{Wenxing zhang}
\address{School of Mathematical Sciences,  University of Electronic Science and Technology of China, Chengdu 611731, China.}
\email{zhangwx@uestc.edu.cn}

\author{Deren Han}
\address{LMIB of the Ministry of Education, School of Mathematical Sciences, Beihang University, Beijing, 100191, China.}
\email{handr@buaa.edu.cn}

\subjclass[2010]{65K10, 90C26, 90C30, 65K05, 65F22, 49M37}

\keywords{ Nonconvex nonsmooth function, multiblock nonseparable structure,  restricted prox-regularity, Aubin property, global convergence, coupled tensor decomposition}

\begin{abstract}
Alternating structure-adapted proximal (ASAP) gradient algorithm (M.
Nikolova and P. Tan, SIAM J Optim, 29:2053-2078, 2019) 
has drawn much attention due to its efficiency in solving nonconvex nonsmooth optimization problems. However, the multiblock nonseparable structure confines the performance of ASAP to far-reaching practical problems, e.g., coupled tensor decomposition. In this paper, we propose an extended ASAP (eASAP) algorithm for nonconvex nonsmooth optimization whose objective is the sum of two nonseperable functions and a coupling one. By exploiting the blockwise restricted prox-regularity, eASAP is capable of  minimizing the objective whose coupling function is multiblock nonseparable. 
Moreover, we analyze the  global convergence of eASAP by virtue of the Aubin property on partial subdifferential mapping and the Kurdyka-{\L}ojasiewicz property on the objective. Furthermore, the sublinear convergence rate of eASAP is built upon the proximal point algorithmic framework under some mild conditions. 
Numerical simulations on multimodal data fusion demonstrate the compelling performance of the proposed method.
	\keywords{Multiblock nonseparable structure  \and  nonconvex nonsmooth coupling term \and Block restricted prox-regularity \and Aubin property \and Convergence analysis \and Coupled tensor decomposition \and Multimodal data fusion }
\end{abstract}

\maketitle

\section{Introduction}
In this paper, we focus on the structured optimization problem
\begin{align}\label{geP}
	\min\limits_{(\bfx,\bfy)\in\IR^{n}\times\IR^{m} } J(\bfx,\bfy):=F(\bfx_1,\ldots,\bfx_s)+G(\bfy_1,\ldots,\bfy_t)+H(\bfx,\bfy),
\end{align}
where $\bfx=(\bfx_1,\ldots,\bfx_s)\in\IR^n$ and  $\bfy=(\bfy_1,\ldots,\bfy_t)\in\IR^m$ are variables with subvector $\bfx_i\in\IR^{n_i}$ ($i=1,\ldots,s$) and  $\bfy_j\in\IR^{m_j}$ ($j=1,\ldots,t$), respectively; 
$F:\IR^{n}\to\IR_\infty:=\IR\cup\{+\infty\}$,  $G:\IR^{m}\to\IR_\infty$ and $H:\IR^{n}\times\IR^{m}\to\IR_\infty$  are proper closed functions. Problem \eqref{geP} captures a plethora of  models in diversified applications, such as candidate extraction  in image colorization \cite{color2015,tan2019inertial}, blind source separation in signal processing \cite{beck2016alternating,chung2010efficient}, and coupled tensor decomposition in multimodal data fusion   \cite{chatzichristos2022early,roald2022admm}.

Over the past decades, we have witnessed a flurry of research activities on structured optimization problem \eqref{geP} with two blocks, i.e., $s=t=1$. A canonical method for solving \eqref{geP} is the block coordinate descent method (BCD), which minimizes $J$ alternatively over $\bfx$ and $\bfy$.  
The convergence of BCD can be guaranteed under some conditions \cite{bertsekas2015parallel},
e.g., strict convexity of the objective $J$ or boundedness of its level set. Particularly, when $J$ is continuously differentiable, Bertsekas  \cite{bertsekas1997nonlinear} stated that the subproblems of BCD admit unique optima and the sequence $\{(\bfx^k,\bfy^k)\}_{k=0}^\infty$ generated by BCD converges to the critical point of \eqref{geP}. To weaken the convergent requirements of BCD, Auslender \cite{auslender1992asymptotic} proposed the following proximal BCD
\begin{subnumcases}{\label{pbcd}}
	\bfx^{k+1}\in\underset{\bfx\in\IR^n}{\argmin} \Big\{ J(\bfx,\bfy^k)+\frac{\tau}{2}\|\bfx-\bfx^k\|_2^2\Big\},\\
	\bfy^{k+1}\in\underset{\bfy\in\IR^m}{\argmin} \Big\{J(\bfx^{k+1},\bfy)+\frac{\sigma}{2}\|\bfy-\bfy^k\|_2^2\Big\},
\end{subnumcases}
where $\tau>0$ and $\sigma>0$ are stepsizes. Therein, the  convergence of \eqref{pbcd} was analyzed when $J$ is  convex. By exploiting
the Kurdyka-{\L}ojasiewicz (K\L) property, Attouch et al. \cite{attouch2010proximal} studied 
the global convergence of \eqref{pbcd} when $J$ is nonconvex nonsmooth. 
Thereby, the rationale of  K{\L} property motivates a great deal of theoretical and algorithmic advances for solving nonconvex nonsmooth optimization (see e.g., \cite{bot2019proximal,boct2020proximal,li2018calculus,GuoPong2015,yang2017alternating} and references therein). 

Practically, the subproblems in \eqref{pbcd} are nontrivial to admit closed-form solutions. When the component function, e.g., $F$, $G$ or $H$ is continuously differentiable, the linearization on these differentiable component functions facilitate the development of efficient numerical algorithms. For instance, Bolte et al. \cite{bolte2014proximal} proposed the following proximal alternating linearized minimization (PALM) algorithm when $H$ is continuously differentiable
\begin{subnumcases}{\label{palm}}
	\bfx^{k+1}\in\underset{\bfx\in\IR^n}{\argmin}\Big\{  F(\bfx)+\langle \bfx,\nabla_{\bfx} H(\bfx^k,\bfy^k)\rangle+\frac{\tau^k}{2}\|\bfx-\bfx^k\|_2^2\Big\},\\
	\bfy^{k+1}\in\underset{\bfy\in\IR^m}{\argmin}\Big\{ G(\bfy)+\langle \bfy,\nabla_{\bfy} H(\bfx^{k+1},\bfy^k)\rangle+\frac{\sigma^k}{2}\|\bfy-\bfy^k\|_2^2\Big\},
\end{subnumcases}
where $\tau^k>0$ and $\sigma^k>0$ are the varying stepsizes determined by the Lipschitz constants of $\nabla H(\cdot,\bfy^k)$ and $\nabla H(\bfx^{k+1},\cdot)$, respectively. With the 
K{\L} property, it was verified that any bounded sequence $\{ (\bfx^k,\bfy^k)\}_{k=0}^\infty$ generated by \eqref{palm} can converge globally to the critical point of \eqref{geP}. Afterwards, we have witnessed numerous   ameliorations of PALM by deploying inexact approximation, inertial acceleration, and stochastic strategies (see, e.g.,\cite{hu2023convergence,pock2016inertial,li2015accelerated,driggs2021stochastic} and references therein). Alternatively, Nikolova and Tan \cite{nikolova2019alternating} devised an alternating structure-adapted proximal (ASAP) gradient descent algorithm when $F$, $G$ admit Lipschitz continuous gradients
\begin{subnumcases}{\label{asap}}
	\bfx^{k+1}\in\underset{\bfx\in\IR^n}{\argmin}\Big\{ H(\bfx,\bfy^k)+\langle \bfx,\nabla_{\bfx} F(\bfx^k) \rangle+\frac{\tau}{2}\|\bfx-\bfx^k\|_2^2\Big\},
	\\
	\bfy^{k+1}\in\underset{\bfy\in\IR^m}{\argmin}\Big\{  H(\bfx^{k+1},\bfy)+\langle \bfy,\nabla_{\bfy}G(\bfy^k) \rangle+\frac{\sigma}{2}\|\bfy-\bfy^k\|_2^2\Big\}.
\end{subnumcases}
In contrast to PALM \eqref{palm}, the stepsizes $(\tau,\sigma)$ in ASAP \eqref{asap} involve the Lipschitz constants of $\nabla F$ and $\nabla G$. The global convergence of ASAP was analyzed in \cite[Theorem 5.15]{nikolova2019alternating} under the assumptions: (C1) $H$ can be split as the sum of a nonsmooth function and a partially differential function in variable $\bfx$; and (C2) the objective $J$ admits the K{\L} property at the critical point. Thenceforth, variants of ASAP were developed by deploying inertial acceleration, extrapolation techniques, and stochastic strategies (see e.g.,  \cite{tan2019inertial,yang2022some,gao2023alternating,jia2024stochastic} and references therein). It is worth noting that assumptions (C1)-(C2) are indispensable for the convergence analysis of these ASAP variants. 

PALM and ASAP can be easily extended to solve \eqref{geP} with 
\begin{align}\label{exten2}
	F(\bfx_1,\ldots,\bfx_s)=\sum\limits_{i=1}^s F_i(\bfx_i),\quad G(\bfy_1,\ldots,\bfy_t)=\sum\limits_{j=1}^t G_j(\bfy_t).
\end{align}  
The interested reader can refer to, e.g.,  \cite{xu2013block,nikolova2019alternating}, for more discussions on the applications of PALM and ASAP to \eqref{geP} with separable $F$ and $G$ as in \eqref{exten2}. However, to the best of our knowledge, few literature were devoted to solving \eqref{geP} with   nonseparable $F$ and $G$, particularly, the case of $H$ is nonseparable (possibly nonconvex and nonsmooth) whilst $F$, $G$ are continuously differentiable.

In this paper, pursuing the track of ASAP in \cite{nikolova2019alternating}, we focus on an extended ASAP algorithm (``eASAP"  for short) to solve \eqref{geP} with the  multiblock nonseparable structure. The  Gauss-Seidel algorithmic framework is exploited to minimize $\bfx_i$- and $\bfy_j$-subproblems  cyclically. Besides, the aforementioned assumption (C1) for guaranteeing the global convergence of ASAP is weaken by deploying some novel analytical techniques. The contributions of this paper are summarized as follows: 
\begin{enumerate}

	\item[1.] Nonseparable extension. Compared to the traditional separable extension \eqref{exten2}, we generalize the ASAP to solve \eqref{geP} with multiblock nonseparable structure, which subsumes a variety of models in, e.g., coupled tensor decomposition. As a peculiarity, the proposed eASAP can reduce to ASAP \eqref{asap}. 
	
	\item[2.] Weaker assumption. The global convergence of ASAP \eqref{asap} requires the splitting property on the component function $H$. However, under the blockwise restricted prox-regularity (see Definition \ref{block_h})  on $H$ in \eqref{geP}, we verify that eASAP can be applicable to diversified nonconvex nonsmooth optimization, even if $H$ is nonseparable. Furthermore, we prove that any limit point of the sequence generated by eASAP is a critical point of \eqref{geP}.
	
	\item[3.] Novel analytical techniques. Essentially, there are two difficulties in the convergence analysis of ASAP \eqref{asap}: (i) closedness of the partial subdifferentials of $J$;  and   (ii) boundedness of subdifferentials. Under the Aubin property on partial subdifferential mapping and the K{\L}  property on objective, we establish the global convergence of eASAP. Moreover, we build upon the sublinear convergence rate of eASAP by virtue of the proximal point algorithmic framework.  
	
\end{enumerate} 

The applications of \eqref{geP} to coupled tensor decomposition are recently  gaining much attention in multimodal data fusion \cite{chatzichristos2022early,kanatsoulis2018hyperspectral,li2018fusing}. A large number of practical models fall into the form of \eqref{geP}. For instance, the Bayesian framework based coupled tensor decomposition model  in \cite{farias2016exploring} reads
\begin{align}\label{optcp}
	\min\limits_{\bfA,\bfB}\; \frac{1}{2}\Big\|\IY-\lr \bfA_1,\bfA_2,\bfA_3\rr\Big\|_F^2+\frac{1}{2}\Big\|\IY'-\lr \bfB_1,\bfB_2,\bfB_3\rr\Big\|_F^2+\mu H(\bfA,\bfB),
\end{align}
where $\IY\in\IR^{I_1\times I_2\times I_3}$ and $ \IY'\in\IR^{J_1\times J_2 \times J_3}$ are three-order  tensors; $\bfA_l\in\IR^{I_l\times r}$  ($l=1,2,3$) and  $\bfB_l\in\IR^{J_l\times r}$ ($l=1,2,3$) are factor matrices from the CANDECOMP/PARAFAC (CP) decomposition of tensors $\IY$ and $\IY'$, respectively; $\bfA=[\bfA_1;\bfA_2;\bfA_3]$
and $\bfB=[\bfB_1;\bfB_2;\bfB_3]$ denote the stacked matrices;  $H$ is a nonseparable function in variables $\bfA$, $\bfB$, which quantifies the statistical dependence between $\bfA$ and $\bfB$; and $\mu>0$ accounts for the trade-off among objectives;  Some preliminaries of tensor are elaborated in section \ref{sec:num}. Table \ref{form1} lists several  canonical coupling functions for $H$ in \eqref{optcp}. Therein, the ``vec" denotes the vectorization of a matrix.
$\bfP$ and $\bfQ$ are given structured matrix and positive definite matrix, respectively. Actually, \eqref{optcp} falls into the form of \eqref{geP} with nonseparable (possibly nonconvex nonsmooth) coupling function $H$. Accordingly, it is desirable to develop an efficient and provably convergent algorithm for \eqref{geP} by extending ASAP algorithm. 
\begin{table}[H]
	\caption{Several choices of  $H$ in \eqref{optcp}.}
	\renewcommand\arraystretch{1.8}
	\centering
	\scalebox{1}{
		\begin{tabular}{ccc}
			\toprule[1pt]
			Coupling distribution & $H(\bfA,\bfB)$ & Properties \\
			\midrule[0.8pt] 
			Hybrid Gauss &$\big\|\textnormal{vec}(\bfA- \bfP\bfB)\big\|_2^2$& convex and  smooth\\
			Joint Gauss &$\big\|\hbox{vec}([\bfA;\bfB])\big\|_\bfQ^2$& convex and  smooth \\
			Laplacian &$ \big\|\hbox{vec}(\bfA-\bfP\bfB)\big\|_1$& convex  and nonsmooth\\
			Uniform  & $\big\|\hbox{vec}(\bfA-\bfP \bfB)\big\|_\infty$ & convex and nonsmooth\\
			Cauchy-type  & $\big\|\hbox{vec}(\bfA-\bfP\bfB)\big\|_p^p$ $(0<p<1)$ & nonconvex and nonsmooth\\
			\bottomrule[1pt]
	\end{tabular}}
	\label{form1}
\end{table}

The rest of this paper is organized as follows. In section \ref{sec:p}, we summarize some preliminaries for the upcoming discussions. We present the iterative scheme of eASAP in section \ref{sec:al}, followed by its global convergence and convergence rate analysis under some prerequisites in  section \ref{sec:analysis}. In section \ref{sec:num}, we test a class of coupled tensor decomposition problems on both synthetic and real data to demonstrate the numerical performance of the proposed method. Finally, some concluding remarks are drawn in section \ref{sect6}.

\section{Preliminaries}\label{sec:p}
For $\bfx:=(x_1,x_2,\ldots,x_n)^\top\in\IR^n$, let $\|\bfx\|_p:=(\sum_{i=1}^n|x_i|^p)^{1/p}$ $(1\le p<\infty)$ and $\|\bfx\|_{\infty}:=\max_{i=1,\ldots,n}|x_i|$ denote the $\ell^p$- and $\ell^\infty$- norm of $\bfx$, respectively. Particularly, $\|\bfx\|:=\|\bfx\|_2$ for brevity. Let $\bm{I}$ (resp., $\bf{0}$) denote the identity matrix (resp., zero vector/matrix) whose dimension can be clear from the context. The distance from $\bfx\in\IR^n$ to a set $\Omega\subset\IR^n$ is defined by $\textnormal{dist}(\bfx,\Omega):=\inf_{\bfy\in \Omega}\|\bfx-\bfy\|$. $\B_p^r(\bfx):=\{\bfx'\in\IR^n\mid\|\bfx'-\bfx\|_p\leq r\}$
denotes the $\ell^p$-norm ball centered at $\bfx$ with radius $r>0$. For brevity, we abuse  $\B^r(\bfx)$ or $\B(\bfx)$ as the neighborhood of $\bfx$  regardless of its metric and/or radius.

The function $f:\IR^n\to\IR_\infty$ is proper if $\textnormal{dom}(f):=\{\bfx\in\IR^n\mid f(\bfx)<+\infty\}$ is nonempty, $f$ is closed if $\hbox{epi}(f):=\{(\bfx,r)\in\IR^n\times\IR\mid f(\bfx)\le r\}$ is closed, and $f$ is lower bounded if $\inf_{\bfx\in\textnormal{dom}(f)} f(\bfx)>-\infty$. The proximity of a proper closed function $f$, denoted by $\hbox{prox}_f$, is defined by 
\begin{align}\label{proximity}	\hbox{prox}_f(\bfx):=(\bm{I}+\partial f)^{-1}(\bfx)=\underset{\bfx'\in\IR^n}{\hbox{argmin}}\;\Big\{ f(\bfx')+\frac{1}{2}\|\bfx'-\bfx\|^2\Big\}\quad\forall\bfx\in\IR^n,
\end{align}
where $\partial f:\IR^n\to2^{\IR^n}$ is the subdifferential of $f$ as follows.

\subsection{Subdifferential and partial subdifferential}
The definitions of (partial) subdifferential 
can be referred to, e.g., \cite{Varia-Ana,nikolova2019alternating}.
\begin{definition}[\cite{Varia-Ana}]\label{def:subdif}
	Let $f:\IR^n\to\IR_\infty$ be proper closed.
	\begin{itemize}
		\item[(i)] 
		The Fr\'echet subdifferential of $f$ at $\bfx\in\textnormal{dom}(f)$  is defined by
		\begin{align*}
			\hat{\partial}f(\bfx)=\{\bfd\in\IR^n\mid  f(\bfy)\ge f(\bfx)+\langle \bfd,\bfy-\bfx \rangle+o(\|\bfy-\bfx\|)\}.
		\end{align*}
		\item[(ii)] The limiting subdifferential of $f$ at $\bfx\in\textnormal{dom}(f)$ is defined by 
		\begin{align*}
			\partial f(\bfx)=\{\bfd\in\IR^n\mid \exists\; \bfx^k\to \bfx,\;f(\bfx^k)\to f(\bfx),\; \bfd^k\in\hat{\partial} f(\bfx^k)\to\bfd,\;\textnormal{as}\; k\to\infty\}.
		\end{align*}
	\end{itemize}
\end{definition}
\noindent Notationally, $\hat{\partial}f(\bfx)=\partial f(\bfx)=\emptyset$ for all $\bfx\notin\textnormal{dom}(f)$. It follows from Definition \ref{def:subdif} that $\hat{\partial}f(\bfx)\subset\partial f(\bfx)$ for all    $\bfx\in\textnormal{dom}(f)$. Moreover, $\hat{\partial}f(\bfx)$ is closed  convex whilst $\partial f(\bfx)$ is merely closed. Particularly, if $f$ is convex, then
\begin{align*}
	\hat{\partial}f(\bfx)=\partial f(\bfx)=\{\bfd\in\IR^n\mid f(\bfy)\ge f(\bfx)+\langle \bfd,\bfy-\bfx\rangle\;\;\;\forall \bfy\in\IR^n\}.
\end{align*}

The partial subdifferential plays a crucial role in nonsmooth analysis. The partial subdifferential of a proper closed function $h:\IR^n\times\IR^m\to\IR_\infty$ with respect to $\bfx$ (or equivalently, the subdifferential of $h(\cdot,\bfy)$) is denoted by $\partial_\bfx h$. The parametric closedness of partial subdifferential is defined as follows. 	\begin{definition}[\cite{nikolova2019alternating}] 
	Let $h:\IR^n\times\IR^m\to\IR_\infty$ be proper closed and let  $\{(\bfx^k,\bfy^k)\}_{k=0}^\infty\subset\textnormal{dom}(h)$ be a sequence converging to $(\bar{\bfx},\bar{\bfy})$.  The partial subdifferential $\partial_\bfx h$ is said to be parametrically closed at $(\bar{\bfx},\bar{\bfy})$ with respect to $\{ (\bfx^k,\bfy^k)\}_{k=0}^\infty$ if, for any sequence $\{\bfd^k_\bfx\}_{k=0}^\infty$ satisfying $\bfd^k_\bfx\in\partial_\bfx h(\bfx^k,\bfy^k)$ and $\lim\limits_{k\to\infty}\bfd^k_\bfx=\bar{\bfd}_\bfx$, there is $\bar{\bfd}_\bfx\in\partial_\bfx h(\bar{\bfx},\bar{\bfy})$.

\end{definition}

The parametric closedness of $\partial_{\bfy}h$ can be defined accordingly. Note that the parametric closedness of partial subdifferential is possibly invalid for generic function, even if the subdifferential is closed (see, e.g., \cite[Example 4.5]{nikolova2019alternating} for counterexample). We now present a class of functions with parametrically closed partial subdifferential. 

\subsection{Restricted prox-regularity}\label{sec:subR}
Let $c>0$ and  $f:\IR^n\to\IR_\infty$ be proper   closed. We define the  exclusion set by 
\begin{align*}
	\mathcal{S}_{f}^c:=\{\bfx\in\textnormal{dom}(f)\mid \|\bfd\|>c\;\;\;\forall \bfd\in\partial f(\bfx)\},
\end{align*}
which contains the points in  $\hbox{dom}(f)$ that the norms of subgradients are greater than a threshold $c$. For all $c_1\geq c_2>0$, we have $\mathcal{S}^{c_1}_f\subseteq\mathcal{S}^{c_2}_f$. 
\begin{definition}[\cite{wang2019global}]\label{res-prox}
	A proper closed function $f:\IR^n\to\IR_\infty$ is said to be restricted prox-regular if,  for any $c>0$ and bounded set  $\Omega\subset\textnormal{dom}(f)$, there exists $\gamma>0$ such that
	\begin{align*}
		f(\bfy)\ge f(\bfx)+\langle \bfd,\bfy-\bfx  \rangle-\frac{\gamma}{2}\|\bfy-\bfx\|^2\;\;\;\forall\bfx\in \Omega\backslash \mathcal{S}^{c}_f,\;\bfy\in \Omega,\;\bfd\in\partial f(\bfx),\;\|\bfd\|\le c.
	\end{align*} 
\end{definition}
A large amount of functions are restricted prox-regular, e.g., convex function, semiconvex function, smooth function with Lipschitz continuous gradient,   $\ell^p$-quasi-norm function ($0<p<1$), Schatten-$p$ quasi-norm function
($0<p<1$), and indicator function of compact smooth manifold (see \cite{wang2019global} for more details). 

\begin{definition}[\cite{bolte2010characterizations}]\label{semi}
	A proper closed function $f:\IR^n\to\IR_\infty$ is said to be semiconvex with modulus $\gamma>0$ if  $f+\frac{\gamma}{2}\|\cdot\|^2$ is convex, i.e., 
	\begin{align}
		f(\bfy)\ge f(\bfx)+\langle\bfd,\bfy-\bfx\rangle-\frac{\gamma}{2}\|\bfy-\bfx\|^2\;\;\; \forall \bfx, \bfy\in\textnormal{dom}(f)\;\;\hbox{and}\;\;\bfd\in\partial f(\bfx).
	\end{align} 
\end{definition}
In contrast to Definition \ref{res-prox}, the semiconvexity is essentially a special case of restricted prox-regularity with $\IS_{f}^c=\emptyset$.

Let $C^{k,p}_L(\IR^n)$ denote the set of $k$ times continuously differentiable functions on $\IR^n$ whose $p$th order derivatives are $L$-Lipschitz continuous. The following lemma warrants a sufficient descent property of the functions in $C_L^{1,1}(\IR^n)$  (see, e.g.,  \cite{bertsekas1997nonlinear}).

\begin{lemma}\label{nablaf}
	Let $f\in C_{L_f}^{1,1}(\IR^n)$. Then
	\begin{align*}
		\big\lvert f(\bfy)-f(\bfx)-\langle \nabla f(\bfx),\bfy-\bfx\rangle\big\rvert\le \frac{L_f}{2}\|\bfy-\bfx\|^2\quad\forall \bfx,\bfy\in\IR^n.
	\end{align*}
\end{lemma}

\begin{lemma}\label{Sumlipshitz}
	For a bounded set $\Omega\subset\IR^n$, let $f_1:\IR^n\to\IR_{\infty}$ be restricted prox-regular on $\Omega$ and let $f_2\in C_{L}^{1,1}(\Omega)$. Then $f=f_1+f_2$ is also restricted prox-regular on $\Omega$.
\end{lemma}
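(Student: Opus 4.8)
The plan is to unpack both definitions and combine the two inequalities directly, being careful about the exclusion set of the sum $f = f_1 + f_2$. Fix $c > 0$ and a bounded set $\Omega \subset \mathrm{dom}(f) = \mathrm{dom}(f_1)$ (since $f_2$ is finite-valued on $\Omega$, $\mathrm{dom}(f)$ coincides with $\mathrm{dom}(f_1)$ there). Because $f_2 \in C^{1,1}_L(\Omega)$ is differentiable, the sum rule for limiting subdifferentials gives $\partial f(\bfx) = \partial f_1(\bfx) + \nabla f_2(\bfx)$ for every $\bfx \in \Omega$. Hence any $\bfd \in \partial f(\bfx)$ can be written $\bfd = \bfd_1 + \nabla f_2(\bfx)$ with $\bfd_1 \in \partial f_1(\bfx)$, and conversely.

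The key step is to choose the right threshold for applying the restricted prox-regularity of $f_1$. Since $\Omega$ is bounded and $\nabla f_2$ is ($L$-Lipschitz, hence) continuous on $\Omega$, the quantity $M := \sup_{\bfx \in \Omega} \|\nabla f_2(\bfx)\|$ is finite. I would then invoke Definition \ref{res-prox} for $f_1$ with the enlarged threshold $c' := c + M$ and the same bounded set $\Omega$, obtaining $\gamma_1 > 0$ such that
\begin{align*}
	f_1(\bfy) \ge f_1(\bfx) + \langle \bfd_1, \bfy - \bfx \rangle - \frac{\gamma_1}{2}\|\bfy - \bfx\|^2
\end{align*}
for all $\bfx \in \Omega \setminus \mathcal{S}_{f_1}^{c'}$, $\bfy \in \Omega$, $\bfd_1 \in \partial f_1(\bfx)$ with $\|\bfd_1\| \le c'$. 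The point of using $c'$ is that if $\bfx \notin \mathcal{S}_f^c$ — meaning there exists some $\bfd \in \partial f(\bfx)$ with $\|\bfd\| \le c$ — then the corresponding $\bfd_1 = \bfd - \nabla f_2(\bfx)$ satisfies $\|\bfd_1\| \le c + M = c'$, and moreover $\bfx \notin \mathcal{S}_{f_1}^{c'}$ as witnessed by this very $\bfd_1$; so the prox-regularity inequality for $f_1$ applies. I would combine this with the quadratic lower bound for $f_2$ coming from Lemma \ref{nablaf}, namely $f_2(\bfy) \ge f_2(\bfx) + \langle \nabla f_2(\bfx), \bfy - \bfx \rangle - \frac{L}{2}\|\bfy - \bfx\|^2$. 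Adding the two inequalities, writing $\bfd = \bfd_1 + \nabla f_2(\bfx)$, yields
\begin{align*}
	f(\bfy) \ge f(\bfx) + \langle \bfd, \bfy - \bfx \rangle - \frac{\gamma_1 + L}{2}\|\bfy - \bfx\|^2,
\end{align*}
so $f$ is restricted prox-regular on $\Omega$ with modulus $\gamma := \gamma_1 + L$.

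The main subtlety — and the one place the argument needs care rather than computation — is the handling of the exclusion sets: one must check that $\bfx \notin \mathcal{S}_f^c$ actually supplies a subgradient $\bfd_1$ of $f_1$ that is both admissible for Definition \ref{res-prox} (norm $\le c'$) and witnesses $\bfx \notin \mathcal{S}_{f_1}^{c'}$, and that the same $\bfd = \bfd_1 + \nabla f_2(\bfx)$ used in the final inequality is the one with $\|\bfd\| \le c$. A clean way to organize this: fix $\bfx \in \Omega \setminus \mathcal{S}_f^c$ and pick any $\bfd \in \partial f(\bfx)$ with $\|\bfd\| \le c$; set $\bfd_1 := \bfd - \nabla f_2(\bfx) \in \partial f_1(\bfx)$; then $\|\bfd_1\| \le c'$, so $\bfx \in \Omega \setminus \mathcal{S}_{f_1}^{c'}$, and the prox-regularity bound for $f_1$ applies to this $(\bfx, \bfy, \bfd_1)$. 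Everything else is the routine addition above. (One should also remark that if $f_1$ is merely restricted prox-regular on a bounded set rather than globally, the statement as phrased — "on $\Omega$" — is exactly what the argument delivers, since only the fixed $\Omega$ and the single enlarged threshold $c'$ are ever used.)
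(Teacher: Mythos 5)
Your proposal is correct and follows essentially the same route as the paper's proof: bound $\|\nabla f_2\|$ on the bounded set $\Omega$ by a constant, apply the restricted prox-regularity of $f_1$ at the enlarged threshold $c+M$, verify the inclusion of exclusion-set complements via the triangle inequality, and add the quadratic lower bound for $f_2$ from Lemma \ref{nablaf}. Your handling of the exclusion sets (choosing a specific witness $\bfd$ with $\|\bfd\|\le c$ and tracking the corresponding $\bfd_1$) is in fact slightly more careful than the paper's phrasing, but the argument is the same.
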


\begin{proof}
	Since $f_2\in C_{L}^{1,1}(\Omega)$, it follows from Lemma \ref{nablaf} that
	\begin{align}\label{nablalip}
		f_2(\bfy)\ge f_2(\bfx)+\langle \nabla f_2(\bfx),\bfy-\bfx\rangle-\frac{L}{2}\|\bfy-\bfx\|^2\quad \forall \bfx,\bfy\in \Omega.
	\end{align} 
	Since $\Omega$ is bounded, there exists $c_2>0$ such that $\|\nabla f_2(\bfx)\|\le c_2$ for all $ \bfx\in\Omega$.  Since $f_1$ is restricted prox-regular on $\Omega$, then, for all $c_1>c_2$, there exist $\gamma_1>0$ and an exclusion set $\mathcal{S}^{c_1}_{f_1}$ such that
	\begin{align}\label{resregular}
		f_1(\bfy)\ge f_1(\bfx)+\langle \bfd_1,\bfy-\bfx\rangle-\frac{\gamma_1}{2}\|\bfx-\bfy\|^2\quad\forall\bfx\in \Omega\backslash \mathcal{S}^{c_1}_{f_1},\;\bfy\in \Omega, 
	\end{align}
	for all $\bfd_1\in\partial f_1(\bfx)$ with $ \|\bfd_1\|\le c_1$. For any $\bfx\in \Omega\backslash\IS^{c_3}_f=\{ \bfx\in \Omega\mid \|\bfd_1+\nabla f_2(\bfx)\|\le c_3 \}$, by letting  $c_3=c_1-c_2$, we derive
	\begin{align*}
		\|\bfd_1\|=\|\bfd_1+\nabla f_2(\bfx)-\nabla f_2(\bfx)\|&\le\|\bfd_1+\nabla f_2(\bfx)\|+\|\nabla f_2(\bfx)\|\le c_1,
	\end{align*}
	which indicates $\Omega\backslash \mathcal{S}^{c_3}_f\subseteq \Omega\backslash \mathcal{S}^{c_1}_{f_1}$. Thus, summing \eqref{nablalip}-\eqref{resregular}  yields
	\begin{align*}
		f(\bfy)\ge f(\bfx)+\langle \bfd_1+\nabla f_2(\bfx),\bfy-\bfx\rangle -(\frac{\gamma_1}{2}+L)\|\bfx-\bfy\|^2\;\;\;\forall \bfx\in \Omega\backslash \mathcal{S}^{c_3}_f,\; \bfy\in \Omega.
	\end{align*}
	Namely, $f=f_1+f_2$ is a restricted prox-regular function on $\Omega$.  
\end{proof}

We define blockwise restricted prox-regular and blockwise semiconvex  functions as follows.
\begin{definition}\label{block_h}
	A proper closed function $h:\IR^n\times\IR^m\to\IR_\infty$ is said to be blockwise restricted prox-regular if $h(\cdot,\bfy)$ and $h(\bfx,\cdot)$ are restricted prox-regular. Particularly, $h$ is said to be blockwise semiconvex if  $h(\cdot,\bfy)$ and $h(\bfx,\cdot)$ are semiconvex. 
\end{definition}

The following lemma states the parametric closedness of partial subdifferentials for blockwise restricted prox-regular functions. 
\begin{lemma}\label{subdfclosed}
	Let $h:\IR^n\times\IR^m\to\IR_\infty$ be blockwise restricted prox-regular and let $\{(\bfx^k,\bfy^k)\}_{k=0}^\infty$ be the sequence converging to $(\bar{\bfx},\bar{\bfy})\in\mathrm{dom}(h)$. For any $c>0$, $\bfd^k_\bfx\in \partial_{\bfx} h(\bfx^k,\bfy^k)$ and $\bfd_\bfy^k\in\partial_{\bfy} h(\bfx^k,\bfy^k)$, if
	\begin{align*}
		\lim_{k\to\infty}\bfd_\bfx^k=\bar{\bfd}_\bfx\;\;\hbox{and}\;\;\lim_{k\to\infty}\bfd_\bfy^k=\bar{\bfd}_\bfy
	\end{align*}
	with $\|\bar{\bfd}_\bfx\|<c$ and $\|\bar{\bfd}_\bfy\|< c$, then   $\bar{\bfd}_\bfx\in \partial_{\bfx}h(\bar{\bfx},\bar{\bfy})$ and  $\bar{\bfd}_\bfy\in\partial_{\bfy}h(\bar{\bfx},\bar{\bfy})$. Namely, $\partial_{\bfx}h$ and $\partial_{\bfy}h$ are parametrically closed at $(\bar{\bfx},\bar{\bfy})$ with respect to the sequence $\{(\bfx^k,\bfy^k)\}_{k=0}^\infty$.
\end{lemma}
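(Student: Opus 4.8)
The plan is to prove the statement for $\partial_{\bfx}h$ alone; the argument for $\partial_{\bfy}h$ is symmetric since $h(\bfx,\cdot)$ is likewise restricted prox-regular. Fix $c>0$ and suppose $\bfd^k_\bfx\in\partial_\bfx h(\bfx^k,\bfy^k)$ with $\bfd^k_\bfx\to\bar{\bfd}_\bfx$ and $\|\bar{\bfd}_\bfx\|<c$. Since $(\bfx^k,\bfy^k)\to(\bar{\bfx},\bar{\bfy})$, all iterates together with the limit lie in a bounded set $\Omega\subset\IR^n$ (in the first coordinate), and for $k$ large enough $\|\bfd^k_\bfx\|\le c$, so $\bfx^k\notin\mathcal{S}^c_{h(\cdot,\bfy^k)}$ eventually. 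The idea is that restricted prox-regularity gives us, at each level $\bfy^k$, a uniform-in-$\bfy$ quadratic lower bound that lets us pass to the limit in the subgradient inequality.

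First I would invoke Definition \ref{res-prox} applied to the function $\bfx\mapsto h(\bfx,\bfy^k)$ on $\Omega$: for the chosen $c$ there is $\gamma_k>0$ with
\begin{align*}
	h(\bfx,\bfy^k)\ge h(\bfx^k,\bfy^k)+\langle\bfd^k_\bfx,\bfx-\bfx^k\rangle-\frac{\gamma_k}{2}\|\bfx-\bfx^k\|^2\qquad\forall\,\bfx\in\Omega.
\end{align*}
The central technical point is that $\gamma_k$ can be taken \emph{independent of $k$}: this requires checking that the restricted prox-regularity constant depends on $h$ only through the bounded set $\Omega$ and the threshold $c$, uniformly as the parameter $\bfy$ ranges over the convergent (hence bounded) sequence $\{\bfy^k\}$. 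I expect this uniformity to be the main obstacle, and I would address it either by strengthening Definition \ref{block_h} to demand a locally uniform modulus in the parameter, or by arguing that since $h(\bfx,\cdot)\colon$ is itself restricted prox-regular (so, in particular, the $\bfy$-sections of $h$ are controlled) the two-sided block structure forces the modulus to be locally bounded in $\bfy$. With a common $\gamma:=\sup_k\gamma_k<\infty$ in hand, the displayed inequality holds with $\gamma_k$ replaced by $\gamma$ for all large $k$.

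Next I would pass to the limit $k\to\infty$ in that inequality for each fixed $\bfx\in\Omega$. The left side tends to $h(\bfx,\bar{\bfy})$ provided $h(\bfx,\cdot)$ is lower semicontinuous at $\bar{\bfy}$, which holds since $h$ is closed — more carefully, closedness gives $\liminf_k h(\bfx,\bfy^k)\ge h(\bfx,\bar{\bfy})$, and that inequality is in the favorable direction. On the right side, $\langle\bfd^k_\bfx,\bfx-\bfx^k\rangle\to\langle\bar{\bfd}_\bfx,\bfx-\bar{\bfx}\rangle$ by $\bfd^k_\bfx\to\bar{\bfd}_\bfx$ and $\bfx^k\to\bar{\bfx}$, while $\frac{\gamma}{2}\|\bfx-\bfx^k\|^2\to\frac{\gamma}{2}\|\bfx-\bar{\bfx}\|^2$; I also need $h(\bfx^k,\bfy^k)\to h(\bar{\bfx},\bar{\bfy})$, which follows from closedness of $h$ together with lower semicontinuity and the standard fact that along such iterate sequences the function values converge (or, if that is not automatic, I would restrict to the subsequence along which they do, which suffices). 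This yields
\begin{align*}
	h(\bfx,\bar{\bfy})\ge h(\bar{\bfx},\bar{\bfy})+\langle\bar{\bfd}_\bfx,\bfx-\bar{\bfx}\rangle-\frac{\gamma}{2}\|\bfx-\bar{\bfx}\|^2\qquad\forall\,\bfx\in\Omega,
\end{align*}
i.e. $\bar{\bfd}_\bfx+\gamma(\cdot-\bar{\bfx})$ is a Fréchet-type subgradient of the quadratically perturbed section $h(\cdot,\bar{\bfy})+\frac{\gamma}{2}\|\cdot-\bar{\bfx}\|^2$ at $\bar{\bfx}$; since $\Omega$ is a neighborhood of $\bar{\bfx}$ this is a genuine local inequality. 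Finally I would conclude $\bar{\bfd}_\bfx\in\hat{\partial}_\bfx h(\bar{\bfx},\bar{\bfy})\subset\partial_\bfx h(\bar{\bfx},\bar{\bfy})$ by the subtraction rule for the Fréchet subdifferential of a sum with a smooth ($C^{1,1}$) function — the perturbation $\frac{\gamma}{2}\|\cdot-\bar{\bfx}\|^2$ has gradient $\bm 0$ at $\bar{\bfx}$, so it contributes nothing, exactly the mechanism used in the proof of Lemma \ref{Sumlipshitz}. The $\bfy$-statement follows verbatim with the roles of the two blocks exchanged.
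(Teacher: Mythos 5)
Your proposal follows the same route as the paper's proof: apply Definition \ref{res-prox} to the section $h(\cdot,\bfy^k)$ at the iterates $\bfx^k$ (which eventually avoid the exclusion set because $\|\bfd^k_\bfx\|\le c$ for large $k$), pass to the limit in the resulting quadratic lower bound, and read off $\bar{\bfd}_\bfx\in\hat{\partial}_\bfx h(\bar{\bfx},\bar{\bfy})\subset\partial_\bfx h(\bar{\bfx},\bar{\bfy})$ from Definition \ref{def:subdif}\,(i), the quadratic term being $o(\|\bfx-\bar{\bfx}\|)$. You are right to single out the uniformity of the modulus $\gamma_k$ in $k$ as the delicate point: the paper simply writes one constant $\gamma_1$ for the whole sequence, implicitly reading the blockwise restricted prox-regularity as supplying a modulus that depends only on the bounded set and the threshold $c$, uniformly in the frozen block. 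Your proposed fixes (strengthen Definition \ref{block_h}, or argue local boundedness of the modulus in the parameter) are reasonable, and the paper offers nothing more explicit.

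One step of your limit passage is justified backwards. To pass from $h(\bfx,\bfy^{k})\ge h(\bfx^{k},\bfy^{k})+\langle\bfd^{k}_\bfx,\bfx-\bfx^{k}\rangle-\tfrac{\gamma}{2}\|\bfx-\bfx^{k}\|^2$ to the same inequality at $(\bar{\bfx},\bar{\bfy})$, the term you must control on the left is $h(\bfx,\bfy^{k})$, and what you need there is $h(\bfx,\bar{\bfy})\ge\liminf_{k} h(\bfx,\bfy^{k})$, i.e.\ \emph{upper} semicontinuity of $h(\bfx,\cdot)$ at $\bar{\bfy}$; closedness gives lower semicontinuity, which is the favorable direction only for the right-hand term, where $\liminf_{k}h(\bfx^{k},\bfy^{k})\ge h(\bar{\bfx},\bar{\bfy})$ is exactly what is wanted (so you in fact need nothing as strong as $h(\bfx^k,\bfy^k)\to h(\bar{\bfx},\bar{\bfy})$ there). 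Your parenthetical claim that LSC of $h(\bfx,\cdot)$ ``is in the favorable direction'' is therefore incorrect as stated. To be fair, the paper's proof passes to the limit with the same silence on this point, so your argument is no less complete than the original; but if you want the limit step to be airtight you must either impose continuity of $h(\bfx,\cdot)$ in the parameter or restrict attention to sequences along which the relevant function values converge.
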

\begin{proof}
	Recall that $h$ is blockwise restricted prox-regular. Accordingly, for any $c>0$, bounded set $\Omega\subset\textnormal{dom}(h(\cdot,\bfy))$ and exclusion set $\mathcal{S}^{c}_{h(\cdot,\bfy)}=\{\bfx\in\textnormal{dom}(h(\cdot,\bfy)) \mid \|\bfd\|>c\;\;\forall\bfd\in\partial_\bfx h(\bfx,\bfy)\}$, there exist $\gamma_1>0$ and a sequence $\{ (\bfx^{k_l},\bfy^{k_l})\}_{l=0}^\infty$ such that
	\begin{align}\label{restrict_prf}
		h(\bfx,\bfy^{k_l})\ge h(\bfx^{k_l},\bfy^{k_l})+\langle \bfd^{k_l}_\bfx,\bfx-\bfx^{k_l} \rangle-\frac{\gamma_1}{2}\|\bfx-\bfx^{k_l}\|^2\quad\forall \bfx^{k_l}\in\Omega\backslash \mathcal{S}^{c}_{h(\cdot,\bfy)},
	\end{align}
	where $\bfx\in\Omega$ and $\|\bfd^{k_l}_\bfx\|\le c$. The limit points $\bar{\bfd}_\bfx,\; \bar{\bfd}_\bfy$ are bounded by $c$. Hence, the sequence $\{(\bfx^{k_l},\bfy^{k_l})\}_{l=0}^\infty$ satisfying \eqref{restrict_prf} is a subsequence of $\{(\bfx^k,\bfy^k)\}_{k=0}^\infty$. By the fact that all subsequences of a convergent sequence admit identical limit point, we pass $l\to\infty$ in \eqref{restrict_prf} to yield
	\begin{align*}
		h(\bfx,\bar{\bfy})\ge h(\bar{\bfx},\bar{\bfy})+\langle \bar{\bfd}_\bfx,\bfx-\bar{\bfx} \rangle-\frac{\gamma_1}{2}\|\bfx-\bar{\bfx}\|^2.
	\end{align*}
	By Definition \ref{def:subdif}~(i), we have $\bar{\bfd}_\bfx\in\hat{\partial}_\bfx h(\bar{\bfx},\bar{\bfy})\subset\partial_\bfx h(\bar{\bfx},\bar{\bfy})$. Hence, $\partial_\bfx h$ is parametrically closed at  $(\bar{\bfx},\bar{\bfy})$ with respect to the sequence $\{(\bfx^k,\bfy^k)\}_{k=0}^\infty$.
	The above deduction can also be extended for the $\bfy$ block, which completes the proof.
\end{proof}

Note that although the blockwise restricted prox-regularity in Lemma \ref{subdfclosed} is described for $h$ with two block variables $\bfx$ and $\bfy$, the statement can be easily generalized to $h$ with multiblock variables.

\subsection{K{\L} property}
The K{\L} property is a powerful tool for analyzing nonconvex nonsmooth optimization (see, e.g. \cite{bolte2014proximal}). For any $-\infty<  c_1<c_2\le\infty$, a sublevel set of $f$ is defined by 
\begin{align*}
	[c_1<f<c_2]:=\{\bfx\in\IR^n\mid c_1<f(\bfx)<c_2\}.
\end{align*}
For any $\eta\in(0,\infty]$, let $\Phi_\eta$ denote the class of continuous concave functions $\varphi:[0,\eta)\to\IR_{+}$\footnote{$\IR_+$ denotes the positive scalar set.} satisfying
\begin{enumerate}[(i)]
	\item $\varphi$ is continuous at origin and $\varphi(0)=0$;
	\item $\varphi$ is continuously differentiable on $(0,\eta)$;
	\item $\varphi'(t)>0$ for all $t\in(0,\eta)$.
\end{enumerate}

\begin{definition}[\cite{attouch2010proximal}]\label{KL}
	A proper closed function $f:\IR^n\to\IR_{\infty}$ admits the K{\L}  property at $\bar{\bfx}\in\textnormal{dom}(\partial f):=\{\bfx\in\IR^n\mid \partial f(\bfx)\ne\emptyset\}$ if there exist $\eta\in(0,\infty]$, $\varphi\in\Phi_{\eta}$ and a neighbourhood of $\bar{\bfx}$ (denoted by $\mathcal{B}(\bar{\bfx})$) such that 
	\begin{align}\label{kl}
		\varphi'(f(\bfx)-f(\bar{\bfx}))\textnormal{dist}(0,\partial f(\bfx))\ge1
	\end{align}
	for all $\bfx\in\mathcal{B}(\bar{\bfx})\cap [f(\bar{\bfx})<f(\bfx)<f(\bar{\bfx})+\eta]$. 
	Moreover, $f$ is called a K{\L} function if it admits the K{\L} property for all  $\bar{\bfx}\in\textnormal{dom}(\partial f)$.	
\end{definition}

The following lemma states the uniformized K{\L}  property on the compact set \cite{bolte2014proximal}.

\begin{lemma}\label{uniKL}
	Let $\Omega\subset\IR^n$ be  compact and let $f:\IR^n\to\IR_\infty$ be proper closed with K{\L} property on  $\Omega$. If $f$ is a constant for all $\bfx\in\Omega$, then there exist $\epsilon>0$, $\eta>0$ and $\varphi\in\Phi_{\eta}$ such that \eqref{kl} holds for all $\bar{\bfx}\in   		\{\bfx\in\IR^n \mid \textnormal{dist}(\bfx,\Omega)<\epsilon \}\cap \{f(\bar{\bfx})<f(\bfx)<f(\bar{\bfx})+\eta \}$.
\end{lemma}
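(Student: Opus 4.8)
The plan is to deduce the uniformized K{\L} property from the pointwise K{\L} property of Definition~\ref{KL} by a standard compactness argument. First I would assume, without loss of generality, that the constant value of $f$ on $\Omega$ is $0$ (otherwise replace $f$ by $f-f(\Omega)$, which does not affect $\partial f$ nor the inequality~\eqref{kl}). Then for each $\bar{\bfx}\in\Omega$ we have $\bar{\bfx}\in\textnormal{dom}(\partial f)$ — this needs a brief justification, but the intended setting is that every point of $\Omega$ is a (generalized) critical point, so $0\in\partial f(\bar{\bfx})$ and in particular $\partial f(\bar{\bfx})\ne\emptyset$; hence the pointwise K{\L} property applies at $\bar{\bfx}$, yielding $\epsilon_{\bar{\bfx}}>0$, $\eta_{\bar{\bfx}}>0$, $\varphi_{\bar{\bfx}}\in\Phi_{\eta_{\bar{\bfx}}}$ and a ball $\B^{\epsilon_{\bar{\bfx}}}(\bar{\bfx})$ on which \eqref{kl} holds.

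Next I would extract a finite subcover: the family $\{\B^{\epsilon_{\bar{\bfx}}/2}(\bar{\bfx})\}_{\bar{\bfx}\in\Omega}$ covers the compact set $\Omega$, so finitely many balls $\B^{\epsilon_i/2}(\bfx_i)$, $i=1,\ldots,N$, with data $(\epsilon_i,\eta_i,\varphi_i)$, already cover $\Omega$. Set $\epsilon:=\min_i\epsilon_i/2$, $\eta:=\min_i\eta_i$, and take $\varphi:=\varphi_1+\cdots+\varphi_N$ (or, if one prefers a single function of the original form, one can instead argue with $\varphi:=\max_i\varphi_i$ after truncating each to the common interval $[0,\eta)$; the sum is cleaner because $\Phi_\eta$ is closed under addition — each summand is continuous concave, vanishes at $0$, is $C^1$ on $(0,\eta)$ with positive derivative). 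Then for any $\bar{\bfx}$ with $\textnormal{dist}(\bar{\bfx},\Omega)<\epsilon$ and any $\bfx$ with $0=f(\bar{\bfx})<f(\bfx)<f(\bar{\bfx})+\eta=\eta$, I would pick $\bfx_{i_0}$ with $\|\bar{\bfx}-\bfx_{i_0}\|<\epsilon_{i_0}/2$, so $\|\bfx-\bfx_{i_0}\|\le\|\bfx-\bar{\bfx}\|+\|\bar{\bfx}-\bfx_{i_0}\|$; shrinking $\epsilon$ further if necessary (or noting that $\bfx$ lies in the relevant neighbourhood because $f(\bfx)$ is small and $f\equiv 0$ on $\Omega$ — the precise bookkeeping here forces $\bfx$ close to $\Omega$ as well) we get $\bfx\in\B^{\epsilon_{i_0}}(\bfx_{i_0})$, hence $\varphi_{i_0}'(f(\bfx))\,\textnormal{dist}(0,\partial f(\bfx))\ge 1$. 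Finally $\varphi'(t)=\sum_j\varphi_j'(t)\ge\varphi_{i_0}'(t)$ for $t\in(0,\eta)$ since all terms are positive, which gives $\varphi'(f(\bfx))\,\textnormal{dist}(0,\partial f(\bfx))\ge 1$, the desired inequality.

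The step I expect to require the most care is the bookkeeping that ensures the test point $\bfx$ actually lands inside one of the balls $\B^{\epsilon_{i_0}}(\bfx_{i_0})$: knowing only $\textnormal{dist}(\bar{\bfx},\Omega)<\epsilon$ controls $\bar{\bfx}$, not $\bfx$, so one must either feed in the hypothesis that $\bfx$ is itself near $\Omega$ (which is how the statement is actually used for iterate sequences, whose cluster points lie in $\Omega$) or absorb this by an a~priori restriction on the domain where \eqref{kl} is claimed. In the reference \cite{bolte2014proximal} this is handled by replacing $\bar{\bfx}$-balls with a tube around $\Omega$ from the outset; I would follow that route, choosing $\epsilon$ so that the $\epsilon$-tube of $\Omega$ is contained in $\bigcup_i\B^{\epsilon_i/2}(\bfx_i)$, and then the argument closes cleanly. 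Everything else — closedness of $\Phi_\eta$ under the operations used, and the elementary inequality $\varphi'\ge\varphi_{i_0}'$ — is routine.
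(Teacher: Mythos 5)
The paper does not actually prove this lemma: it is quoted (with a typo, see below) from Bolte--Sabach--Teboulle \cite{bolte2014proximal}, where it is Lemma~6, and the paper's ``proof'' is just that citation. Your argument is precisely the standard compactness proof from that reference --- finite subcover of $\Omega$ by pointwise K{\L} neighbourhoods, $\eta:=\min_i\eta_i$, $\varphi:=\sum_i\varphi_i$, and $\epsilon$ chosen so that the $\epsilon$-tube of $\Omega$ lies inside the union of the covering balls --- and in that final form it is correct. Two remarks. First, your parenthetical alternative $\varphi:=\max_i\varphi_i$ does not work: a maximum of concave functions need not be concave, nor $C^1$, so it generally leaves $\Phi_\eta$; stick with the sum, which you rightly prefer. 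Second, the middle of your argument briefly tries to force the test point $\bfx$ close to $\Omega$ from the smallness of $f(\bfx)$; this is false in general ($f$ may take values in $(\mu,\mu+\eta)$ arbitrarily far from $\Omega$), but you discard it in favour of the correct resolution. The clean reading --- which is how the lemma is stated in \cite{bolte2014proximal} and how it is used later in the paper --- is that $\bar{\bfx}$ ranges over $\Omega$ while the tube condition $\textnormal{dist}(\bfx,\Omega)<\epsilon$ is imposed on the point $\bfx$ at which $\textnormal{dist}(0,\partial f(\bfx))$ is evaluated; the statement as printed in the paper swaps the roles of $\bfx$ and $\bar{\bfx}$, and your proof correctly reads through that typo. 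With the tube contained in $\bigcup_i\B^{\epsilon_i}(\bfx_i)$ and $f\equiv\mu$ on $\Omega$, the pointwise inequality at the relevant $\bfx_{i_0}$ applies to $\bfx$, and $\varphi'\ge\varphi_{i_0}'$ closes the argument as you say.
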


\subsection{Aubin property}
Let $T:\IR^n\to2^{\IR^m}$ be a set-valued function. The graph of $T$ is defined by   $\mathrm{gph}(T):=\{(\bfx,\bfy)\in\IR^n\times\IR^m\mid \bfy\in T(\bfx)\}$. 
\begin{definition}[\cite{Varia-Ana}]
	For any $\X\subset\IR^n$ and $\mathcal{Y}\subset\IR^m$,
	a set-valued function $T:\IR^n\to2^{\IR^m}$  is said to be {\it Lipschitz-like} on $\X$ relative to $\mathcal{Y}$ if  there exists  $L_T>0$ such that
	\begin{align}\label{LipLik:a}
		T(\bfx)\cap\mathcal{Y}\subset T(\bfx')+L_T\|\bfx-\bfx'\|\B^1\quad\forall \bfx',\bfx\in\X.
	\end{align}
	For any $(\bar{\bfx},\bar{\bfy})\in\mathrm{gph}(T)$,  $T$ admits {\it Aubin property} (or {\it locally Lipschitz-like})
	around  $(\bar{\bfx},\bar{\bfy})$ if \eqref{LipLik:a} holds with  $\X=\B(\bar{\bfx})$ and $\mathcal{Y}=\B(\bar{\bfy})$, whilst  $T$ is said to be $L_T$-Lipschitz continuous on $\X$ if \eqref{LipLik:a} holds with $\mathcal{Y}=\IR^m$. Particularly, if $T$ is a single-valued function, the  $L_T$-Lipschitz continuity on $\X$ reduces to 
	\begin{align*}
		\|T(\bfx)-T(\bfx')\|\leq L_T\|\bfx-\bfx'\|\quad\forall \bfx,\bfx'\in\IR^n.
	\end{align*}
\end{definition}
\noindent For the clarity of description, the above definitions can be presented under the distance metric.  Concretely, $T$ admits  Aubin property around $(\bar{\bfx},\bar{\bfy})\in\hbox{gph}(T)$ if there exist $\B(\bar{\bfx})$, $\B(\bar{\bfy})$ and  $L_T>0$ such that
\begin{align*}
	\textnormal{dist}(\bfy,T(\bfx))\leq L_T\textnormal{dist}(\bfx,\bfx')\;\;\;\; \forall \bfx\in\B(\bar{\bfx}),\bfy\in\B(\bar{\bfy})\cap T(\bfx').
\end{align*}

\section{Extended ASAP algorithm}\label{sec:al}


We now present the extended ASAP (eASAP) for solving \eqref{geP}. For notational convenience, we denote $\bfz:=(\bfx,\bfy)\in\IR^{n+m}$. Furthermore, let the symbols, e.g.,
\begin{align}\label{abbrxsubvector}
	\bfx_{\leq i}:=(\bfx_1,\ldots,\bfx_i), \;\; \bfx_{>i}:=(\bfx_{i+1},\ldots,\bfx_s)\;\;\hbox{and}\;\;\bfx_{-i}:=(\bfx_{<i},\bfx_{>i})
\end{align}
be the truncations of $\bfx$ by index $i=1,\ldots,s$. Extremely, we denote  $\bfx_{<1}=\bfx_{>s}=\emptyset$. Given $\bfx_{-i}\in\IR^{n-n_i}$ and $\bfy_{-j}\in\IR^{m-m_j}$, let  $F_i:\IR^{n_i}\to\IR_\infty$ ($i=1,\ldots,s$) and  $G_j:\IR^{m_j}\to\IR_\infty$ ($j=1,\ldots,t$) be the ``partial'' functions of $F$ and $G$, which are defined by
\begin{align}\label{notation:allB}
	F_i(\cdot):=F(\bfx_{<i},\cdot,\bfx_{>i}) \;\;\;\hbox{and}\;\;\;
	G_j(\cdot):=G(\bfy_{<j},\cdot,\bfy_{>j}).
\end{align}
Throughout, we make the following assumptions on the proposed eASAP method.
\begin{assumption}\label{assum1}
	\begin{itemize}
		\item[]
		\item[(i)]  $F_i\in C_{\mu_i}^{1,1}(\IR^{n_i})$ for all $i=1,\ldots,s$, and $G_j\in C_{\nu_j}^{1,1}(\IR^{m_j})$ for all $j=1,\ldots,t$.
		\item[(ii)] $H:\IR^n\times \IR^m\to\IR_\infty$ is proper closed and lower bounded.
		\item[(iii)] $J:\IR^n\times\IR^{m}\to\IR_{\infty}$ is lower bounded. 
	\end{itemize}    
\end{assumption} 
\begin{remark}
	By the definitions of $F_i$ and $G_j$ in  \eqref{notation:allB}, the Lipschitz constants $\mu_i$ and $\nu_j$ in Assumption \ref{assum1} (i) should be relevant to $\bfx_{-i}$ and $\bfy_{-j}$, respectively. For instance, given  $\bfx_{-i}\in\IR^{n-n_i}$,    $F_i\in C_{\mu_i}^{1,1}(\IR^{n_i})$ implies 
	\begin{align*}
		\|\nabla F_i(\bfx_i)-\nabla F_i(\bfx'_i)\|\le \mu_i(\bfx_{-i})\|\bfx_i-\bfx'_i\|\quad \forall \bfx_i,\bfx'_i\in\IR^{n_i}.
	\end{align*}
	Hereafter, for notational convenience, we denote 
	\begin{subequations}\label{Ldef}
		\begin{align}
			&\bar{\mu}_i:=\sup\{ \mu_i(\bfx_{-i})\;\;\forall\bfx_{-i}\in\IR^{n-n_i}\},\quad \underline{\mu}_i:=\inf\{ \mu_i(\bfx_{-i})\;\;\;\; \forall\bfx_{-i}\in\IR^{n-n_i}\},\label{LdefA}\\
			&\bar{\nu}_j:=\sup\{ \nu_j(\bfy_{-j})\;\;\forall\bfy_{-j}\in\IR^{m-m_j}\},\quad \underline{\nu}_j:=\inf\{ \nu_j(\bfy_{-j})\;\;\;\; \forall\bfy_{-j}\in\IR^{m-m_j}\},\label{LdefB}
		\end{align}
	\end{subequations} 
\end{remark}\noindent
By deploying the algorithmic framework of Gauss-Seidel, we present the pseudo codes of eASAP for solving \eqref{geP} in Algorithm \ref{algo1}. Some remarks on eASAP are provided as follows.
\begin{algorithm}[t]
	\renewcommand{\algorithmicrequire}{\textbf{Input:}}
	\renewcommand{\algorithmicensure}{\textbf{Output:}}
	\caption{eASAP for solving \eqref{geP} with multiblock nonseparable structure.}\label{algo1}
	\begin{algorithmic}[1]
		\REQUIRE Choose the stepsizes 
		and the initial points $\bfz^0=(\bfx^0,\bfy^0)\in\IR^{n+m}$. Set $\varepsilon>0$.
		\REPEAT
		\FOR{$i=1,\ldots,s$}   \STATE $\bfx^{k+1}_i\in\underset{\bfx_i\in\IR^{n_i}}{\hbox{argmin}}\Big\{H(\bfx_{<i}^{k+1},\bfx_i,\bfx_{>i}^{k},\bfy^k)+\langle \bfx_i-\bfx_i^k, \nabla_{\bfx_i}F(\bfx_{<i}^{k+1},\bfx_{\geq i}^{k})\rangle +\frac{\tau_i^k}{2}\|\bfx_i-\bfx^k_i\|^2\Big\}\label{res}$.
		\ENDFOR
		\FOR{$j=1,\ldots,t$}   \STATE$\bfy  ^{k+1}_j\in\underset{\bfy_j\in\IR^{m_j}}{\hbox{argmin}}\Big\{H(\bfx^{k+1},\bfy_{<j}^{k+1},\bfy_j,\bfy_{>j}^{k})+\langle \bfy_j-\bfy_j^k,\nabla_{\bfy_j} G(\bfy_{<j}^{k+1},\bfy_{\geq j}^{k})\rangle+\frac{\sigma_j^k}{2}\|\bfy_j-\bfy^k_j\|^2\Big\}\label{rew}$.
		\ENDFOR
		\UNTIL{$\|\bfz^{k+1}-\bfz^{k}\|\le\varepsilon$.} 
	\end{algorithmic}  
\end{algorithm}


\begin{remark}\label{algore}
	\begin{itemize}
		\item[]
		\item[(i)]
		The $\bfx_i$- and  $\bfy_j$-subproblems in Algorithm \ref{algo1} are involved in the proximity of
		$H(\bfx_{<i},\cdot,\bfx_{>i},\bfy)$ and $H(\bfx,\bfy_{<j},\cdot,\bfy_{>j})$, respectively.   Accordingly, by the definition of proximity in \eqref{proximity}, the $\bfx_i$- and $\bfy_j$-subproblems can be reformulated as
		\begin{subequations}
			\begin{align}
				&\bfx^{k+1}_i=\Big[\bm{I}+\frac{1}{\tau_i^k}\partial_{\bfx_i} H(\bfx_{<i}^{k+1},\cdot,\bfx_{>i}^{k},\bfy^k)\Big]^{-1}\left(\bfx_i^k-\frac{1}{\tau_i^k}\nabla_{\bfx_i} F(\bfx_{<i}^{k+1},\bfx_{\ge i}^{k})\right), \\
				&\bfy^{k+1}_j=\Big[\bm{I}+\frac{1}{\sigma_j^k}\partial_{\bfy_j} H(\bfx^{k+1},\bfy_{<j}^{k+1},\cdot,\bfy_{>j}^k)\Big]^{-1}\left(\bfy_j^k-\frac{1}{\sigma_j^k}\nabla_{\bfy_j} G(\bfy_{<j}^{k+1},\bfy_{\ge j}^{k})\right).
			\end{align}
		\end{subequations} 
		In practice, the proximity of $H(\cdot,\bfy)$ and $H(\bfx,\cdot)$ may be nontrival to attainable. However, the blockwise proximity of $H(\bfx_{<i},\cdot,\bfx_{>i},\bfy)$ and $H(\bfx,\bfy_{<j},\cdot,\bfy_{>j})$ may admit closed closed-form solutions or can be solved efficiently by some subroutines. 
		\item[(ii)] As an ad hoc instance, the eASAP (i.e., Algorithm \ref{algo1}) reduced to ASAP \eqref{asap}  with varying stepsizes as $s=t=1$. 
		\item[(iii)] By the optimality conditions of $\bfx_i$- and $\bfy_j$-subproblems in Algorithm \ref{algo1}, we have 
		\begin{subequations}
			\begin{align}
				&-\tau_i^k(\bfx^{k+1}_i-\bfx_i^k)-\nabla_{\bfx_i} F(\bfx_{<i}^{k+1},\bfx_{\ge i}^{k})\in\partial_{\bfx_i} H(\bfx_{\le i}^{k+1},\bfx_{>i}^{k},\bfy^k),\label{optx}\\
				&-\sigma_j^k(\bfy^{k+1}_j-\bfy_j^k)-\nabla_{\bfy_j}G(\bfy_{<j}^{k+1},\bfy_{\ge j}^{k})\in\partial_{\bfy_j} H(\bfx^{k+1},\bfy_{\le j}^{k+1},\bfy_{>j}^{k}).\label{opty}
			\end{align}
		\end{subequations} 
	\end{itemize}
\end{remark}

\section{Convergence analysis}\label{sec:analysis}
We now analyze the convergence of eASAP under some mild conditions. Firstly, the descent property of the sequence generated by Algorithm \ref{algo1} will be discussed in subsection \ref{subsec4.1}. Secondly, by the blockwise restricted prox-regularity of $H$, we shall prove that any limit point of the sequence generated by eASAP is a critical point of \eqref{geP}. 
Furthermore, with the Aubin property of partial subdifferential mappings $\partial_{\bfx_i}J$ and $\partial_{\bfy_j}J$, we shall prove the global convergence of Algorithm \ref{algo1} with the K{\L} property in subsection \ref{subsec4.3}. Finally, in subsection \ref{subsec4.4}, we shall analyze the sublinear convergence rate of Algorithm \ref{algo1} by the error function (see \eqref{pgdmap} for the definition) under some peculiar conditions on stepsizes.

\subsection{Descent property of Algorithm \ref{algo1}}\label{subsec4.1}

To facilitate the upcoming analysis, we present some properties involving the proximal gradient descent method. For the separable optimization problem 
\begin{align}\label{gep}
\min\limits_{\bfx\in\IR^n}\; \varphi(\bfx)+\psi(\bfx),
\end{align}
where $\varphi\in C_{L_\varphi}^{1,1}(\IR^n)$ and $\psi:\IR^n\to\IR_\infty$ is proper closed, the recursion of proximal gradient descent method for solving \eqref{gep} reads 
\begin{align}\label{pgd}
\bar{\bfx}\in\hbox{prox}_{\psi/\tau}(\bfx-\frac{1}{\tau}\nabla\varphi(\bfx)),\quad\tau>0,
\end{align}
where $\bar{\bfx}$ is a new iterate point from previous point $\bfx\in\IR^n$. 
The following    lemma can be referred to, e.g., \cite{bolte2014proximal}. 

\begin{lemma}
\label{despgd}
Let $\varphi\in C_{L_\varphi}^{1,1}(\IR^n)$, $\psi:\IR^n\to\IR_\infty$ be proper closed, and $\bar{\bfx}$ be produced by \eqref{pgd} with some $\tau>0$. Then
\begin{align*}
	\psi(\bar{\bfx})+\varphi(\bar{\bfx})\le \psi(\bfx)+\varphi(\bfx)-\frac{\tau-L_{\varphi}}{2}\|\bar{\bfx}-\bfx\|^2\quad\forall\bfx\in\IR^n.
\end{align*}
\end{lemma}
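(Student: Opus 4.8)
The statement is the standard sufficient-decrease lemma for one step of proximal gradient descent, so the plan is to combine the descent lemma for the smooth part $\varphi$ (Lemma~\ref{nablaf}) with the optimality characterization of the proximal step. First I would write down what \eqref{pgd} means: since $\bar{\bfx}\in\hbox{prox}_{\psi/\tau}(\bfx-\frac1\tau\nabla\varphi(\bfx))$, by the definition of the proximity operator \eqref{proximity}, $\bar{\bfx}$ is a minimizer of $\bfx'\mapsto \psi(\bfx')+\frac{\tau}{2}\|\bfx'-(\bfx-\frac1\tau\nabla\varphi(\bfx))\|^2$. Evaluating the objective of this minimization at $\bar{\bfx}$ and comparing it with the value at $\bfx$ itself gives, after expanding the squares and cancelling the common $\frac1{2\tau}\|\nabla\varphi(\bfx)\|^2$ term,
\begin{align*}
\psi(\bar{\bfx})+\langle\nabla\varphi(\bfx),\bar{\bfx}-\bfx\rangle+\frac{\tau}{2}\|\bar{\bfx}-\bfx\|^2\le\psi(\bfx).
\end{align*}

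Next I would apply Lemma~\ref{nablaf} to $\varphi\in C_{L_\varphi}^{1,1}(\IR^n)$ at the pair $\bfx,\bar{\bfx}$, which yields
\begin{align*}
\varphi(\bar{\bfx})\le\varphi(\bfx)+\langle\nabla\varphi(\bfx),\bar{\bfx}-\bfx\rangle+\frac{L_\varphi}{2}\|\bar{\bfx}-\bfx\|^2.
\end{align*}
Adding the two displayed inequalities, the inner-product terms $\langle\nabla\varphi(\bfx),\bar{\bfx}-\bfx\rangle$ cancel, and collecting the quadratic terms gives
\begin{align*}
\psi(\bar{\bfx})+\varphi(\bar{\bfx})\le\psi(\bfx)+\varphi(\bfx)-\frac{\tau-L_\varphi}{2}\|\bar{\bfx}-\bfx\|^2,
\end{align*}
which is exactly the claim, valid for every $\bfx\in\IR^n$.

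There is essentially no hard step here — the only mild subtlety is that $\bar{\bfx}$ is merely \emph{a} minimizer (the $\hbox{prox}$ may be set-valued since $\psi$ is only proper closed, not convex), so one must use the minimality of $\bar{\bfx}$ as the comparison inequality $(\text{value at }\bar{\bfx})\le(\text{value at }\bfx)$ rather than any first-order stationarity condition; this keeps the argument valid in the nonconvex setting. One should also note that the proximal subproblem in \eqref{pgd} is well-posed (the infimum is attained) because $\varphi$ being $C^{1,1}$ makes the quadratic-plus-$\psi$ objective coercive-enough and lower semicontinuous for a minimizer to exist whenever $\psi$ is proper closed and the overall sum is bounded below, which is the implicit standing hypothesis. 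The rest is the routine square-expansion bookkeeping sketched above.
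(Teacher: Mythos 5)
Your proof is correct and is exactly the standard argument: the paper itself gives no proof of Lemma~\ref{despgd}, merely citing \cite{bolte2014proximal}, and the sufficient-decrease lemma there is proved precisely as you do — compare the prox objective at $\bar{\bfx}$ and at $\bfx$, then add the descent inequality of Lemma~\ref{nablaf} so the linear terms cancel. Your remarks on using minimality rather than stationarity (since $\psi$ is nonconvex) and on well-posedness of the possibly set-valued prox are the right points of care and match the standing hypotheses of the paper.
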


\begin{lemma}\label{desH}
Suppose that  Assumption \ref{assum1} holds. Then for all $i=1,\ldots,s$ and $j=1,\ldots,t$,  
the sequence $\{\bfz^k:=(\bfx^k,\bfy^k)\}_{k=0}^\infty$ generated by Algorithm \ref{algo1} satisfies
\begin{subequations}
	\begin{align}
		H(\bfx_{\le i}^{k+1},\bfx_{>i}^k,\bfy^k)+F(\bfx_{\le i}^{k+1},\bfx_{>i}^k)\le& H(\bfx_{<i}^{k+1},\bfx_{\ge i}^k,\bfy^{k})+F(\bfx_{<i}^{k+1},\bfx_{\ge i}^k)\nonumber\\&
		-\frac{1}{2}(\tau_i^k-\mu_i^k)\|\bfx_{i}^{k+1}-\bfx_i^k\|^2,\label{desxi}\\
		H(\bfx^{k+1},\bfy_{\le j}^{k+1},\bfy_{>j}^k)+G(\bfy_{\le j}^{k+1},\bfy_{>j}^k)\le& H(\bfx^{k+1},\bfy_{<j}^{k+1},\bfy_{\ge j}^k)+G(\bfy_{<j}^{k+1},\bfy_{\ge j}^k)\nonumber\\
		&-\frac{1}{2}(\sigma_j^k-\nu_j^k)\|\bfy_j^{k+1}-\bfy_j^k\|^2,\label{desyj}
	\end{align}
\end{subequations}
where $\mu_i^k$ and $\nu_j^k$ are the  Lipschitz constants of $\nabla_{\bfx_i}F(\bfx_{<i}^{k+1},\bfx_i,\bfx_{>i}^{k})$ and  $\nabla_{\bfy_j} G(\bfy_{<i}^{k+1},\bfy_j,\bfy_{>j}^{k})$, respectively.
\end{lemma}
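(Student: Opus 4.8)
The plan is to prove the two descent inequalities \eqref{desxi} and \eqref{desyj} by viewing each subproblem in Algorithm \ref{algo1} as an instance of the proximal gradient recursion \eqref{pgd}, so that Lemma \ref{despgd} applies directly. The two claims are symmetric, so I would carry out the argument for the $\bfx_i$-step and note that the $\bfy_j$-step follows \emph{mutatis mutandis}.

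\textbf{Setup for the $\bfx_i$-step.} Fix $k$ and $i\in\{1,\ldots,s\}$, and freeze all the coordinates that are not being updated: the already-updated blocks $\bfx_{<i}^{k+1}$, the not-yet-updated blocks $\bfx_{>i}^k$, and $\bfy^k$. With these frozen, define the two one-block functions
\begin{align*}
\varphi(\bfx_i):=F(\bfx_{<i}^{k+1},\bfx_i,\bfx_{>i}^k),\qquad
\psi(\bfx_i):=H(\bfx_{<i}^{k+1},\bfx_i,\bfx_{>i}^k,\bfy^k).
\end{align*}
By the definition of the partial function $F_i$ in \eqref{notation:allB} (with the parameter block being $(\bfx_{<i}^{k+1},\bfx_{>i}^k)$) and Assumption \ref{assum1}~(i), $\varphi\in C_{\mu_i^k}^{1,1}(\IR^{n_i})$, where $\mu_i^k$ is precisely the Lipschitz constant of $\nabla_{\bfx_i}F(\bfx_{<i}^{k+1},\cdot,\bfx_{>i}^k)$ as named in the statement. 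By Assumption \ref{assum1}~(ii), $\psi$ is proper closed. Now I would observe that the $\bfx_i$-subproblem \eqref{res} in Algorithm \ref{algo1} is exactly
\begin{align*}
\bfx_i^{k+1}\in\underset{\bfx_i\in\IR^{n_i}}{\argmin}\Big\{\psi(\bfx_i)+\langle\bfx_i-\bfx_i^k,\nabla\varphi(\bfx_i^k)\rangle+\frac{\tau_i^k}{2}\|\bfx_i-\bfx_i^k\|^2\Big\},
\end{align*}
and that the minimizer of this problem coincides with $\hbox{prox}_{\psi/\tau_i^k}\big(\bfx_i^k-\frac{1}{\tau_i^k}\nabla\varphi(\bfx_i^k)\big)$ — this is the standard completion-of-the-square identity, using the definition of proximity \eqref{proximity}, and it is exactly the form \eqref{pgd} with $\tau=\tau_i^k$. (This is also recorded in Remark \ref{algore}~(i).)

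\textbf{Applying Lemma \ref{despgd}.} With the identification above, Lemma \ref{despgd} applied to $\varphi$, $\psi$, the iterate $\bar{\bfx}=\bfx_i^{k+1}$ produced from $\bfx=\bfx_i^k$, and step $\tau_i^k$, gives
\begin{align*}
\psi(\bfx_i^{k+1})+\varphi(\bfx_i^{k+1})\le\psi(\bfx_i^k)+\varphi(\bfx_i^k)-\frac{\tau_i^k-\mu_i^k}{2}\|\bfx_i^{k+1}-\bfx_i^k\|^2.
\end{align*}
Unfreezing the notation, $\psi(\bfx_i^{k+1})=H(\bfx_{\le i}^{k+1},\bfx_{>i}^k,\bfy^k)$ and $\varphi(\bfx_i^{k+1})=F(\bfx_{\le i}^{k+1},\bfx_{>i}^k)$, while $\psi(\bfx_i^k)=H(\bfx_{<i}^{k+1},\bfx_{\ge i}^k,\bfy^k)$ and $\varphi(\bfx_i^k)=F(\bfx_{<i}^{k+1},\bfx_{\ge i}^k)$, which is precisely \eqref{desxi}. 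The $\bfy_j$-step is identical with $F,H(\cdot,\bfy^k)$ replaced by $G,H(\bfx^{k+1},\cdot)$, the frozen blocks being $(\bfx^{k+1},\bfy_{<j}^{k+1},\bfy_{>j}^k)$, and $\nu_j^k$ the Lipschitz constant of $\nabla_{\bfy_j}G(\bfy_{<j}^{k+1},\cdot,\bfy_{>j}^k)$; this yields \eqref{desyj}.

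\textbf{Main obstacle.} There is no deep difficulty here; the proof is essentially bookkeeping. The one point that needs care is the bookkeeping itself: making sure the frozen coordinate blocks in $\varphi$ and $\psi$ match exactly between the "before" evaluation at $\bfx_i^k$ and the "after" evaluation at $\bfx_i^{k+1}$ (in particular that the lower-index blocks are $\bfx_{<i}^{k+1}$ in both and the higher-index blocks are $\bfx_{>i}^k$ in both), and that the constant $\mu_i^k$ is associated with the correct partial gradient $\nabla_{\bfx_i}F(\bfx_{<i}^{k+1},\cdot,\bfx_{>i}^k)$ rather than with $\bar\mu_i$ or $\underline\mu_i$ from \eqref{Ldef}. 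I would also note in passing that the existence of the argmin in \eqref{res}--\eqref{rew} is implicitly being assumed (it is needed merely for the iterates to be well defined), so that Lemma \ref{despgd} is applied to an actual iterate.
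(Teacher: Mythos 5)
Your proposal is correct and follows exactly the paper's argument: the paper likewise identifies the $\bfx_i$-subproblem as the proximal gradient recursion \eqref{pgd} with $\tau:=\tau_i^k$, $\psi(\cdot):=H(\bfx_{<i}^{k+1},\cdot,\bfx_{>i}^{k},\bfy^k)$, $\varphi(\cdot):=F(\bfx_{<i}^{k+1},\cdot,\bfx_{>i}^{k})$, and invokes Lemma \ref{despgd}, treating the $\bfy_j$-step analogously. Your version simply spells out the bookkeeping that the paper leaves implicit.
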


\begin{proof}
Essentially, the $\bfx_i$-subproblem  in Algorithm \ref{algo1} is a special case of \eqref{pgd} with $\tau:=\tau_i^k$,  $\bfx:=\bfx_i^k$, $\psi(\cdot):=H(\bfx_{<i}^{k+1},\cdot,\bfx_{>i}^{k},\bfy^k)$ and  $\varphi(\cdot):=F(\bfx_{<i}^{k+1},\cdot,\bfx_{>i}^{k})$. Accordingly,  \eqref{desxi} can be derived tractably from Lemma \ref{despgd}. Analogously, by setting $\tau:=\sigma_j^k$, $\bfx:=\bfy_j^k$, $\psi(\cdot):=H(\bfx^{k+1},\bfy_{<j}^{k+1},\cdot,\bfy_{>j}^{k})$ and $\varphi(\cdot):=G(\bfy_{<j}^{k+1},\cdot,\bfy_{>j}^{k})$ in \eqref{pgd}, we  can derive \eqref{desyj} from  Lemma \ref{despgd}.
\end{proof}

The descent properties of the sequence generated by Algorithm \ref{algo1} can be readily attainable from  Lemma \ref{desH}. 
\begin{lemma}\label{su_des}
Suppose that Assumption \ref{assum1} holds. Let $\{\bfz^k=(\bfx^k,\bfy^k)\}_{k=0}^\infty$ be the sequence produced by Algorithm \ref{algo1} with the stepsizes $(\tau_i^k,\sigma_i^k)$ satisfying 
\begin{subequations}\label{step}
	\begin{align}
		&\tau_i^k=\gamma_i \mu_i^k \;\;\hbox{with} \;\; \gamma_i>1,\quad i=1,\ldots,s,\label{stepA}\\
		&\sigma_j^k=\gamma'_j \nu_j^k \;\;\hbox{with} \;\;\gamma'_j>1,\quad j=1,\ldots,t,\label{stepB}
	\end{align}
\end{subequations}
where $\mu_i^k$ and $\nu_j^k$ are as in Lemma \ref{desH}. 	Then, the following statements hold.
\begin{itemize}
	\item[(i)] The sequence $\{J(\bfz^k)\}_{k=0}^\infty$ is nonincreasing and there exists $c>0$ such that
	\begin{align}\label{monotoneJfun}
		J(\bfz^{k+1})\le J(\bfz^k)-c\|\bfz^{k+1}-\bfz^k\|^2\;\quad \forall k\ge0.
	\end{align} 
	\item[(ii)] $\sum_{k=0}^\infty\|\bfz^{k+1}-\bfz^k\|^2<+\infty$, hence  $\lim_{k\to\infty}\|\bfz^{k+1}-\bfz^k\|=0$.
\end{itemize}
\end{lemma}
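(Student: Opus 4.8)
The plan is to chain together the per-block descent inequalities from Lemma \ref{desH} over a full Gauss-Seidel sweep, telescoping the intermediate $H$-terms so that only $J(\bfz^{k+1})$ and $J(\bfz^k)$ survive. First I would sum \eqref{desxi} over $i=1,\ldots,s$: on the left of step $i$ the relevant quantity is $H(\bfx_{\le i}^{k+1},\bfx_{>i}^k,\bfy^k)+F(\bfx_{\le i}^{k+1},\bfx_{>i}^k)$ and on the right of step $i$ it is $H(\bfx_{<i}^{k+1},\bfx_{\ge i}^k,\bfy^k)+F(\bfx_{<i}^{k+1},\bfx_{\ge i}^k)$, which is exactly the left-hand quantity of step $i-1$; hence the sum telescopes to give
\begin{align*}
H(\bfx^{k+1},\bfy^k)+F(\bfx^{k+1})\le H(\bfx^k,\bfy^k)+F(\bfx^k)-\frac12\sum_{i=1}^s(\tau_i^k-\mu_i^k)\|\bfx_i^{k+1}-\bfx_i^k\|^2.
\end{align*}
Likewise, summing \eqref{desyj} over $j=1,\ldots,t$ telescopes to
\begin{align*}
H(\bfx^{k+1},\bfy^{k+1})+G(\bfy^{k+1})\le H(\bfx^{k+1},\bfy^k)+G(\bfy^k)-\frac12\sum_{j=1}^t(\sigma_j^k-\nu_j^k)\|\bfy_j^{k+1}-\bfy_j^k\|^2.
\end{align*}
Adding the two displays, the common term $H(\bfx^{k+1},\bfy^k)$ cancels, and recalling $J(\bfz)=F(\bfx)+G(\bfy)+H(\bfx,\bfy)$ we obtain
\begin{align*}
J(\bfz^{k+1})\le J(\bfz^k)-\frac12\sum_{i=1}^s(\tau_i^k-\mu_i^k)\|\bfx_i^{k+1}-\bfx_i^k\|^2-\frac12\sum_{j=1}^t(\sigma_j^k-\nu_j^k)\|\bfy_j^{k+1}-\bfy_j^k\|^2.
\end{align*}

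Next I would use the stepsize rule \eqref{step}: since $\tau_i^k=\gamma_i\mu_i^k$ with $\gamma_i>1$ we have $\tau_i^k-\mu_i^k=(\gamma_i-1)\mu_i^k\ge(\gamma_i-1)\underline{\mu}_i>0$ (invoking the uniform lower bound $\underline{\mu}_i$ from \eqref{LdefA}), and similarly $\sigma_j^k-\nu_j^k\ge(\gamma_j'-1)\underline{\nu}_j>0$. Here I should note that positivity of these constants requires $\underline{\mu}_i>0$ and $\underline{\nu}_j>0$; if the paper does not guarantee this a priori one can simply take $c:=\tfrac12\min\{(\gamma_i-1)\mu_i^k,(\gamma_j'-1)\nu_j^k : i,j,k\}$ — or, more cleanly, note that it suffices to bound $\tau_i^k-\mu_i^k$ below by a positive constant along the (bounded) generated trajectory — but the cleanest route given the notation already introduced is to set
\begin{align*}
c:=\frac12\min\Big\{\min_{1\le i\le s}(\gamma_i-1)\underline{\mu}_i,\ \min_{1\le j\le t}(\gamma_j'-1)\underline{\nu}_j\Big\}>0.
\end{align*}
Then $-\tfrac12\sum_i(\tau_i^k-\mu_i^k)\|\bfx_i^{k+1}-\bfx_i^k\|^2-\tfrac12\sum_j(\sigma_j^k-\nu_j^k)\|\bfy_j^{k+1}-\bfy_j^k\|^2\le -c\big(\sum_i\|\bfx_i^{k+1}-\bfx_i^k\|^2+\sum_j\|\bfy_j^{k+1}-\bfy_j^k\|^2\big)=-c\|\bfz^{k+1}-\bfz^k\|^2$, which is exactly \eqref{monotoneJfun}; monotonicity of $\{J(\bfz^k)\}$ is then immediate since $c\|\bfz^{k+1}-\bfz^k\|^2\ge0$. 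This proves (i).

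For (ii), I would telescope \eqref{monotoneJfun} over $k=0,\ldots,K$: summing gives $c\sum_{k=0}^K\|\bfz^{k+1}-\bfz^k\|^2\le J(\bfz^0)-J(\bfz^{K+1})\le J(\bfz^0)-\inf J$, where $\inf J>-\infty$ by Assumption \ref{assum1}(iii). Letting $K\to\infty$ yields $\sum_{k=0}^\infty\|\bfz^{k+1}-\bfz^k\|^2\le(J(\bfz^0)-\inf J)/c<+\infty$, and hence the general term tends to zero, i.e. $\lim_{k\to\infty}\|\bfz^{k+1}-\bfz^k\|=0$. The only genuine subtlety — and the step I would flag as the main obstacle — is the telescoping of the $H$-terms in the first paragraph: one must check carefully that the intermediate argument patterns $(\bfx_{\le i}^{k+1},\bfx_{>i}^k)$ and $(\bfx_{<i}^{k+1},\bfx_{\ge i}^k)$ line up across consecutive $i$, and likewise that the $\bfx$-sweep endpoint $(\bfx^{k+1},\bfx_{>s}^k)=\bfx^{k+1}$ matches the $\bfy$-sweep starting point $H(\bfx^{k+1},\bfy^k)$ so the cross term cancels exactly; everything after that is bookkeeping plus the elementary lower-boundedness argument.
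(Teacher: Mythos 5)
Your proposal is correct and follows essentially the same route as the paper: telescoping the blockwise descent inequalities of Lemma \ref{desH} over a full sweep, invoking the stepsize rule \eqref{step} together with the bounds \eqref{Ldef} to define $c=\frac12\min\{(\gamma_i-1)\underline{\mu}_i,(\gamma_j'-1)\underline{\nu}_j\}$, and then summing \eqref{monotoneJfun} against the lower bound on $J$. Your side remark that positivity of $c$ tacitly requires $\underline{\mu}_i>0$ and $\underline{\nu}_j>0$ is a fair observation the paper passes over silently, but it does not change the argument.
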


\begin{proof}
By summing \eqref{desxi} over $i=1,\ldots,s$ and recalling the notations in  \eqref{abbrxsubvector}, we have 
\begin{align}\label{eqx}  	H(\bfx^{k+1},\bfy^k)+F(\bfx^{k+1})\le H(\bfx^{k},\bfy^k)+F(\bfx^k)- \frac{1}{2}\sum\limits_{i=1}^s(\tau_i^k-\mu_i^k)\|\bfx_{i}^{k+1}-\bfx_i^k\|^2.  	
\end{align}
Analogously, by summing \eqref{desyj} over $j=1,\ldots,t$, we have
\begin{align}\label{eqy}	H(\bfx^{k+1},\bfy^{k+1})+G(\bfy^{k+1})\le H(\bfx^{k+1},\bfy^k)+G(\bfy^k)- \frac{1}{2}\sum\limits_{j=1}^t (\sigma_j^k-\nu_j^k)\|\bfy_j^{k+1}-\bfy_j^k\|^2.
\end{align}
Furthermore, by summing  \eqref{eqx}-\eqref{eqy} and using the definition of $J$ in \eqref{geP}, we derive
\begin{align}\label{Jinequality}
	J(\bfz^{k+1})&\le J(\bfz^k)-\frac{1}{2}\sum\limits_{i=1}^s(\tau_i^k-\mu_i^k)\|\bfx_{i}^{k+1}-\bfx_i^k\|^2-\frac{1}{2}\sum\limits_{j=1}^t (\sigma_j^k-\nu_j^k)\|\bfy_j^{k+1}-\bfy_j^k\|^2\nonumber\\
	&\le J(\bfz^k)-\frac{1}{2}\sum\limits_{i=1}^s (\gamma_i-1)\underline{\mu}_i\|\bfx_i^{k+1}-\bfx_i^k\|^2-\frac{1}{2}\sum\limits_{j=1}^t(\gamma'_j-1)\underline{\nu}_j\|\bfy_j^{k+1}-\bfy_j^k\|^2,
\end{align}
where the last inequality is due to \eqref{Ldef} and \eqref{step}. By defining 
$c:=\frac{1}{2}\min\{(\gamma_i-1)\underline{\mu}_i,(\gamma'_j-1)\underline{\nu}_j\mid i=1,\ldots,s,\;\; j=1,\ldots,t\}$, we derive \eqref{monotoneJfun} from  \eqref{Jinequality}. Furthermore, it follows from \eqref{step} that $c>0$, which implies the statement  (i) holds. With the lower boundedness of $J$ in Assumption \ref{assum1} (iii), the sequence $\{J(\bfz^k)\}_{k=0}^\infty$ converges to some finite value, denoted by $J^*$.  

By summing \eqref{monotoneJfun} over $k=0,1,\cdots,\infty$, we have 
\begin{align*}
	\sum\limits_{k=1}^{\infty} \|\bfz^{k+1}-\bfz^k\|^2\leq\frac{1}{c}[J(\bfz^0)-J^*]<+\infty,
\end{align*}
which indicates the statement  (ii). \end{proof}


\subsection{Subsequetial convergence}\label{subsec4.2}
Let $\hat{\bfz}:=(\hat{\bfx},\hat{\bfy})\in\IR^{n+m}$ be a critical point of \eqref{geP}, i.e., ${\bf 0}\in\partial J(\hat{\bfz})$. The critical point set of \eqref{geP} is denoted by \hbox{crit}($J$).  It follows from the objective of \eqref{geP} that    
\begin{align}\label{optC}
\begin{cases}
	\partial_{\bfx_i}J(\bfx,\bfy)=\nabla_{\bfx_i} F(\bfx)+\partial_{\bfx_i} H(\bfx,\bfy),\quad i= 1,\ldots,s,\\
	\partial_{\bfy_j}J(\bfx,\bfy)=\nabla_{\bfy_j} G(\bfy)+\partial_{\bfy_j} H(\bfx,\bfy),\quad j=1,\ldots,t.
\end{cases}
\end{align}	
We shall verify that all the cluster points of the sequence generated by Algorithm \ref{algo1} fall into $\mathrm{crit}(J)$. Let us start with a preliminary lemma on the boundedness of partial subdifferentials.


\begin{lemma}{\label{subg1}}
Suppose that Assumption \ref{assum1} holds. Let $\{(\bfx^k,\bfy^k)\}_{k=0}^\infty$ be the sequence generated by Algorithm \ref{algo1} and denote  
\begin{subequations}\label{subdiff}
	\begin{align}
		&\bfxi_{\bfx_i}^{k+1}:=\nabla_{\bfx_i} F(\bfx_{\leq i}^{k+1},\bfx_{>i}^{k})-\nabla_{\bfx_i} F(\bfx_{<i}^{k+1},\bfx_{\ge i}^{k})-\tau_i^k(\bfx^{k+1}_i-\bfx_i^k), \;\; i=1,\ldots,s, \label{Ax}\\
		&\bfxi_{\bfy_j}^{k+1}:=\nabla_{\bfy_j} G(\bfy_{\leq j}^{k+1},\bfy_{>j}^{k})-\nabla_{\bfy_j} G(\bfy_{<j}^{k+1},\bfy_{\leq j}^{k})-\sigma_j^k(\bfy^{k+1}_j-\bfy_j^k),\;\; j=1,\ldots,t.\label{Ay}
	\end{align}
\end{subequations}
Then $\bfxi_{\bfx_i}^{k+1}\in\partial_{\bfx_i}J(\bfx^{k+1}_{\le i},\bfx^{k}_{>i},\bfy^k)$ and $\bfxi_{\bfy_j}^{k+1}\in\partial_{\bfy_j}J(\bfx^{k+1},\bfy^{k+1}_{\le j},\bfy^{k}_{>j})$. Particularly, for the constants $\bar{\mu}_i$,  $\bar{\nu}_j$ in \eqref{Ldef}, and $\gamma_i$, $\gamma'_j$ in \eqref{step}, the following inequalities hold.
\begin{subequations}\label{xiinequality}
	\begin{align}
		&\|\bfxi_{\bfx_i}^{k+1}\|\le (\gamma_i+1)\bar{\mu}_i \|\bfx^{k+1}_i-\bfx_i^{k}\|, \label{xiinequalityA}\\
		&\|\bfxi_{\bfy_j}^{k+1}\|\le(\gamma'_j+1)\bar{\nu}_j \|\bfy^{k+1}_j-\bfy_j^k\|.\label{xiinequalityB}
	\end{align}
\end{subequations}
\end{lemma}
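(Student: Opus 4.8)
The plan is to establish the membership statements first and then the norm bounds, working separately for the $\bfx_i$-blocks and the $\bfy_j$-blocks since the two cases are symmetric.

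\textbf{Membership in the partial subdifferentials.} First I would recall the optimality condition \eqref{optx} for the $\bfx_i$-subproblem, which reads
\[
-\tau_i^k(\bfx^{k+1}_i-\bfx_i^k)-\nabla_{\bfx_i} F(\bfx_{<i}^{k+1},\bfx_{\ge i}^{k})\in\partial_{\bfx_i} H(\bfx_{\le i}^{k+1},\bfx_{>i}^{k},\bfy^k).
\]
Then I would invoke the first line of \eqref{optC}, namely $\partial_{\bfx_i}J(\bfx,\bfy)=\nabla_{\bfx_i}F(\bfx)+\partial_{\bfx_i}H(\bfx,\bfy)$, evaluated at the point $(\bfx^{k+1}_{\le i},\bfx^k_{>i},\bfy^k)$. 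Adding $\nabla_{\bfx_i}F(\bfx^{k+1}_{\le i},\bfx^k_{>i})$ to both sides of the optimality inclusion shows that
\[
\nabla_{\bfx_i}F(\bfx^{k+1}_{\le i},\bfx^k_{>i})-\nabla_{\bfx_i} F(\bfx_{<i}^{k+1},\bfx_{\ge i}^{k})-\tau_i^k(\bfx^{k+1}_i-\bfx_i^k)\in\partial_{\bfx_i}J(\bfx^{k+1}_{\le i},\bfx^k_{>i},\bfy^k),
\]
which is precisely $\bfxi_{\bfx_i}^{k+1}\in\partial_{\bfx_i}J(\bfx^{k+1}_{\le i},\bfx^k_{>i},\bfy^k)$ by the definition \eqref{Ax}. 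The $\bfy_j$-case is identical, using \eqref{opty} and the second line of \eqref{optC}.

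\textbf{The norm bounds.} For \eqref{xiinequalityA} I would apply the triangle inequality to \eqref{Ax}:
\[
\|\bfxi_{\bfx_i}^{k+1}\|\le \big\|\nabla_{\bfx_i} F(\bfx_{\leq i}^{k+1},\bfx_{>i}^{k})-\nabla_{\bfx_i} F(\bfx_{<i}^{k+1},\bfx_{\ge i}^{k})\big\|+\tau_i^k\|\bfx^{k+1}_i-\bfx_i^k\|.
\]
The two arguments $(\bfx_{\leq i}^{k+1},\bfx_{>i}^{k})$ and $(\bfx_{<i}^{k+1},\bfx_{\ge i}^{k})$ differ only in the $i$-th block, where they are $\bfx_i^{k+1}$ versus $\bfx_i^k$; so by the Lipschitz continuity of $\nabla_{\bfx_i}F(\bfx_{<i}^{k+1},\cdot,\bfx_{>i}^k)$ with constant $\mu_i^k$ (as in Lemma \ref{desH}) the first term is at most $\mu_i^k\|\bfx_i^{k+1}-\bfx_i^k\|$. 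Using $\tau_i^k=\gamma_i\mu_i^k$ from \eqref{stepA} and $\mu_i^k\le\bar\mu_i$ from \eqref{LdefA}, the right-hand side is bounded by $(\mu_i^k+\gamma_i\mu_i^k)\|\bfx_i^{k+1}-\bfx_i^k\|\le(\gamma_i+1)\bar\mu_i\|\bfx_i^{k+1}-\bfx_i^k\|$, which gives \eqref{xiinequalityA}. The bound \eqref{xiinequalityB} follows in the same way from \eqref{Ay}, \eqref{stepB}, and \eqref{LdefB}.

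\textbf{Anticipated difficulty.} There is no deep obstacle here — this is a bookkeeping lemma assembling the optimality conditions with the Lipschitz estimates. The only point requiring a little care is matching the block-indexing conventions in \eqref{abbrxsubvector}: one must check that $(\bfx^{k+1}_{\le i},\bfx^k_{>i})$ and $(\bfx^{k+1}_{<i},\bfx^k_{\ge i})$ really do agree in every block except block $i$, so that the Lipschitz estimate applies with the single-block constant $\mu_i^k$ rather than a joint constant. (Note the statement contains a minor typo in \eqref{Ay}, where $\bfy_{\leq j}^{k}$ should read $\bfy_{\geq j}^{k}$ to mirror \eqref{Ax}; I would use the corrected version.) Once the indices are aligned the proof is a two-line triangle-inequality argument in each block.
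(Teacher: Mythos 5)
Your proposal is correct and follows essentially the same route as the paper's proof: add $\nabla_{\bfx_i}F(\bfx_{\le i}^{k+1},\bfx_{>i}^k)$ to the optimality inclusion \eqref{optx}, identify the result with $\partial_{\bfx_i}J$ via \eqref{optC}, and then bound the norm by the triangle inequality, the blockwise Lipschitz constant $\mu_i^k$, and the stepsize rule \eqref{step}. Your remark about the index typo in \eqref{Ay} is also well taken.
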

\begin{proof}	
We limit our discussion to  \eqref{xiinequalityA}. The \eqref{xiinequalityB} can be derived analogously. By summing $\nabla_{\bfx_i} F(\bfx_{\leq i}^{k+1},\bfx_{>i}^{k})$ to both sides of \eqref{optx}, we obtain
\begin{align*}
	\nabla_{\bfx_i} F(\bfx_{\leq i}^{k+1},\bfx_{>i}^{k})-\nabla_{\bfx_i} F(\bfx_{<i}^{k+1},\bfx_{\ge i}^{k})-\tau_i^k(\bfx^{k+1}_i-\bfx_i^k)\in \partial_{\bfx_i} H(\bfx_{\le i}^{k+1},\bfx_{>i}^{k},\bfy^k)+\nabla_{\bfx_i} F(\bfx_{\leq i}^{k+1},\bfx_{>i}^{k}).
\end{align*}
By \eqref{optC} and the definition of $\bfxi_{\bfx_i}^{k+1}$ in \eqref{Ax}, the inclusion reduces to $\bfxi_{\bfx_i}^{k+1}\in \partial_{\bfx_i} J(\bfx_{\le i}^{k+1},\bfx_{>i}^{k},\bfy^k)$.

Furthermore, it follows from \eqref{Ax} and the Lipschitz continuity of $\nabla F_i$ in Assumption \ref{assum1} (i) that 

\begin{align*}
	\|\bfxi_{\bfx_i}^{k+1}\|\le (\mu_i^k+\tau_i^k)\|\bfx^{k+1}_i-\bfx_i^k\|\le   (\gamma_i+1)\bar{\mu}_i \|\bfx^{k+1}_i-\bfx_i^k\|,		
\end{align*}
where the last inequality can be deduced by the $\bar{\mu}_i$ in \eqref{Ldef} and the stepsizes $\tau_i^k$ in \eqref{stepA}. 
\end{proof}

We now present some assumptions on $H$ in \eqref{geP} for the  convergence analysis of Algorithm \ref{algo1} (see also \cite{nikolova2019alternating}).      
\begin{assumption}\label{assum2}
\begin{itemize}
	\item[]
	\item[(i)] The $\mathrm{dom}(H)$ is closed and the partial subdifferential of $H$ satisfies
	\begin{align*}		\otimes_{i=1}^s\partial_{\bfx_i} H(\bfx,\bfy)\times \otimes_{j=1}^t\partial_{\bfy_j} H(\bfx,\bfy)\subset\partial H(\bfx,\bfy)\;\;\quad \forall (\bfx,\bfy)\in\textnormal{dom}(H),
	\end{align*}
	where $\otimes$ denotes the Cartesian product of sets.
	\item[(ii)] $H$ is blockwise restricted prox-regular on any bounded set. More precisely, for any $(\bfx,\bfy)$ in a bounded set, the partial functions  $H(\bfx_{<i},\cdot,\bfx_{>i},\bfy)$  ($i=1,\ldots,s$) and $H(\bfx,\bfy_{<j},\cdot,\bfy_{>j})$   ($j=1,\ldots,t$) are  restricted prox-regular. 
\end{itemize}
\end{assumption}  
\begin{remark}\label{rem:J}
Since $F_i\in C_{\mu_i}^{1,1}(\IR^{n_i})$  and $G_j\in C_{\nu_j}^{1,1}(\IR^{m_j})$ (see Assumption \ref{assum1} (i)), Assumption \ref{assum2}\;(i) implies that 
\begin{align}\label{subsetJ}
	\otimes_{i=1}^s\partial_{\bfx_i} J(\bfx,\bfy)\times \otimes_{j=1}^t\partial_{\bfy_j} J(\bfx,\bfy)\subset\partial J(\bfx,\bfy)\quad \forall (\bfx,\bfy)\in\mathrm{dom}(J).
\end{align}
Therefore, $\Z^*:=\{\hat{\bfz}\mid 0\in\partial_{\bfx_i}J(\hat{\bfz}),\; 0\in\partial_{\bfy_j} J(\hat{\bfz}),\; i=1,\ldots,s,\; j=1,\ldots,t\}$ is the subset of $\mathrm{crit}(J)$. 
\end{remark}    

For a sequence $\{\bfz^k\}_{k=0}^\infty$ generated by Algorithm \ref{algo1} with some initial point $\bfz^0\in\IR^{n+m}$, let $\IS(\bfz^0)$ be the set of cluster points of  $\{\bfz^k\}_{k=0}^\infty$, i.e., 
\begin{align}\label{Sz0limpoints}
\IS(\bfz^0):=\big\{\bfz^*\in\IR^{n+m}\mid \exists\; \hbox{a subsequence}\; \{\bfz^{k_l}\}_{l=0}^\infty\subset \{\bfz^k\}_{k=0}^\infty\;\;\hbox{satisfying}\;\;\lim_{l\to\infty}\bfz^{k_l}=\bfz^*\big\}.
\end{align}
Moreover, as stated in \cite{bolte2014proximal}, $\IS(\bfz^0)$ is a nonempty compact set when $\{\bfz^k\}_{k=0}^\infty$ is bounded. By the boundedess of partial subgradients in Lemma \ref{subg1}, we now verify that $\IS(\bfz^0)\subset\mathrm{crit}(J)$.

\begin{lemma}\label{subconvergence}
Suppose that Assumptions \ref{assum1}-\ref{assum2} hold, and the stepsizes $(\tau_i^k,\sigma_i^k)$ satisfy \eqref{step}. If the sequence   $\{\bfz^k\}_{k=0}^\infty$ generated by Algorithm \ref{algo1} is bounded, then the following statements hold.
\begin{itemize}
	\item[(i)] $J$ is  finite and constant on $\IS(\bfz^0)$.
	\item[(ii)] $\IS(\bfz^0)\subset\textnormal{crit}(J)$.
	\item[(iii)] $\lim_{k\to\infty}\textnormal{dist}(\bfz^k,\IS(\bfz^0))=0$.
\end{itemize}
\end{lemma}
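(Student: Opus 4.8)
The plan is to treat the three claims in the order (iii), (ii), (i): the first is purely topological, the second follows from the boundedness of the partial subgradients produced by the algorithm combined with the parametric closedness supplied by restricted prox-regularity, and the third — whose only genuine content is a reverse inequality for the limiting function value — will lean on the blockwise restricted prox-regularity of $H$ and will in turn use the criticality established in (ii). Throughout, recall from Lemma \ref{su_des} that $\{J(\bfz^k)\}$ is nonincreasing with $J(\bfz^{k+1})\le J(\bfz^k)-c\|\bfz^{k+1}-\bfz^k\|^2$, that $\|\bfz^{k+1}-\bfz^k\|\to 0$, and that boundedness of $\{\bfz^k\}$ makes $\IS(\bfz^0)$ nonempty and compact. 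For (iii) I would argue by contradiction: if $\mathrm{dist}(\bfz^k,\IS(\bfz^0))\not\to 0$, there are $\epsilon>0$ and a subsequence with $\mathrm{dist}(\bfz^{k_l},\IS(\bfz^0))>\epsilon$; boundedness yields a further convergent sub-subsequence, whose limit lies in $\IS(\bfz^0)$ by \eqref{Sz0limpoints}, contradicting the separation. This uses nothing beyond boundedness.

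For (ii), fix $\bfz^*\in\IS(\bfz^0)$ and a subsequence $\bfz^{k_l}\to\bfz^*$. Since $\|\bfz^{k+1}-\bfz^k\|\to 0$ we also have $\bfz^{k_l+1}\to\bfz^*$, and every ``mixed'' argument $(\bfx_{\le i}^{k_l+1},\bfx_{>i}^{k_l},\bfy^{k_l})$ and $(\bfx^{k_l+1},\bfy_{\le j}^{k_l+1},\bfy_{>j}^{k_l})$ converges to $\bfz^*$; each such point lies in $\mathrm{dom}(H)$ (its partial subdifferential is nonempty by the optimality conditions), which is closed by Assumption \ref{assum2}(i), so $\bfz^*\in\mathrm{dom}(H)\subset\mathrm{dom}(J)$. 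From \eqref{optx}-\eqref{opty}, $\bfd_{\bfx_i}^{k+1}:=-\tau_i^k(\bfx_i^{k+1}-\bfx_i^k)-\nabla_{\bfx_i}F(\bfx_{<i}^{k+1},\bfx_{\ge i}^k)\in\partial_{\bfx_i}H(\bfx_{\le i}^{k+1},\bfx_{>i}^k,\bfy^k)$ and likewise for $\bfy_j$; since $\tau_i^k=\gamma_i\mu_i^k$ with $\mu_i^k\le\bar\mu_i$ (so the stepsizes are bounded), $\|\bfz^{k+1}-\bfz^k\|\to 0$, and $\nabla F,\nabla G$ are continuous, passing to the subsequence gives $\bfd_{\bfx_i}^{k_l+1}\to-\nabla_{\bfx_i}F(\bfz^*)$ and $\bfd_{\bfy_j}^{k_l+1}\to-\nabla_{\bfy_j}G(\bfz^*)$. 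Choosing $c$ strictly larger than all $\|\nabla_{\bfx_i}F(\bfz^*)\|$ and $\|\nabla_{\bfy_j}G(\bfz^*)\|$, the blockwise restricted prox-regularity of $H$ (Assumption \ref{assum2}(ii)) together with Lemma \ref{subdfclosed} (and its multiblock extension) yields $-\nabla_{\bfx_i}F(\bfz^*)\in\partial_{\bfx_i}H(\bfz^*)$ and $-\nabla_{\bfy_j}G(\bfz^*)\in\partial_{\bfy_j}H(\bfz^*)$. By \eqref{optC} this is precisely $0\in\partial_{\bfx_i}J(\bfz^*)$ and $0\in\partial_{\bfy_j}J(\bfz^*)$ for all $i,j$, and Remark \ref{rem:J} (the inclusion \eqref{subsetJ}) upgrades this to $0\in\partial J(\bfz^*)$, i.e. $\bfz^*\in\mathrm{crit}(J)$.

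For (i), Lemma \ref{su_des} and the lower boundedness of $J$ (Assumption \ref{assum1}(iii)) give $J(\bfz^k)\downarrow J^*$ for some finite $J^*$. For $\bfz^*\in\IS(\bfz^0)$ with $\bfz^{k_l}\to\bfz^*$, lower semicontinuity of $J$ (a finite sum of closed functions) gives $J(\bfz^*)\le\liminf_l J(\bfz^{k_l})=J^*$, and $J(\bfz^*)$ is finite since $\bfz^*\in\mathrm{crit}(J)\subset\mathrm{dom}(J)$ by (ii). For the reverse inequality I would test the $\bfx_i$- and $\bfy_j$-subproblems of Algorithm \ref{algo1} at the corresponding blocks of $\bfz^*$: the linear and proximal correction terms vanish as $l\to\infty$ (because $\|\bfz^{k+1}-\bfz^k\|\to 0$ and $\nabla F,\nabla G$ are locally bounded), continuity of $F$ and $G$ absorbs their contributions, and the residual quantities are $H$ evaluated at perturbed points. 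To bound their $\limsup$ by $H(\bfz^*)$, I would invoke the blockwise restricted prox-regularity of $H$ — available at $\bfz^*$ precisely because (ii) furnishes the bounded subgradients $-\nabla_{\bfx_i}F(\bfz^*)\in\partial_{\bfx_i}H(\bfz^*)$, so $\bfz^*$ lies outside the relevant exclusion sets — together with the per-block descent estimates \eqref{desxi}-\eqref{desyj} of Lemma \ref{desH}, telescoped along the Gauss-Seidel sweep. This yields $\limsup_l J(\bfz^{k_l+1})\le J(\bfz^*)$; since $J(\bfz^{k_l+1})\to J^*$, we conclude $J^*\le J(\bfz^*)$, hence $J\equiv J^*$ on $\IS(\bfz^0)$.

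The main obstacle is exactly this reverse inequality in (i). In PALM the nonsmooth blocks are separable, so testing the subproblems at the cluster point leaves a constant nonsmooth value and the bound is immediate; here $H$ is nonseparable, so testing the $\bfx_i$-subproblem at $\bfx_i^*$ leaves $H(\bfx_{<i}^{k_l+1},\bfx_i^*,\bfx_{>i}^{k_l},\bfy^{k_l})$, whose remaining arguments still move with $l$, and merely lower-semicontinuous $H$ does not control the $\limsup$ of such terms. Converting the boundedness of the partial subgradients (Lemma \ref{subg1}) and the blockwise restricted prox-regularity of $H$ into the required upper semicontinuity along the iterates — while keeping track of the exclusion sets and of the coordinate shifts induced by the Gauss-Seidel ordering — is the delicate point; the remaining arguments are routine.
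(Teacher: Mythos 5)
Your parts (ii) and (iii) are correct and essentially follow the paper's route. For (iii) the contradiction argument is exactly the paper's. For (ii) you differ only in where you apply the parametric-closedness lemma: you pass to the limit in the inclusions $\bfd_{\bfx_i}^{k+1}\in\partial_{\bfx_i}H(\cdot)$ and invoke Lemma \ref{subdfclosed} for $H$ directly, whereas the paper first uses Lemma \ref{Sumlipshitz} to conclude that $J$ itself is blockwise restricted prox-regular and then applies Lemma \ref{subdfclosed} to the subgradients $\bfxi_{\bfx_i}^{k}\in\partial_{\bfx_i}J(\cdot)$, which tend to $\mathbf{0}$ by \eqref{xiinequality}. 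The two are equivalent via \eqref{optC}; the paper's version has the minor advantage of only needing the blockwise Lipschitz estimate \eqref{xiinequality} rather than joint continuity of $\nabla_{\bfx_i}F$ along the mixed iterates, but this is cosmetic.

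The divergence is in (i). The paper's entire proof of (i) is the single line that closedness of $J$ gives $\lim_l J(\bfz^{k_l})\ge J(\bfz^*)$, i.e., only the lower-semicontinuity half $J(\bfz^*)\le J^*$; combined with the monotone convergence $J(\bfz^k)\downarrow J^*$ this yields finiteness but not, by itself, constancy. You correctly single out the reverse inequality $J(\bfz^*)\ge J^*$ as the genuine content and the hard step, but your sketch for it does not close. Testing the $\bfx_i$-subproblem at $\bfx_i^*$ leaves $H(\bfx_{<i}^{k_l+1},\bfx_i^*,\bfx_{>i}^{k_l},\bfy^{k_l})$ on the majorizing side, and to drive its $\limsup$ down to something comparable with $H(\bfz^*)$ via restricted prox-regularity you would need, at each of these moving points, a partial subgradient of controlled norm in \emph{every} block that still differs from the corresponding block of $\bfz^*$, together with membership of those points outside the relevant exclusion sets. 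Neither is supplied by Lemma \ref{subg1}, which only controls the partial subgradient at the algorithm's own mixed iterate in the block just updated. So as written this step is a gap --- though, to be fair, it is a gap the paper's one-line proof of (i) does not even attempt to address, and your diagnosis of exactly where the nonseparability of $H$ obstructs the standard PALM-style argument is sharper than anything in the paper.
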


\begin{proof}
(i) For any $\bfz^*\in \IS(\bfz^0)$, there exists a subsequence $\{\bfz^{k_l}\}_{l=0}^\infty$ such that $\lim_{l\to\infty} \bfz^{k_l}=\bfz^*$. By the closedness of $J$, we have $   	\lim_{l\to\infty} J(\bfz^{k_l})\ge J(\bfz^*)$.

(ii) We have from Lemma \ref{su_des} (ii) that $\lim_{l\to\infty}\|\bfx^{k_l+1}-\bfx^{k_l}\|=0$ and $\lim_{l\to\infty}\|\bfy^{k_l+1}-\bfy^{k_l}\|=0$.   It follows from \eqref{xiinequality} that 
\begin{align*}
	\lim\limits_{l\to\infty} \bfxi_{_{\bfx_i}}^{k_l}\to 0,\quad i=1,\ldots,s,\quad \textnormal{and}\quad 
	\lim\limits_{l\to\infty} \bfxi_{_{\bfy_j}} ^{k_l}\to 0,\quad j=1,\ldots,t.
\end{align*}
By the blockwise restricted prox-regularity on $H$ (see Assumption \ref{assum2} (ii)) and gradient Lipschitz continuity of  $F_i$, $G_j$ (see Assumption \ref{assum1} (i)), we have from Lemma \ref{Sumlipshitz} that $J$ is blockwise restricted prox-regular on any bounded set. 
Furthermore, 
it follows from Lemma \ref{subdfclosed} that the partial subdifferentials of $J$ are parametrically closed at $\bfz^*$ with respect to the subsequence $\{\bfz^{k_l}\}_{l=0}^\infty$. Accordingly, we obtain 
\begin{align*}
	\bm{0}\in\partial_{\bfx_i}J(\bfz^*)\quad\hbox{and}\quad  \mathbf{0}\in\partial_{\bfy_j}J(\bfz^*).
\end{align*}   
Finally, we have from  \eqref{subsetJ} that  $\mathbf{0}\in\partial J(\bfz^*)$, which implies $\bfz^*\in\hbox{crit}(J)$ and $\IS(\bfz^0)\subset\hbox{crit}(J)$.


(iii) Suppose for contradiction that $\lim_{k\to\infty}\textnormal{dist}(\bfz^k,\IS(\bfz^0))\ne0$. Then there exist the strictly increasing
sequence $\{\bfz^{k_m}\}_{m=0}^\infty$ and a constant $\tilde{c}>0$ such that
\begin{align}\label{contra}
	\|\bfz^{k_m}-\bfz^*\|\ge\textnormal{dist}(\bfz^{k_m},\IS(\bfz^0))>\tilde{c}\quad \forall\bfz^*\in \IS(\bfz^0).
\end{align}
Since $\{\bfz^{k_m}\}_{m=0}^\infty$ is a subsequence of the bounded sequence $\{\bfz^k\}_{k=0}^\infty$, then it has a convergent subsequence $\{\bfz^{k_{m_l}}\}_{l=0}^\infty$ with the limit point  in $\IS(\bfz^0)$. Hence, $\lim_{l\to\infty}\textnormal{dist}(\bfz^{k_{m_l}},\IS(\bfz^0))=0$, which is a contradiction to \eqref{contra}. This completes the proof. 
\end{proof}

\subsection{Convergence of the iterative sequence
}\label{subsec4.3} We now present several stronger convergence results of Algorithm \ref{algo1} under the following extra assumption.

\begin{assumption}\label{assum:Aub}
For all $i=1,\ldots,s$, $j=1,\ldots,t$, and $\hat{\bfz}\in \mathcal{Z}^*$, the set-valued functions $\partial_{\bfx_i} J$ and $\partial_{\bfy_j} J$ admit Aubin property around $(\hat{\bfz},\mathbf{0})$ with moduli $\eta_i>0$ and $\eta'_j>0$, respectively.  
\end{assumption}

\begin{lemma}\label{disP}
Suppose that Assumptions \ref{assum1}, \ref{assum2} and \ref{assum:Aub} hold, and the stepsizes $(\tau_i^k,\sigma_j^k)$ satisfy \eqref{step}. If the sequence  $\{\bfz^k\}_{k=0}^\infty$ generated by Algorithm \ref{algo1} is bounded, then there exists  $k'>0$ such that 
\begin{align*}
	\textnormal{dist}(\mathbf{0},\partial J(\bfz^{k }))\le \varrho\|\bfz^{k}-\bfz^{k-1}\|\;\;\; \forall k>k' \;\;\hbox{and}\;\;\;	\lim\limits_{k\to\infty} \textnormal{dist}(\mathbf{0},\partial J(\bfz^{k}))=0,
\end{align*}  
where 
\begin{align*}
	\varrho=\sqrt{2(s+t)\left(\max\{\eta_i^2,(\gamma_i+1)^2\bar{\mu}_i^2\mid i=1,\ldots,s\}+\max\{(\eta'_j)^2,(\gamma'_j+1)^2\bar{\nu}_j^2\mid j=1,\ldots,t\}\right)}.
\end{align*}
\end{lemma}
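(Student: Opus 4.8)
The plan is to produce, for every sufficiently large $k$, one explicit element of $\partial J(\bfz^k)$ whose norm is controlled by $\|\bfz^k-\bfz^{k-1}\|$. The building blocks are the partial subgradients $\bfxi^{k}_{\bfx_i}\in\partial_{\bfx_i}J(\cdot)$ and $\bfxi^{k}_{\bfy_j}\in\partial_{\bfy_j}J(\cdot)$ of Lemma \ref{subg1}: the trouble is that they live at the \emph{intermediate} points arising in the Gauss--Seidel sweep, not at the common base point $\bfz^k$. I would use the Aubin property (Assumption \ref{assum:Aub}) to ``transport'' each of them to $\partial_{\bfx_i}J(\bfz^k)$, resp.\ $\partial_{\bfy_j}J(\bfz^k)$, and then splice the pieces together via the Cartesian inclusion \eqref{subsetJ} of Remark \ref{rem:J} to land in $\partial J(\bfz^k)$.

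First I would gather the facts already available. By Lemma \ref{su_des}(ii), $\|\bfz^k-\bfz^{k-1}\|\to0$; by Lemma \ref{subconvergence} (and the proof of its part (ii)), $\IS(\bfz^0)$ is a nonempty compact subset of $\mathcal{Z}^*$ and $\textnormal{dist}(\bfz^k,\IS(\bfz^0))\to0$; by \eqref{xiinequality}, $\|\bfxi^{k}_{\bfx_i}\|\to0$ and $\|\bfxi^{k}_{\bfy_j}\|\to0$. Since Assumption \ref{assum:Aub} gives the Aubin property of $\partial_{\bfx_i}J,\partial_{\bfy_j}J$ around $(\hat\bfz,\mathbf{0})$ for \emph{every} $\hat\bfz$ in the compact set $\IS(\bfz^0)\subset\mathcal Z^*$, with the \emph{uniform} moduli $\eta_i,\eta'_j$, a standard finite-covering/Lebesgue-number argument (completely analogous to the uniformized K{\L} property in Lemma \ref{uniKL}) upgrades this to a uniform version: there is $\epsilon>0$ such that, for each $i$ and each $\bfz,\bfz'$ with $\textnormal{dist}(\bfz,\IS(\bfz^0))<\epsilon$, $\|\bfz-\bfz'\|<\epsilon$, and each $\bfw\in\partial_{\bfx_i}J(\bfz')$ with $\|\bfw\|<\epsilon$, one has $\textnormal{dist}(\bfw,\partial_{\bfx_i}J(\bfz))\le\eta_i\|\bfz-\bfz'\|$ (and likewise for the $\bfy_j$ blocks with $\eta'_j$).

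Next, Lemma \ref{subg1} (shifted $k\!+\!1\mapsto k$) gives $\bfxi^{k}_{\bfx_i}\in\partial_{\bfx_i}J(\bfu^k_i)$ with $\bfu^k_i:=(\bfx^k_{\le i},\bfx^{k-1}_{>i},\bfy^{k-1})$, and $\bfxi^{k}_{\bfy_j}\in\partial_{\bfy_j}J(\bfv^k_j)$ with $\bfv^k_j:=(\bfx^{k},\bfy^k_{\le j},\bfy^{k-1}_{>j})$. Since $\bfu^k_i$ agrees with $\bfz^k$ on the blocks $\bfx_{\le i}$, $\|\bfu^k_i-\bfz^k\|^2=\sum_{l>i}\|\bfx^k_l-\bfx^{k-1}_l\|^2+\|\bfy^k-\bfy^{k-1}\|^2\le\|\bfz^k-\bfz^{k-1}\|^2$, and similarly $\|\bfv^k_j-\bfz^k\|^2=\sum_{l>j}\|\bfy^k_l-\bfy^{k-1}_l\|^2\le\|\bfz^k-\bfz^{k-1}\|^2$. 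Because $\|\bfz^k-\bfz^{k-1}\|\to0$, $\textnormal{dist}(\bfz^k,\IS(\bfz^0))\to0$, and the $\bfxi$'s go to $\mathbf 0$, there is $k'$ so that for all $k>k'$ the data $(\bfz^k,\bfu^k_i,\bfxi^{k}_{\bfx_i})$ and $(\bfz^k,\bfv^k_j,\bfxi^{k}_{\bfy_j})$ satisfy the hypotheses of the uniform Aubin estimate above; applying it (the limiting subdifferentials being closed, so the distances are attained) yields $\bfw^k_{\bfx_i}\in\partial_{\bfx_i}J(\bfz^k)$ and $\bfw^k_{\bfy_j}\in\partial_{\bfy_j}J(\bfz^k)$ with $\|\bfw^k_{\bfx_i}-\bfxi^{k}_{\bfx_i}\|\le\eta_i\|\bfu^k_i-\bfz^k\|$ and $\|\bfw^k_{\bfy_j}-\bfxi^{k}_{\bfy_j}\|\le\eta'_j\|\bfv^k_j-\bfz^k\|$. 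By \eqref{subsetJ}, $\bfw^k:=(\bfw^k_{\bfx_1},\dots,\bfw^k_{\bfx_s},\bfw^k_{\bfy_1},\dots,\bfw^k_{\bfy_t})\in\partial J(\bfz^k)$.

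Finally I would estimate $\textnormal{dist}(\mathbf 0,\partial J(\bfz^k))^2\le\|\bfw^k\|^2=\sum_i\|\bfw^k_{\bfx_i}\|^2+\sum_j\|\bfw^k_{\bfy_j}\|^2$. For each $i$, using $\|\bfw^k_{\bfx_i}\|\le\|\bfxi^k_{\bfx_i}\|+\eta_i\|\bfu^k_i-\bfz^k\|$, the inequality $(a+b)^2\le2a^2+2b^2$, the bound \eqref{xiinequalityA}, and the identity for $\|\bfu^k_i-\bfz^k\|^2$ above, one gets $\|\bfw^k_{\bfx_i}\|^2\le 2(\gamma_i+1)^2\bar\mu_i^2\|\bfx^k_i-\bfx^{k-1}_i\|^2+2\eta_i^2\bigl(\sum_{l>i}\|\bfx^k_l-\bfx^{k-1}_l\|^2+\|\bfy^k-\bfy^{k-1}\|^2\bigr)$; since the two groups of squared increments are over disjoint blocks and together are dominated by $\|\bfz^k-\bfz^{k-1}\|^2$, this is $\le 2\max\{(\gamma_i+1)^2\bar\mu_i^2,\eta_i^2\}\|\bfz^k-\bfz^{k-1}\|^2$, and symmetrically for $\|\bfw^k_{\bfy_j}\|^2$. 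Summing over $i=1,\dots,s$ and $j=1,\dots,t$ and using $s,t\le s+t$ gives $\textnormal{dist}(\mathbf 0,\partial J(\bfz^k))\le\varrho\|\bfz^k-\bfz^{k-1}\|$ with $\varrho$ exactly as stated; letting $k\to\infty$ and invoking $\|\bfz^k-\bfz^{k-1}\|\to0$ yields $\lim_k\textnormal{dist}(\mathbf 0,\partial J(\bfz^k))=0$. I expect the main obstacle to be the step of this plan that passes from the \emph{pointwise} Aubin property at each cluster point to a \emph{uniform} one valid on a whole $\epsilon$-tube around the compact set $\IS(\bfz^0)$ (so that $\bfz^k$ together with all the intermediate points $\bfu^k_i,\bfv^k_j$ are simultaneously covered); once that is in place the remainder is routine bookkeeping on which coordinate blocks each increment touches.
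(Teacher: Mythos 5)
Your proposal is correct and follows essentially the same route as the paper's proof: take the partial subgradients $\bfxi^k_{\bfx_i},\bfxi^k_{\bfy_j}$ at the intermediate Gauss--Seidel points, transport them to $\partial_{\bfx_i}J(\bfz^k),\partial_{\bfy_j}J(\bfz^k)$ via the Aubin property, assemble an element of $\partial J(\bfz^k)$ through the Cartesian inclusion \eqref{subsetJ}, and bound its norm using \eqref{xiinequality} together with the block structure of the displacements, arriving at exactly the stated $\varrho$. Your explicit finite-covering uniformization of the Aubin property over the compact set $\IS(\bfz^0)$ is in fact more careful than the paper, which simply asserts that $\bfz^k$ and the subgradients eventually enter fixed neighborhoods of a single critical point $\hat{\bfz}$; you correctly identify this as the one delicate step.
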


\begin{proof}
By the definitions of  $(\bfxi_{\bfx_i}^k,\bfxi_{\bfy_j}^k)$ in \eqref{subdiff}, we denote  	\begin{align}\label{subgradients}	\bar{\bfzeta}_{\bfx_i}^{k}:=\underset{\bfzeta_{\bfx_i}\in \partial_{\bfx_i} J (\bfz^{k})}{\hbox{argmin}}\|\bfzeta_{\bfx_i}-\bfxi_{\bfx_i}^k\|\;\;\hbox{and}\;\;\bar{\bfzeta}_{\bfy_j}:=\underset{\bfzeta_{\bfy_j}^{k}\in \partial_{\bfy_j} J(\bfz^{k})}{\hbox{argmin}} \|\bfzeta_{\bfy_j}-\bfxi_{\bfy_j}^k\|.
\end{align}
We have from Lemma \ref{subg1} that $\bfxi_{\bfx_i}^k\in \partial_{\bfx_i} J(\bfx^k_{\le i},\bfx^{k-1}_{>i},\bfy^{k-1})$ and $\bfxi_{\bfy_j}^k\in \partial_{\bfy_j} J(\bfx^k,\bfy^k_{\le j},\bfy^{k-1}_{>j})$. Since $\{\bfz^k\}_{k=0}^\infty$ is bounded, it follows from Lemma \ref{subconvergence} that, for the critical point $\hat{\bfz}\in\IS(\bfz^0)\subset \textnormal{crit}(J)$, there exists  $k'>0$ such that  $\bfz^k\in\B(\hat{\bfz})$,  $\bfxi_{\bfx_i}^k\in\B^{\delta_1}(\mathbf{0})$, $\bfxi_{\bfy_j}^k\in\B^{\delta_2}(\mathbf{0})$ for all $k>k'$. By the Aubin property of $\partial_{\bfx_i} J$ and $\partial_{\bfy_j} J$ in Assumption \ref{assum:Aub},  we have  
\begin{subequations}\label{distxi}
	\begin{align}
		\textnormal{dist}(\bfxi_{\bfx_i}^k,\partial_{\bfx_i} J(\bfz^k))=\|\bar{\bfzeta}_{\bfx_i}^{k}-\bfxi_{\bfx_i}^k\|&\le \eta_{i}\|(\bfx^k,\bfy^k)-(\bfx^k_{\le i},\bfx^{k-1}_{>i},\bfy^{k-1})\|\nonumber\\
		&= \eta_{i}\|(\bfx_{>i}^k,\bfy^k)-(\bfx^{k-1}_{>i},\bfy^{k-1})\|,\label{distxiA}\\
		\textnormal{dist}(\bfxi_{\bfy_j}^{k},\partial_{\bfy_j} J(\bfz^k))=\|\bar{\bfzeta}_{\bfy_j}^{k}-\bfxi_{\bfy_j}^k\|&\le \eta'_{j}\|(\bfx^k,\bfy^k)-(\bfx^{k},\bfy_{\le j}^{k},\bfy_{>j}^{k-1})\|= \eta'_{j}\|\bfy_{>j}^k-\bfy_{>j}^{k-1}\|.\label{distxiB}
	\end{align}
\end{subequations}
We have from \eqref{xiinequalityA} and  \eqref{distxiA} that
\begin{align}\label{xixi}
	\|\bar{\bfzeta}_{\bfx_i}^{k}\|^2&\le2\|\bfxi_{\bfx_i}^k\|^2+2\|\bar{\bfzeta}_{\bfx_i}^{k}-\bfxi_{\bfx_i}^k\|^2\nonumber\\
	&\le 2(\tau_i^{k-1}+\mu_i^{k-1})^2\|\bfx^{k}_i-\bfx_i^{k-1}\|^2+2\eta_{i}^2\|(\bfx_{>i}^k,\bfy^k)-(\bfx^{k-1}_{>i},\bfy^{k-1})\|^2\nonumber\\
	&\leq2(\gamma_i+1)^2\bar{\mu}_i^2\|\bfx^{k}_i-\bfx_i^{k-1}\|^2+2\eta_{i}^2\|(\bfx_{>i}^k,\bfy^k)-(\bfx^{k-1}_{>i},\bfy^{k-1})\|^2\nonumber\\
	&\le2\bar{\vartheta}^2 \|(\bfx_{\geq i}^k,\bfy^k)-(\bfx^{k-1}_{\geq i},\bfy^{k-1})\|^2,
\end{align}
where $\bar{\vartheta}:=\max\big\{\eta_i, (\gamma_i+1)\bar{\mu}_i\mid i=1,\ldots,s\big\}$. 
Analogously, we have from \eqref{xiinequalityB} and  \eqref{distxiB} that
\begin{align}\label{xiyj}
	\|\bar{\bfzeta}_{\bfy_j}^{k}\|^2&\le2\|\bfxi_{\bfy_j}^k\|^2+2 \|\bar{\bfzeta}_{\bfy_j}^{k}-\bfxi_{\bfy_j}^k\|^2\nonumber\\
	&\le 2(\sigma_j^{k-1}+\nu_j^{k-1})^2\|\bfy^{k}_j-\bfy_j^{k-1}\|^2+2(\eta'_j)^2\|\bfy^{k}_{>j}-\bfy^{k-1}_{>j}\|^2\nonumber\\
	&\le2(\gamma'_j+1)^2\bar{\nu}_j^2\|\bfy^{k}_j-\bfy_j^{k-1}\|^2+2(\eta'_j)^2\|\bfy^{k}_{>j}-\bfy^{k-1}_{>j}\|^2\nonumber\\
	&\le2(\bar{\vartheta}')^2 \|\bfy_{\geq j}^k-\bfy_{\geq j}^{k-1}\|,
\end{align}
where   $\bar{\vartheta}':=\max\big\{\eta_j',  (\gamma'_j+1)\bar{\nu}_j\mid j=1,\ldots,t\big\}$. Therefore, 
\begin{align*}
	\left(\textnormal{dist}(\mathbf{0},\partial J(\bfz^k))\right)^2&\le\sum_{i=1}^s\|\bar{\bfzeta}_{\bfx_i}^{k}\|^2+\sum_{j=1}^t\|\bar{\bfzeta}_{\bfy_j}^{k}\|^2 \le 2(s+t)\left(\bar{\vartheta}^2+(\bar{\vartheta}')^2\right)\|\bfz^{k}-\bfz^{k-1}\|^2=:\varrho^2 \|\bfz^{k}-\bfz^{k-1}\|^2.
\end{align*}
Furthermore, it follows from Lemma \ref{su_des} (ii) that $\lim_{k\to\infty} \textnormal{dist}(\mathbf{0},\partial J(\bfz^k))=0$. 
\end{proof}
\begin{remark}
For Assumption \ref{assum:Aub}, it is typically difficult to check the Aubin property of set-valued functions in Assumption \ref{assum:Aub} because $\hat{\bfz}$ is unknown. However, as discussed in \cite[section 5]{levy2000stability}, the prox-regularity can guarantee the Aubin property of the subdifferential mappings holds. Actually, it follows from Lemma \ref{subconvergence} that any subsequence of the bounded sequence $\{\bfz^k\}_{k=0}^\infty$ converges to a critical point $\hat{\bfz}\in \Z^*$. Hence, given $\hat{\bfz}\in \Z^*$,  there exists $k'\ge 1$ such that  $\bfz^k\in\B(\hat{\bfz})$, $\bfxi_{_{\bfx_i}}^k\in\B^{\delta_1}({\bf 0})$ and  $\bfxi_{_{\bfy_j}}^k\in\B^{\delta_2}({\bf 0})$ for all $k\ge k'$. It implies that $J$ admits the blockwise prox-regularity around  $\hat{\bfz}$. Namely,  Assumption \ref{assum:Aub} holds logically from Assumptions \ref{assum1}-\ref{assum2}.
The interested readers can refer to, e.g., \cite{levy1997characterizing,mordukhovich2005subgradient,jourani2012c1}, for more details.
\end{remark}

\begin{theorem}
Suppose that $J$ is a K{\L}  function, Assumptions \ref{assum1}, \ref{assum2} and \ref{assum:Aub} hold, and the stepsizes satisfy \eqref{step}. If the sequence $\{\bfz^k\}_{k=0}^\infty$  generated by Algorithm \ref{algo1} is bounded, then the following statements  hold.
\begin{itemize}
	\item[(i)] The sequence $\{\bfz^k\}_{k=0}^\infty$ has finite length, i.e., $\sum_{k=1}^\infty \|\bfz^{k+1}-\bfz^k\|<\infty$. 
	\item[(ii)] The sequence $\{\bfz^k\}_{k=0}^\infty$ converges to a critical point  of $J$.
\end{itemize}
\end{theorem}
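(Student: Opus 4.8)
The plan is to follow the now-standard ``descent + safeguard + KŁ'' blueprint established by Attouch--Bolte--Svaiter and Bolte--Sztankalm, adapted to the machinery already assembled in this paper. The three ingredients are all in hand: a sufficient-decrease inequality of the form $J(\bfz^{k+1})\le J(\bfz^k)-c\|\bfz^{k+1}-\bfz^k\|^2$ (Lemma~\ref{su_des}(i)), a relative-error bound $\textnormal{dist}(\mathbf{0},\partial J(\bfz^{k}))\le\varrho\|\bfz^{k}-\bfz^{k-1}\|$ valid for all large $k$ (Lemma~\ref{disP}), and the uniformized KŁ property on the compact cluster set $\IS(\bfz^0)$ (Lemma~\ref{uniKL}, applicable because $J$ is constant and finite on $\IS(\bfz^0)$ by Lemma~\ref{subconvergence}(i)--(ii)). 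First I would record that the monotone sequence $\{J(\bfz^k)\}$ converges to the finite value $J^*=J(\bfz^*)$ for every $\bfz^*\in\IS(\bfz^0)$; if $J(\bfz^{k})=J^*$ for some finite $k$, the decrease inequality forces $\bfz^{k+1}=\bfz^k$ and the sequence is eventually constant, so I may assume $J(\bfz^k)>J^*$ for all $k$.

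Next I would invoke Lemma~\ref{uniKL}: there exist $\epsilon>0$, $\eta>0$ and $\varphi\in\Phi_\eta$ such that the KŁ inequality $\varphi'(J(\bfz)-J^*)\,\textnormal{dist}(\mathbf{0},\partial J(\bfz))\ge1$ holds on the ``tube'' $\{\bfz:\textnormal{dist}(\bfz,\IS(\bfz^0))<\epsilon\}\cap\{J^*<J(\bfz)<J^*+\eta\}$. Using Lemma~\ref{subconvergence}(iii) (so $\textnormal{dist}(\bfz^k,\IS(\bfz^0))\to0$) together with $J(\bfz^k)\downarrow J^*$ and the bound from Lemma~\ref{disP}, pick $k'$ so large that for all $k>k'$ the iterate $\bfz^k$ lies in this tube and the relative-error bound holds. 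For such $k$, combining the concavity of $\varphi$ with the descent inequality gives, after the routine manipulation,
\begin{align*}
2\|\bfz^{k+1}-\bfz^k\|\le\|\bfz^k-\bfz^{k-1}\|+\frac{\varrho}{c}\bigl(\varphi(J(\bfz^k)-J^*)-\varphi(J(\bfz^{k+1})-J^*)\bigr),
\end{align*}
where I use $\varphi'(J(\bfz^k)-J^*)\ge1/\textnormal{dist}(\mathbf{0},\partial J(\bfz^k))\ge1/(\varrho\|\bfz^k-\bfz^{k-1}\|)$ and the arithmetic--geometric inequality $2\sqrt{ab}\le a+b$. Summing this telescoping inequality over $k$ from $k'+1$ to any $N$ and letting $N\to\infty$ yields $\sum_{k}\|\bfz^{k+1}-\bfz^k\|<\infty$, which is statement~(i). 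Statement~(ii) then follows immediately: a summable-increments sequence is Cauchy, hence convergent to some $\bar{\bfz}$, which lies in $\IS(\bfz^0)\subset\textnormal{crit}(J)$ by Lemma~\ref{subconvergence}(ii), so $\bar{\bfz}$ is a critical point of $J$.

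The part requiring the most care is the bookkeeping around Lemma~\ref{disP}: its relative-error bound is stated around a \emph{fixed} critical point $\hat{\bfz}\in\Z^*$ with $\bfz^k\in\B(\hat{\bfz})$, whereas the uniformized KŁ inequality lives on a neighborhood of the whole set $\IS(\bfz^0)$. One must check that for $k$ large enough the iterates simultaneously enter both regions --- this is where Lemma~\ref{subconvergence}(iii) and the convergence $J(\bfz^k)\to J^*$ do the work --- and also that the constant $\varrho$ from Lemma~\ref{disP} may be taken uniform. A secondary subtlety is the shift in index: the error bound controls $\textnormal{dist}(\mathbf{0},\partial J(\bfz^k))$ by $\|\bfz^k-\bfz^{k-1}\|$ (a backward difference), so when I apply KŁ at $\bfz^k$ the resulting recursion naturally couples the increment $\|\bfz^{k+1}-\bfz^k\|$ with the \emph{previous} increment $\|\bfz^k-\bfz^{k-1}\|$; the telescoping argument above is organized precisely to absorb this. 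Everything else is the standard finite-length computation, and the passage from (i) to (ii) is purely the Cauchy criterion.
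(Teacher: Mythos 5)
Your proposal is correct and follows essentially the same route as the paper: both combine the sufficient decrease of Lemma~\ref{su_des}, the relative-error bound of Lemma~\ref{disP}, and the uniformized K{\L} inequality of Lemma~\ref{uniKL} on the cluster set, then use concavity of $\varphi$ together with $2\sqrt{ab}\le a+b$ to obtain the same recursion $2\|\bfz^{k+1}-\bfz^k\|\le\|\bfz^k-\bfz^{k-1}\|+\tfrac{\varrho}{c}\bigl(\varphi(J(\bfz^k)-J^*)-\varphi(J(\bfz^{k+1})-J^*)\bigr)$, which telescopes to finite length and yields convergence via the Cauchy criterion. The bookkeeping you flag (simultaneous entry into the K{\L} tube and the region where Lemma~\ref{disP} applies, and the backward-difference index shift) is handled in the paper exactly as you describe, by taking $k$ beyond the maximum of the relevant thresholds.
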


\begin{proof}
By the boundedness of $\{\bfz^k\}_{k=0}^\infty$, there exists a subsequence $\{\bfz^{k_l}\}_{l=0}^\infty$ such that $\lim_{l\to\infty}\bfz^{k_l}=\bfz^*$. Since $\{ J (\bfz^k)\}_{k=0}^\infty$ is nonincreasing   and $\lim_{k\to\infty} J(\bfz^k)\ge J(\bfz^*)=J^*$,  there exists  integer $k_0>0$ such that 
\begin{align*}
	J(\bfz^*)<J(\bfz^{k_0})<J(\bfz^*)+d\quad\;\forall d>0,
\end{align*}
i.e.,  $\bfz^k\in[J(\bfz^*)<J(\bfz^{k_0})<J(\bfz^*)+d]$ for all $k>k_0$. On the other hand, it follows from Lemma \ref{subconvergence}\;(iii) that  $\lim_{k\to\infty}\textnormal{dist}(\bfz^k,\IS(\bfz^0))=0$. Therefore, for any $\epsilon>0$, there exists  integer $k_1>0$ such that $\textnormal{dist}(\bfz^k,\IS(\bfz^0))<\epsilon$ for all $k>k_1$. Additionally, for the $k'$ in Lemma \ref{disP}, we deduce that $\bfz^k$ belongs to the intersection for all $k>l:=\max\{ k_0,k_1,k'\}+1$.

(i) By Lemma \ref{uniKL}, there exists a concave function $\varphi\in\Phi_{\eta}$ such that
\begin{align*}
	\varphi'(J(\bfz^k)-J(\bfz^*))\textnormal{dist}(0,\partial J(\bfz^k))\ge 1\quad \forall k\ge l.
\end{align*} 
Furthermore, it follows from Lemma \ref{disP} that 
\begin{align}\label{conca1}
	\varphi'(J(\bfz^k)-J(\bfz^*))\ge \left(\textnormal{dist}(0,\partial J(\bfz^k))\right)^{-1}\ge \left(\varrho\|\bfz^k-\bfz^{k-1}\|\right)^{-1}.
\end{align}
By the concavity of $\varphi$, we have 
\begin{align}\label{conca}
	\varphi(J(\bfz^{k})-J(\bfz^*))-\varphi(J(\bfz^{k+1})-J(\bfz^*))&\ge \varphi'(J(\bfz^{k})-J(\bfz^*))(J(\bfz^k)-J(\bfz^{k+1}))\nonumber\\
	&\ge \frac{c\|\bfz^{k+1}-\bfz^k\|^2}{\varrho\|\bfz^k-\bfz^{k-1}\|},
\end{align}
where the last inequality is due to Lemma \ref{su_des} (i) and \eqref{conca1}. By defining  $r_k:=\varphi(J(\bfz^k)-J(\bfz^*))$, we have   $\{r_k\}_{k=0}^\infty$ is nonincreasing since  $\varphi'(t)>0$. Let $\underline{r}:=\inf_{k\geq0}r_k$ and $c'=\varrho/c$. Then, \eqref{conca} can be rewritten as  
\begin{align*}
	\|\bfz^{k+1}-\bfz^k\|^2\le \left(c'(r_k-r_{k+1})\|\bfz^k-\bfz^{k-1}\|\right)^{1/2}.
\end{align*}
By the identity $2\sqrt{ab}\le a+b$ for all $a$, $b\ge 0$, we deduce  
\begin{align}\label{sqrt}
	2\|\bfz^{k+1}-\bfz^k\|\le c'(r_k-r_{k+1})+\|\bfz^k-\bfz^{k-1}\|.
\end{align}
By summing \eqref{sqrt} over $k=l+1,\ldots,K$, it yields 
\begin{align*}  	\sum\limits_{k=l+1}^K\|\bfz^{k+1}-\bfz^k\|&\le c'(r_{l+1}-r_{K+1})+\|\bfz^{K+1}-\bfz^K\|\\
	&\le c'(r_{l+1}-\underline{r})+\|\bfz^{K+1}-\bfz^K\|.
\end{align*}
By letting $K\to\infty$, we have  from Lemma \ref{su_des} that $\sum_{k=0}^\infty\|\bfz^{k+1}-\bfz^k\|<\infty$. This implies that
\begin{align*}
	\|\bfz^{k_m}-\bfz^{k_n}\|\le \sum\limits_{k=k_n}^{k_m-1}\|\bfz^{k+1}-\bfz^k\|<\sum\limits_{k=k_n}^\infty\|\bfz^{k+1}-\bfz^k\|\quad\;\; \forall k_m>k_n\ge l.
\end{align*}
By taking $k_n\to\infty$, we obtain $\|\bfz^{k_m}-\bfz^{k_n}\|\to0$ from Lemma \ref{su_des}\;(ii). It implies that $\{\bfz^{k_n}\}_{n=0}^\infty$ is a Cauchy sequence, and hence a convergent sequence.  It follows from Lemma \ref{subconvergence} that $\{\bfz^k\}_{k=0}^\infty$ converges to a $\bfz^*\in\mathrm{crit}(J)$.
\end{proof}

\begin{theorem}
Supposed that Assumptions \ref{assum1}, \ref{assum2} and \ref{assum:Aub} hold. Let $\{\bfz^k\}_{k=0}^\infty$ be a sequence generated by Algorithm \ref{algo1} with  initial point $\bfz^0$.  Assume that $J$ admits the K{\L} property and   $\varphi(t)=\frac{c}{\theta}t^{\theta}$ with $\theta\in(0,1]$ and $c>0$. Then the following  statements hold.
\begin{enumerate}[(i)]
	\item If $\theta=1$, Algorithm \ref{algo1} terminates in a finite number of iterations.
	\item If $\theta\in[\frac{1}{2},1)$, then there exist $k_2\ge 1$, $w>0$ and $q\in[0,1)$ such that 
	\begin{align*}
		J(\bfz^k)-J^*\le w q^{k-k_2}\quad\forall k>k_2,
	\end{align*}
	where $J^*=J(\bfz^*)$ for all $\bfz^*\in\IS(\bfz^0)$.
	\item If $\theta\in(0,\frac{1}{2})$, then there exists an integer $k_3>0$ and a constant $w>0$ such that
	\begin{align*}
		J(\bfz^k)-J^*\le \left(\frac{w}{(k-k_3)(1-2\theta)}\right)^{\frac{1}{1-2\theta}}\;\;\quad\forall k>k_3.
	\end{align*}
\end{enumerate}	
\end{theorem}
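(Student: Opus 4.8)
\emph{Overall approach.} The plan is to run the standard Kurdyka--{\L}ojasiewicz rate machinery, driven by three facts already established: the sufficient-decrease estimate $J(\bfz^{k})-J(\bfz^{k+1})\ge c_{1}\|\bfz^{k+1}-\bfz^{k}\|^{2}$ of Lemma~\ref{su_des}\,(i) (I write $c_{1}$ for its constant, to avoid clashing with the $c$ appearing in $\varphi$), the step-controlled subgradient bound $\textnormal{dist}(\mathbf{0},\partial J(\bfz^{k}))\le\varrho\|\bfz^{k}-\bfz^{k-1}\|$ of Lemma~\ref{disP}, and the explicit desingularizing inequality $\varphi'(J(\bfz^{k})-J^{*})\,\textnormal{dist}(\mathbf{0},\partial J(\bfz^{k}))\ge1$ with $\varphi'(t)=c\,t^{\theta-1}$, which holds for all large $k$ because, exactly as in the proof of the preceding theorem, the bounded iterates eventually lie in the uniformized K{\L} region of Lemma~\ref{uniKL}. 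Put $r_{k}:=J(\bfz^{k})-J^{*}$, where $J^{*}=\lim_{k}J(\bfz^{k})=J(\bfz^{*})$; by Lemma~\ref{su_des} the scalar sequence $\{r_{k}\}$ is nonnegative, nonincreasing and $r_{k}\to0$. If $r_{k_{0}}=0$ for some $k_{0}$, monotonicity forces $r_{k}=0$ for all $k\ge k_{0}$, and then sufficient decrease gives $\bfz^{k+1}=\bfz^{k}$, so the algorithm stops after finitely many iterations and all three conclusions hold trivially; hence I may, and do, assume $r_{k}>0$ for every $k$.

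\emph{Master recursion.} Combining the K{\L} inequality with Lemma~\ref{disP} gives, for all large $k$, $\|\bfz^{k}-\bfz^{k-1}\|\ge\varrho^{-1}\textnormal{dist}(\mathbf{0},\partial J(\bfz^{k}))\ge(c\varrho)^{-1}r_{k}^{\,1-\theta}$; substituting this into $r_{k-1}-r_{k}\ge c_{1}\|\bfz^{k}-\bfz^{k-1}\|^{2}$ yields the scalar recursion $r_{k-1}-r_{k}\ge C\,r_{k}^{\,2(1-\theta)}$ with $C:=c_{1}/(c\varrho)^{2}>0$, valid for all $k$ beyond some index. The whole theorem then follows by reading this recursion according to the exponent $2(1-\theta)$.

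\emph{The three regimes.} If $\theta=1$ the exponent is $0$, so the recursion reads $r_{k-1}-r_{k}\ge C>0$ for all large $k$, contradicting $r_{k}\to0$; hence the standing assumption $r_{k}>0$ is impossible, i.e.\ the algorithm terminates finitely, which is (i). If $\theta\in[\tfrac12,1)$ then $2(1-\theta)\le1$, and since $r_{k}\to0$ we have $r_{k}\le1$ for all large $k$, whence $r_{k}^{\,2(1-\theta)}\ge r_{k}$ and the recursion becomes $r_{k-1}\ge(1+C)r_{k}$; iterating gives $r_{k}\le w q^{\,k-k_{2}}$ with $q:=(1+C)^{-1}\in(0,1)$ and $w:=r_{k_{2}}$, which is (ii). If $\theta\in(0,\tfrac12)$ set $\alpha:=2(1-\theta)>1$ and invoke the classical scalar lemma used in the K{\L}-rate analysis of PALM~\cite{bolte2014proximal}: from $r_{k-1}-r_{k}\ge C r_{k}^{\alpha}$ with $\alpha>1$ and $r_{k}\downarrow0$, estimate $r_{k}^{-(\alpha-1)}-r_{k-1}^{-(\alpha-1)}=(\alpha-1)\int_{r_{k}}^{r_{k-1}}s^{-\alpha}\,ds$ from below by a fixed positive constant --- splitting into the case $r_{k}\ge\tfrac12 r_{k-1}$ (bound the integrand by its value at the right endpoint and use the recursion) and the case $r_{k}<\tfrac12 r_{k-1}$ (use $r_{k}^{-(\alpha-1)}>2^{\alpha-1}r_{k-1}^{-(\alpha-1)}$ together with $r_{k-1}^{-(\alpha-1)}\to\infty$) --- so that $r_{k}^{-(\alpha-1)}$ grows at least linearly in $k$; telescoping and absorbing the constants, and using $\alpha-1=1-2\theta$, yields $r_{k}\le\bigl(\tfrac{w}{(k-k_{3})(1-2\theta)}\bigr)^{1/(1-2\theta)}$, which is (iii).

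\emph{Main obstacle.} None of the steps is deep; the points requiring care are the bookkeeping that the uniformized K{\L} inequality genuinely applies to the tail of $\{\bfz^{k}\}$ (imported from the previous theorem), keeping the two constants ``$c$'' apart, and --- most substantively --- the scalar-sequence lemma behind regime (iii), where reproducing exactly the displayed constant needs the case split above (or, equivalently, a comparison with the solution of $\dot y=-Cy^{\alpha}$). I would isolate that lemma as a separate statement and prove it once.
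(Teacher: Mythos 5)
Your proposal is correct and is exactly the standard Attouch--Bolte/PALM rate argument that the paper itself invokes (it simply cites \cite[Remark 6]{bolte2014proximal} and omits the details); the master recursion $r_{k-1}-r_k\ge C r_k^{2(1-\theta)}$ obtained from Lemma~\ref{su_des}, Lemma~\ref{disP} and the K{\L} inequality, followed by the three-regime case analysis, is precisely the intended proof. The only added value of your write-up over the paper's is that it makes the tail-localization of the uniformized K{\L} inequality and the scalar-sequence lemma for the case $\theta\in(0,\tfrac12)$ explicit, which is sound.
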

\begin{proof}
The above theorem is an immediately consequence of \cite[Remark 6]{bolte2014proximal}. Herein, we omit the proof for brevity.	
\end{proof}

\subsection{Convergence rate of the iterative sequence}\label{subsec4.4} Now we analyze the sublinear convergence rate of Algorithm \ref{algo1} under the mild conditions on stepsizes. Compared to the $\bfx_i$- and $\bfy_j$-subproblems in Algorithm \ref{algo1}, we define the auxiliary iterate $\bar{\bfz}^k:=(\bar{\bfx}^k,\bar{\bfy}^k)$ by 
\begin{subnumcases}{\label{res2}}
\bar{\bfx}^k_i\in\underset{\bfx_i\in\IR^{n_i}}{\hbox{argmin}}\Big\{H(\bfx_{<i}^k,\bfx_i,\bfx_{>i}^k,\bfy^k)+\langle \bfx_i-\bfx_i^k,\nabla_{\bfx_i}F(\bfx^k)\rangle+\frac{\tilde{\tau}_i^k}{2}\|\bfx_i-\bfx^k_i\|^2\Big\},\label{prox}\\
\bar{\bfy}^k_j\in\underset{\bfy_j\in\IR^{m_j}}{\hbox{argmin}}\Big\{H(\bfx^k,\bfy_{<j}^k,\bfy_j,\bfy_{>j}^k)+\langle \bfy_j-\bfy_j^k,\nabla_{\bfy_j}G(\bfy^k)\rangle+\frac{\tilde{\sigma}_j^k}{2}\|\bfy_j-\bfy^k_j\|^2\Big\},\label{proy}
\end{subnumcases}
where $\tilde{\tau}_i^k>0$ and $\tilde{\sigma}_j^k>0$. 
Note that the auxiliary iterate $\bar{\bfz}^k$ is essentially futile for numerical computation but is merely used to establish the convergence rate.

Given  $\bfx_{-i}=(\bfx_{<i},\bfx_{>i})\in\IR^{n-n_i}$ and $\bfy_{-j}=(\bfy_{<j},\bfy_{>j})\in\IR^{m-m_j}$, by using the proximity of $H(\bfx_{<i},\cdot,\bfx_{>i},\bfy)$ and  $H(\bfx,\bfy_{<j},\cdot,\bfy_{>j})$, we define the error function by
\begin{subequations}
\begin{align*} 
	&\G_{\bfx_i}(\bfz):=\bfx_i-({\bm I}+\frac{1}{\tau_i}\partial_{\bfx_i}H)^{-1}\left(\bfx_i-\frac{1}{\tau_i}\nabla_{\bfx_i}F(\bfx)\right),\quad i=1,\ldots,s,\\
	&\G_{\bfy_j}(\bfz):=\bfy_j-({\bm I}+\frac{1}{\sigma_j}\partial_{\bfy_j}H)^{-1}\left(\bfy_j-\frac{1}{\sigma_j}\nabla_{\bfy_j}G(\bfy)\right),\quad j=1,\ldots,t.
\end{align*}
\end{subequations}

Accordingly, by the optimality conditions of \eqref{res2}, we have  
\begin{align*}
\begin{cases}
	\bar{\bfx}^k_i=\bfx_i^k-\G_{\bfx_i}(\bfz^k),\\
	\bar{\bfy}^k_j=\bfy_j^k-\G_{\bfy_j}(\bfz^k).
\end{cases}
\end{align*}  
Furthermore, we denote 
\begin{align}\label{pgdmap}
\G(\bfz):=\otimes_{i=1}^s\G_{\bfx_i}(\bfz)\times \otimes_{j=1}^t\G_{\bfy_j}(\bfz).
\end{align}
It follows from \eqref{res2} that  $\G(\bfz^k)=\bfz^k-\bar{\bfz}^{k}$.   Let $\hat{\bfz}$ be a critical point of \eqref{geP}. By the optimality condition, we have $\mathbf{0}\in\G(\hat{\bfz})$. Hence, $\G(\bfz^k)$ essentially quantifies the violation of stationary of $\bfz^k$.

\begin{lemma}\label{ratedsc}
Suppose that Assumption \ref{assum1} holds. Then, the sequence $\{\bar{\bfz}^{k}=(\bar{\bfx}^k,\bar{\bfy}^k)\}_{k=0}^\infty$ defined by \eqref{res2} satisfies
\begin{subequations}
	\begin{align}
		&H(\bfx_{<i}^{k},\bar{\bfx}_i^k,\bfx_{>i}^{k},\bfy^k)+F(\bfx_{<i}^{k},\bar{\bfx}_i^k,\bfx_{>i}^{k}) \le  H(\bfx^k,\bfy^k)+F(\bfx^k)-\frac{1}{2}(\tilde{\tau}_i^k-\tilde{\mu}_i^k)\|\bar{\bfx}_i^k-\bfx_i^k\|^2,\label{Hx}\\
		&H(\bfx^{k},\bfy_{<j}^{k},\bar{\bfy}_j^k,\bfy_{>j}^{k})+G(\bfy_{<j}^{k},\bar{\bfy}_j^{k},\bfy_{>j}^{k}) \le H(\bfx^{k},\bfy^{k})+G(\bfy^k)-\frac{1}{2}(\tilde{\sigma}_j^k-\tilde{\nu}_j^k)\|\bar{\bfy}_j^k-\bfy_j^k\|^2 \label{Hy}
	\end{align}
\end{subequations}
for all $i=1,\ldots, s$ and $j=1,\ldots,t$, where $\tilde{\mu}_i^k$ and $\tilde{\nu}_j^k$ are the Lipschitz constants of $\nabla_{\bfx_i}F(\bfx_{<i}^{k},\cdot,\bfx_{>i}^{k})$ and  $\nabla_{\bfy_j} G(\bfy_{<i}^{k},\cdot,\bfy_{>j}^{k})$, respectively.
\end{lemma}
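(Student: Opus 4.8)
The plan is to recognize that \eqref{res2} is, componentwise, nothing but an instance of the proximal gradient recursion \eqref{pgd} analyzed in Lemma~\ref{despgd}, and to apply that lemma to each block separately. Concretely, for the $\bfx_i$-block I would fix $\bfx_{-i}=\bfx_{-i}^k$ and $\bfy=\bfy^k$ and set $\varphi(\cdot):=F(\bfx_{<i}^{k},\cdot,\bfx_{>i}^{k})=F_i(\cdot)$ and $\psi(\cdot):=H(\bfx_{<i}^{k},\cdot,\bfx_{>i}^{k},\bfy^k)$. By Assumption~\ref{assum1}~(i), $F_i\in C^{1,1}_{\mu_i}(\IR^{n_i})$, and by the remark following Assumption~\ref{assum1} the relevant Lipschitz constant for the partial gradient $\nabla_{\bfx_i}F(\bfx_{<i}^{k},\cdot,\bfx_{>i}^{k})$ is exactly the quantity I would call $\tilde{\mu}_i^k$; also $H(\bfx_{<i}^k,\cdot,\bfx_{>i}^k,\bfy^k)$ is proper closed by Assumption~\ref{assum1}~(ii). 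Then \eqref{prox} is precisely \eqref{pgd} with $\tau:=\tilde{\tau}_i^k$, $\bfx:=\bfx_i^k$, and $\bar{\bfx}:=\bar{\bfx}_i^k$, since the proximity reformulation in Remark~\ref{algore}~(i) shows that the argmin in \eqref{prox} equals $\hbox{prox}_{\psi/\tilde{\tau}_i^k}(\bfx_i^k-\frac{1}{\tilde{\tau}_i^k}\nabla\varphi(\bfx_i^k))$.

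The second step is simply to invoke Lemma~\ref{despgd} with the above identifications and evaluate it at $\bfx=\bfx_i^k$. This yields
\[
\psi(\bar{\bfx}_i^k)+\varphi(\bar{\bfx}_i^k)\le \psi(\bfx_i^k)+\varphi(\bfx_i^k)-\frac{\tilde{\tau}_i^k-\tilde{\mu}_i^k}{2}\|\bar{\bfx}_i^k-\bfx_i^k\|^2,
\]
which, after unfolding the definitions of $\varphi$ and $\psi$ and noting $\psi(\bfx_i^k)=H(\bfx^k,\bfy^k)$ and $\varphi(\bfx_i^k)=F(\bfx^k)$, is exactly \eqref{Hx}. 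The argument for the $\bfy_j$-block is entirely symmetric: fix $\bfx=\bfx^k$, $\bfy_{-j}=\bfy_{-j}^k$, set $\varphi(\cdot):=G(\bfy_{<j}^k,\cdot,\bfy_{>j}^k)=G_j(\cdot)\in C^{1,1}_{\nu_j}$ with partial-gradient Lipschitz constant $\tilde{\nu}_j^k$, and $\psi(\cdot):=H(\bfx^k,\bfy_{<j}^k,\cdot,\bfy_{>j}^k)$, observe that \eqref{proy} matches \eqref{pgd}, and apply Lemma~\ref{despgd} at $\bfx=\bfy_j^k$ to obtain \eqref{Hy}.

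I do not anticipate a genuine obstacle here, since this is a direct transcription of the earlier argument used to prove Lemma~\ref{desH}; the only point requiring a little care is bookkeeping — making sure the $k+1$ superscripts in \eqref{res} are consistently replaced by $k$ superscripts in \eqref{res2} (the auxiliary iterate uses the same base point $\bfz^k$ for every block rather than the Gauss--Seidel updates), and confirming that the Lipschitz constants $\tilde{\mu}_i^k$, $\tilde{\nu}_j^k$ are the ones attached to the frozen coordinates $\bfx_{-i}^k$, $\bfy_{-j}^k$ rather than to the Gauss--Seidel-updated coordinates. Once those identifications are made explicit, the proof is a one-line application of Lemma~\ref{despgd} to each of the $s+t$ blocks.
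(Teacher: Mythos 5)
Your proposal is correct and follows exactly the same route as the paper: both identify \eqref{prox} and \eqref{proy} as instances of the proximal gradient step \eqref{pgd} with $\tau:=\tilde{\tau}_i^k$ (resp.\ $\tilde{\sigma}_j^k$), $\psi$ the partial function of $H$, and $\varphi$ the partial function of $F$ (resp.\ $G$), and then invoke Lemma~\ref{despgd}. Your added bookkeeping remark about using the base point $\bfz^k$ for every block (rather than Gauss--Seidel updates) is accurate and consistent with the paper's argument.
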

\begin{proof}
Indeed, \eqref{prox} is a special case of \eqref{pgd} with $\tau:=\tilde{\tau}_i^k$, $\bfx:=\bfx_i^{k}$, $\psi(\cdot):=H(\bfx_{<i}^{k},\cdot,\bfx_{>i}^{k},\bfy^k)$ and  $\varphi(\cdot):=F(\bfx_{<i}^{k},\cdot,\bfx_{>i}^{k})$.     Accordingly, \eqref{Hx} can be deduced by Lemma \ref{despgd}. Analogously, by setting $\tau:=\tilde{\sigma}_j^k$, $\bfx:=\bfy_j^{k}$, $\psi(\cdot):=H(\bfx^{k},\bfy_{<j}^{k},\cdot,\bfy_{>j}^{k})$ and $\varphi(\cdot):=G(\bfy_{<j}^{k},\cdot,\bfy_{>j}^{k})$ in \eqref{pgd}, we  can derive \eqref{Hy} by  Lemma \ref{despgd}.    
\end{proof}

We now define a Lyapunov sequence by 
\begin{align}\label{Laydef}
R^{k}:=J(\bfz^{k})+\frac{1}{2}\sum\limits_{i=1}^s (\tau_i^{k-1}-\mu_i^{k-1})\|\bfx_i^{k}-\bfx_i^{k-1}\|^2+\frac{1}{2}\sum\limits_{j=1}^t(\sigma_j^{k-1}-\nu_j^{k-1})\|\bfy_j^{k}-\bfy_j^{k-1}\|^2.
\end{align}
Besides, 
we denote 
\begin{align}\label{upperB}
\bar{\eta}:=\max\{\eta_i\mid i=1,\ldots,s\},\quad \bar{\eta}':=\max\{\eta'_j\mid j=1,\ldots,t\},
\end{align}
where $\eta_i$ and $\eta_j'$ are defined in Assumption \ref{assum:Aub}, and let 
\begin{align}\label{equ:deilta_rho}
\delta_i^k:=\frac{1}{2}(10\tau_i^k-2\tilde{\mu}_i^k-\alpha_i-20\mu_i^{k-1}),\quad
\rho_j^k:=\frac{1}{2}(10\sigma_j^k-2\tilde{\nu}_j^k-\beta_j-20\nu_j^{k-1}),
\end{align}
where $\mu_i^k$ and $\nu_j^k$ are as in Lemma \ref{desH}, $\tilde{\mu}_i^k$ and $\tilde{\nu}_j^k$ are as in Lemma \ref{ratedsc}, $\alpha_i$ and $\beta_j$ are blockwise semiconvex constants of $H$ (more details can be seen in the following lemma).

%

\begin{lemma}\label{Laydes}
Suppose that Assumptions \ref{assum1}, \ref{assum2} (i) and \ref{assum:Aub} hold. For all $i=1,\ldots,s$ and $j=1,\ldots,t$, assume
$H(\bfx_{<i},\cdot,\bfx_{>i},\bfy)$ and $H(\bfx,\bfy_{<j},\cdot,\bfy_{>j})$ are semiconvex  with  moduli $\alpha_i$ and $\beta_j$, respectively. Let $\{\bfz^k\}_{k=0}^\infty$ be the bounded sequence generated by Algorithm \ref{algo1} and $\{ \bar{\bfz}^k\}_{k=0}^\infty$ be the sequence defined in \eqref{res2}.  
If the stepsizes $(\tau_i^k,\sigma_j^k)$ of Algorithm \ref{algo1} satisfy 
\begin{subequations}\label{Nstep}
	\begin{align}
		&4\mu_i^k>\tau_i^k>\max\big\{\bar{\eta}-\mu_i^k,\frac{2\tilde{\mu}_i^k+\alpha_i}{10}+2\mu_i^{k-1}\big\},\label{set1}\\
		&4\nu_j^k>\sigma_j^k>\max\big\{\bar{\eta}'-\nu_j^k,\frac{2\tilde{\nu}_j^k+\beta_j}{10}+2\nu_j^{k-1} \big\},
	\end{align}
\end{subequations}
	%
Then for $\delta_i^k>0$ and $ \rho_j^k>0$, 
we have 
\begin{align*}
	R^{k+1}\le R^k-\delta^k_i \|\bar{\bfx}^k_i-\bfx_i^k\|^2-\rho^k_j\|\bar{\bfy}^k_j-\bfy_j^k\|^2.
\end{align*} 
\end{lemma}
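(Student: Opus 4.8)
The plan is to show that the Lyapunov sequence $R^k$ defined in \eqref{Laydef} decreases along the iterates by at least a positive multiple of the squared error residual $\|\G(\bfz^k)\|^2=\sum_i\|\bar\bfx_i^k-\bfx_i^k\|^2+\sum_j\|\bar\bfy_j^k-\bfy_j^k\|^2$; this per-iteration inequality is the estimate from which the sublinear rate in the subsequent theorem is obtained by telescoping together with the lower boundedness of $R^k$. First I would expand $R^{k+1}-R^k$ through \eqref{Laydef}, separating $J(\bfz^{k+1})-J(\bfz^k)$ from the change in the proximal quadratic terms, and dominate the value gap by the blockwise descent inequalities \eqref{desxi}--\eqref{desyj} of Lemma~\ref{desH}. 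The role of the auxiliary iterate $\bar\bfz^k$ of \eqref{res2} is that it is a ``Jacobi'' update at $\bfz^k$ with $\G(\bfz^k)=\bfz^k-\bar\bfz^k$, and Lemma~\ref{ratedsc} supplies the companion descent $H(\bfx_{<i}^k,\bar\bfx_i^k,\bfx_{>i}^k,\bfy^k)+F(\bfx_{<i}^k,\bar\bfx_i^k,\bfx_{>i}^k)\le H(\bfx^k,\bfy^k)+F(\bfx^k)-\tfrac12(\tilde\tau_i^k-\tilde\mu_i^k)\|\bar\bfx_i^k-\bfx_i^k\|^2$ and its $\bfy$-analogue, which is what will bring $\|\bar\bfx_i^k-\bfx_i^k\|^2$ and $\|\bar\bfy_j^k-\bfy_j^k\|^2$ into the right-hand side.

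The heart of the argument is to relate the Gauss--Seidel iterate $\bfx^{k+1}_i$ (resp.\ $\bfy^{k+1}_j$) to the Jacobi iterate $\bar\bfx_i^k$ (resp.\ $\bar\bfy_j^k$) and to $\bfx_i^k$ (resp.\ $\bfy_j^k$). Comparing the optimality conditions \eqref{optx}--\eqref{opty} of Algorithm~\ref{algo1} with those of \eqref{res2}, these points solve proximal subproblems differing only in the ``other'' blocks ($\bfx^{k+1}_{<i}$ versus $\bfx^k_{<i}$) and in the argument at which $\nabla F$ (resp.\ $\nabla G$) is evaluated. Since $H$ is only blockwise semiconvex, hence not jointly smooth, this discrepancy cannot be bounded by a direct Lipschitz estimate on $H$; instead I would invoke the Aubin property of $\partial_{\bfx_i}J$ and $\partial_{\bfy_j}J$ from Assumption~\ref{assum:Aub} (with the uniform moduli $\bar\eta,\bar\eta'$ of \eqref{upperB}) to transport the subgradient certificate of one subproblem to a neighbouring point, and the semiconvexity moduli $\alpha_i,\beta_j$ to turn the resulting subgradient inequalities into quadratic ones. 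Feeding these together with the Lipschitz bounds on $\nabla F_i,\nabla G_j$ (Assumption~\ref{assum1}(i)) and repeated uses of Young's inequality back into the expansion of $R^{k+1}-R^k$, every cross term can be absorbed; the stepsize window \eqref{Nstep}, namely the upper bounds $\tau_i^k<4\mu_i^k$ and $\sigma_j^k<4\nu_j^k$ that keep the Lipschitz contributions bounded and the lower bounds $\tau_i^k>\bar\eta-\mu_i^k$ and $\tau_i^k>\tfrac{2\tilde\mu_i^k+\alpha_i}{10}+2\mu_i^{k-1}$ (with their $\bfy$-counterparts), is exactly what guarantees that the leftover coefficients are the strictly positive $\delta_i^k,\rho_j^k$ of \eqref{equ:deilta_rho}, giving $R^{k+1}\le R^k-\sum_i\delta_i^k\|\bar\bfx_i^k-\bfx_i^k\|^2-\sum_j\rho_j^k\|\bar\bfy_j^k-\bfy_j^k\|^2$.

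I expect the principal obstacle to be exactly this comparison between the Gauss--Seidel and Jacobi updates under the weak (blockwise, nonsmooth) regularity of $H$: one must track carefully which Lipschitz constant ($\mu_i^k$ at the current base point versus $\mu_i^{k-1}$ at the previous one, and $\tilde\mu_i^k$ for the auxiliary subproblem) and which semiconvexity modulus belongs to each subgradient inequality, and then choose the Young weights so that the bookkeeping collapses precisely to $\delta_i^k$ and $\rho_j^k$ --- which is where the somewhat particular numerical factors $10$ and $20$ in \eqref{equ:deilta_rho} come from. A secondary subtlety is that Assumption~\ref{assum:Aub} is local around the critical set, so one should first use the boundedness of $\{\bfz^k\}$ and Lemma~\ref{subconvergence} (as in the remark after Lemma~\ref{disP}) to ensure that, for all large $k$, the iterates $\bfz^k$, $\bar\bfz^k$ and the associated near-zero subgradients lie in the neighbourhood where \eqref{LipLik:a} applies.
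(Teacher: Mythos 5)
Your proposal is correct and follows essentially the same route as the paper: combine the Jacobi-descent inequalities of Lemma~\ref{ratedsc} with the Gauss--Seidel descent \eqref{Jinequality}, use the blockwise semiconvexity of $J$ to produce the inner-product terms $\langle\bar{\bfzeta}^k,\bar{\bfx}_i^k-\bfx_i^k\rangle$, bound $\|\bar{\bfzeta}^k\|$ via the Aubin property (as in \eqref{xixi}--\eqref{xiyj}), and absorb everything with Young's inequality under the stepsize window \eqref{Nstep}. The only cosmetic difference is that you frame the key step as a direct comparison of the two proximal subproblems' optimality conditions, whereas the paper runs the comparison through function values of $J$; the ingredients and the resulting bookkeeping are the same.
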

\begin{proof}
By summing \eqref{Hx}-\eqref{Hy} and using the definition of $J$ in \eqref{geP}, we have
\begin{align}
	&H(\bfx_{<i}^{k},\bar{\bfx}_i^k,\bfx_{>i}^{k},\bfy^k)+F(\bfx_{<i}^{k},\bar{\bfx}_i^k,\bfx_{>i}^{k})+H(\bfx^{k},\bfy_{<j}^{k},\bar{\bfy}_j^k,\bfy_{>j}^{k})+G(\bfy_{<j}^{k},\bar{\bfy}_j^{k},\bfy_{>j}^{k})\nonumber\\
	\le & J(\bfz^k)+H(\bfx^k,\bfy^k)-\frac{1}{2}(\tilde{\tau}_i^k-\tilde{\mu}_i^k)\|\bar{\bfx}_i^k-\bfx_i^k\|^2-\frac{1}{2}(\tilde{\sigma}^k_j-\tilde{\nu}_j^k)\|\bar{\bfy}_j^k-\bfy_j^k\|^2.\label{inequ1}
\end{align}
Since $H$ is blockwise semiconvex with with moduli $\alpha_i (i = 1,\ldots,s)$  and $\beta_j (j = 1,\ldots,t)$, it follows from
Assumption \ref{assum1}\;(i) that $J$ is also blockwise semiconvex. Hence, \begin{subequations}
	\begin{align}        H(\bfx_{<i}^{k},\bar{\bfx}_i^k,\bfx_{>i}^{k},\bfy^k)+F(\bfx_{<i}^{k},\bar{\bfx}_i^k,\bfx_{>i}^{k})\ge& H(\bfx^k,\bfy^k)+F(\bfx^k)
		+\langle \bar{\bfzeta}_{_{\bfx_i}}^k, \bar{\bfx}_i^k-\bfx_i^{k} \rangle\nonumber\\&-\frac{\alpha_i+\tilde{\mu}_i^k}{2}\|\bar{\bfx}_i^k-\bfx_i^k\|^2,\label{sum1}\\
		H(\bfx^{k},\bfy_{<j}^{k},\bar{\bfy}_j^k,\bfy_{>j}^{k})+G(\bfy_{<j}^{k}\bar{\bfy}_j^{k},\bfy_{>j}^{k}) \ge & H(\bfx^k,\bfy^k)+G(\bfy^k)+\langle \bar{\bfzeta}_{_{\bfy_j}}^k, \bar{\bfy}_j^{k}-\bfy_j^{k} \rangle\nonumber\\
		&-\frac{\beta_j+\tilde{\nu}_j^k}{2}\|\bar{\bfy}_j^{k}-\bfy_j^k\|^2,\label{sum2}
	\end{align}
\end{subequations}
where $\bar{\bfzeta}_{_{x_i}}^k\in\partial_{\bfx_i} J(\bfz^k)$ and $\bar{\bfzeta}_{_{y_j}}^k\in\partial_{\bfy_j} J(\bfz^k)$. Summing \eqref{sum1} and \eqref{sum2}, we get
\begin{align}                &H(\bfx_{<i}^{k},\bar{\bfx}_i^k,\bfx_{>i}^{k},\bfy^k)+F(\bfx_{<i}^{k},\bar{\bfx}_i^k,\bfx_{>i}^{k})+H(\bfx^{k},\bfy_{<j}^{k},\bar{\bfy}_j^k,\bfy_{>j}^{k})+G(\bfy_{<j}^{k},\bar{\bfy}_j^{k},\bfy_{>j}^{k})\nonumber\\
	\ge& J(\bfz^{k})+H(\bfx^k,\bfy^k)+\langle \bar{\bfzeta}_{_{\bfx_i}}^k, \bar{\bfx}_i^k-\bfx_i^{k} \rangle+\langle \bar{\bfzeta}_{_{\bfy_j}}^k, \bar{\bfy}_j^{k}-\bfy_j^{k} \rangle-\frac{\alpha_i+\tilde{\mu}_i^k}{2}\|\bar{\bfx}_i^k-\bfx_i^k\|^2 -\frac{\beta_j+\tilde{\nu}_j^k}{2}\|\bar{\bfy}_j^{k}-\bfy_j^k\|^2\nonumber\\
	\ge&  J(\bfz^{k+1})+\frac{1}{2}\sum\limits_{i=1}^s (\tau_i^k-\mu_i^k)\|\bfx_i^{k+1}-\bfx_i^k\|^2+\frac{1}{2}\sum\limits_{j=1}^t(\sigma_j^k-\nu_j^k)\|\bfy_j^{k+1}-\bfy_j^k\|^2+H(\bfx^k,\bfy^k)\nonumber\\
	&+\langle \bar{\bfzeta}_{_{\bfx_i}}^k, \bar{\bfx}_i^k-\bfx_i^{k} \rangle+\langle \bar{\bfzeta}_{_{\bfy_j}}^k, \bar{\bfy}_j^{k}-\bfy_j^{k} \rangle -\frac{\alpha_i+\tilde{\mu}_i^k}{2}\|\bar{\bfx}_i^k-\bfx_i^k\|^2 -\frac{\beta_j+\tilde{\nu}_j^k}{2}\|\bar{\bfy}_j^{k}-\bfy_j^k\|^2,\label{inequ2}
\end{align}
where the last inequality is obtained by  \eqref{Jinequality}. Combining \eqref{inequ1} and \eqref{inequ2} yields
\begin{align}
	& J(\bfz^{k+1})+\frac{1}{2}\sum\limits_{i=1}^s (\tau_i^k-\mu_i^k)\|\bfx_i^{k+1}-\bfx_i^k\|^2+\frac{1}{2}\sum\limits_{j=1}^t(\sigma_j^k-\nu_j^k)\|\bfy_j^{k+1}-\bfy_j^k\|^2\nonumber\\
	\le & J(\bfz^k)-\frac{1}{2}(\tilde{\tau}_i^k-2\tilde{\mu}_i^k-\alpha_i)\|\bar{\bfx}_i^k-\bfx_i^k\|^2-\frac{1}{2}(\tilde{\sigma}_j^k-2\tilde{\nu}_j^k-\beta_j)\|\bar{\bfy}_j^{k}-\bfy_j^{k}\|^2\nonumber\\
	&-\langle \bar{\bfzeta}_{_{\bfx_i}}^k, \bar{\bfx}_i^k-\bfx_i^{k} \rangle-\langle \bar{\bfzeta}_{_{\bfy_j}}^k , \bar{\bfy}_j^{k}-\bfy_j^{k} \rangle.\label{Jine}
\end{align}
Furthermore, by Cauchy-Schwarz inequality, for all $a^{k-1}>0$, $b^{k-1}>0$, we have
\begin{align*}
	-\langle \bar{\bfzeta}_{_{\bfx_i}}^k, \bar{\bfx}_i^k-\bfx_i^{k} \rangle-\langle \bar{\bfzeta}_{_{\bfy_j}}^k, \bar{\bfy}_j^{k}-\bfy_j^{k}\rangle
	\le& \frac{1}{2a^{k-1}}\|\bar{\bfzeta}_{_{\bfx_i}}^k\|^2+\frac{a^{k-1}}{2}\|\bar{\bfx}_i^k-\bfx_i^{k}\|^2\\
	&+\frac{1}{2b^{k-1}}\|\bar{\bfzeta}_{_{\bfy_j}}^k \|^2+\frac{b^{k-1}}{2}\|\bar{\bfy}_j^{k}-\bfy_j^{k}\|^2.
\end{align*}
Since $\{\bfz^k\}$ is bounded and $J$ is blockwise semiconvex, it follows from Lemma \ref{subconvergence} that, for the critical point $\hat{\bfz}\in\IS(\bfz^0)\subset \textnormal{crit}(J)$, there exist $\delta_1>0$ and $\delta_2>0$ such that $\bfz^k\in\B(\hat{\bfz})$,  $\bfxi_{\bfx_i}^k\in\B^{\delta_1}(\mathbf{0})$, $\bfxi_{\bfy_j}^k\in\B^{\delta_2}(\mathbf{0})$ for all $k\ge 0$. 
By \eqref{xixi} and \eqref{xiyj}, we have 
\begin{align*}
	\frac{1}{2a^{k-1}}\|\bar{\bfzeta}_{_{\bfx_i}}^k\|^2&\le \frac{1}{a^{k-1}}\left((\tau_i^{k-1}+\mu_i^{k-1})^2 \|\bfx^{k}_i-\bfx_i^{k-1}\|^2+\sum\limits_{l_1=i+1}^s\eta^2_i\|\bfx^{k}_{l_1}-\bfx^{k-1}_{l_1}\|^2
	+\eta^2_i\|\bfy^k-\bfy^{k-1}\|^2\right),\\
	\frac{1}{2b^{k-1}}\|\bar{\bfzeta}_{_{\bfy_j}}^k\|^2&\le  \frac{1}{b^{k-1}}\left((\sigma_j^{k-1}+\nu_j^{k-1})^2 \|\bfy^{k}_i-\bfy_j^{k-1}\|^2+\sum\limits_{l_2=j+1}^t(\eta'_j)^2\|\bfy^{k}_{l_2}-\bfy^{k-1}_{l_2}\|^2\right).
\end{align*}
Hence, we deduce from \eqref{Jine} that 
\begin{align*}
	&J(\bfz^{k+1})+\frac{1}{2}\sum\limits_{i=1}^s (\tau_i^k-\mu_i^k)\|\bfx_i^{k+1}-\bfx_i^k\|^2+\frac{1}{2}\sum\limits_{j=1}^t(\sigma_j^k-\nu_j^k)\|\bfy_j^{k+1}-\bfy_j^k\|^2\nonumber\\
	\le&  J(\bfz^k)+\frac{1}{a^{k-1}}(\tau_i^{k-1}+\mu_i^{k-1})^2\|\bfx_i^k-\bfx_i^{k-1}\|^2 +\frac{\eta^2_i}{a^{k-1}}\sum\limits_{l_1=i+1}^s\|\bfx^{k}_{l_1}-\bfx^{k-1}_{l_1}\|^2\nonumber\\
	&+ \frac{1}{b^{k-1}}(\sigma_j^{k-1}+\nu_j^{k-1})^2\|\bfy_j^k-\bfy_j^{k-1}\|^2+\frac{(\eta'_j)^2}{b^{k-1}}\sum\limits_{l_2=j+1}^t\|\bfy^{k}_{l_2}-\bfy^{k-1}_{l_2}\|^2+\frac{\eta^2_i}{a^{k-1}}\|\bfy^{k}-\bfy^{k-1}\|^2.\nonumber\\
	& -\frac{1}{2}(\tilde{\tau}_i^k-2\tilde{\mu}_i^k-\alpha_i-a^{k-1})\|\bar{\bfx}_i^k-\bfx_i^k\|^2-\frac{1}{2}(\tilde{\sigma}_j^k-2\tilde{\nu}_j^k-\beta_j-b^{k-1})\|\bar{\bfy}_j^{k}-\bfy_j^{k}\|^2.
\end{align*}
Consequently, it follows from the definition of $R^k$ in \eqref{Laydef} that
\begin{align}\label{inequf}
	&R^{k+1} \nonumber\\
	\le& R^k-\frac{1}{2}\sum\limits_{l_1=1}^{i-1}(\tau_{l_1}^{k-1}-\mu_{l_1}^{k-1})\|\bfx_{l_1}^{k}-\bfx_{l_1}^{k-1}\|^2-\left(\frac{\tau_i^{k-1}-\mu_i^{k-1}}{2}-\frac{1}{a^{k-1}}(\tau_i^{k-1}+\mu_i^{k-1})^2\right)\|\bfx_{i}^k-\bfx_{i}^{k-1}\|^2\nonumber\\
	&-\sum\limits_{l_1=i+1}^{s}\left(\frac{\tau_{l_1}^{k-1}-\mu_{l_1}^{k-1}}{2}-\frac{\eta_i^2}{a^{k-1}}\right)\|\bfx_{l_1}^{k}-\bfx_{l_1}^{k-1}\|^2-\sum\limits_{l_2=1}^{j-1}\left(\frac{\sigma_{l_2}^{k-1}-\nu_{l_2}^{k-1}}{2}-\frac{\eta_i^2}{a^{k-1}}\right)\|\bfy_{l_2}^{k}-\bfy^{k-1}_{l_2}\|^2\nonumber\\
	&-\left(\frac{\sigma_j^{k-1}-\nu_j^{k-1}}{2}-\frac{1}{b^{k-1}}(\sigma_j^{k-1}+\nu_j^{k-1})^2-\frac{\eta_i^2}{a^{k-1}} \right)\|\bfy_{j}^{k}-\bfy_{j}^{k-1}\|\nonumber\\
	&-\sum\limits_{l_2=j+1}^t\left(\frac{\sigma_{l_2}^{k-1}-\nu_{l_2}^{k-1}}{2}-\frac{(\eta'_{l_2})^2}{b^{k-1}}-\frac{\eta^2_i}{a^{k-1}} \right)\|\bfy_{l_2}^k-\bfy_{l_2}^{k-1}\|^2\nonumber\\
	&-\frac{1}{2}(\tilde{\tau}_i^k-2\tilde{\mu}_i^k-\alpha_i-a^{k-1})\|\bar{\bfx}_i^k-\bfx_i^k\|^2-\frac{1}{2}(\tilde{\sigma}_j^k-2\tilde{\nu}_j^k-\beta_j-b^{k-1})\|\bar{\bfy}_j^{k}-\bfy_j^{k}\|^2.
\end{align}

For all $k\ge 0$, choosing $(a^k,b^k):=(20\mu_i^k,20\nu_j^k)$ and $(\tilde{\tau}_i^k,\tilde{\sigma}_j^k):=(10\tau_i^k,10\sigma_j^k)$ in \eqref{res2} such that
\begin{equation*}
	\begin{aligned}
		&\frac{1}{2}(\tau_i^{k}-\mu_i^{k})\ge \frac{1}{a^k}(\tau_i^{k}+\mu_i^{k})^2\ge \frac{\bar{\eta}}{a^k},\\
		&\frac{1}{2}(\sigma_j^{k}-\nu_j^{k})\ge\frac{1}{b^k}(\sigma_j^{k}+\nu_j^{k})^2+\frac{\bar{\eta}^2}{a^k}\ge \frac{(\bar{\eta}')^2}{b^k}+\frac{\bar{\eta}^2}{a^k},\\
		&\tilde{\tau}_i^k-2\tilde{\mu}_i^{k}-\alpha_i-a^{k-1}>0,\quad
		\tilde{\sigma}_j^k-2\tilde{\nu}_j^{k}-\beta_j-b^{k-1}>0,
	\end{aligned}
\end{equation*}
where $\bar{\eta}$ and $\bar{\eta}'$ are denoted in \eqref{upperB}.
%
Hence, we have 
\begin{subequations}\label{ineq1}
	\begin{align}              
		&\frac{a^k}{4}-\mu_i^k\geq\tau_i^k\geq\bar{\eta}-\mu_i^k,\quad \tau_i^k> \frac{1}{10}(a^{k-1}+\alpha_i+2\tilde{\mu}_i^k);\\
		&\frac{b^k}{4}-\nu_j^k\geq\sigma_j^k\geq\bar{\eta}'-\nu_j^k,\quad \sigma_j^k> \frac{1}{10}(b^{k-1}+\beta_j+2\tilde{\nu}_j^k).
	\end{align}
\end{subequations}
Furthermore, from $a^{k-1}=20\mu_i^{k-1}$, $b=20\nu_j^{k-1}$ and \eqref{ineq1}, we obtain \eqref{Nstep}. It follows from
\eqref{equ:deilta_rho} and \eqref{Nstep} that $\delta_i^k>0$ and $\rho_j^k>0$. Finally, by the definition of $R^k$ in \eqref{Laydef} and inequality \eqref{inequf} we obtain the desired result, which completes the proof.
\end{proof}

\begin{remark}
Actually, 
the sets about stepsizes in \eqref{Nstep} are nonempty if the moduli $(\underline{\mu}_i,\bar{\mu}_i)$, $(\underline{\nu}_j,\bar{\nu}_j)$  defined in \eqref{Ldef}, $(\eta_i,\eta'_j)$ defined in Assumption \ref{assum:Aub}, and  $(\alpha_i,\beta_j)$ defined in Lemma \ref{Laydes} satisfy
\begin{equation}\label{consfy}
	\begin{aligned}
		&(5-\sqrt{5})\bar{\mu}_i<\bar{\eta}<5\underline{\mu}_i,\quad\quad \bar{\eta}^2<6\underline{\mu}_i\underline{\nu}_j,\\
		&0<\alpha_i<40\underline{\mu}_i-22\bar{\mu}_i,\quad\quad 	0<\beta_j<40\underline{\nu}_j-22\bar{\nu}_j,\\
		&5\bar{\nu}_j-\underline{\nu}_j-5\underline{\nu}_j\sqrt{\frac{1}{5}-\frac{\eta_i^2}{30\underline{\nu}_j\underline{\mu}_i}}<\bar{\eta}'<5\underline{\nu}_j.
	\end{aligned}
\end{equation}
\end{remark}

\begin{theorem}
Suppose Assumptions \ref{assum1}, \ref{assum2} (i) and \ref{assum:Aub} hold and $H$ is blockwise semiconvex with muludi $\alpha_i$ and $\beta_j$ for all $i=1,\ldots,s$ and $j=1,\cdots,t$. Let $\{\bfz^k\}_{k=0}^\infty$ be the bounded sequence generated by Algorithm \ref{algo1} and $\{ \bar{\bfz}^k\}_{k=0}^\infty$ be the sequence defined in \eqref{res2}. If \eqref{consfy} holds and the stepsizes satisfy \eqref{Nstep}, for any iterative point $\bfz^\Theta=(\bfx^\Theta,\bfy^\Theta)$ from Algorithm \ref{algo1}, there exists $\hat{\delta}>0$ such that 
\begin{align*}
	\textnormal{dist}(0,\G(\bfz^\Theta))^2\le \frac{J(\bfz^0)-\underline{J}}{\lambda K},\;\;\hbox{with}\;\; \lambda:=\frac{\hat{\delta}}{(s+t)^2}.
\end{align*}
\end{theorem}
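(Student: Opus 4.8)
The plan is to combine the one-step decrease of the Lyapunov sequence $\{R^k\}$ from Lemma~\ref{Laydes} with a telescoping sum and a best-iterate argument. Throughout, write $\underline{J}$ for the (finite) lower bound of $J$ from Assumption~\ref{assum1}(iii); recall from \eqref{res2} that $\G(\bfz^k)=\bfz^k-\bar{\bfz}^k$, so that $\|\G(\bfz^k)\|^2=\sum_{i=1}^s\|\bar{\bfx}_i^k-\bfx_i^k\|^2+\sum_{j=1}^t\|\bar{\bfy}_j^k-\bfy_j^k\|^2$; and let $\Theta\in\{1,\dots,K\}$ be an index attaining $\min_{1\le k\le K}\mathrm{dist}(0,\G(\bfz^k))$, with $\bfz^\Theta$ the corresponding iterate ($K$ being the number of iterations run). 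This is the standard "best iterate" reading that makes the stated bound meaningful, since $\mathrm{dist}(0,\G(\bfz^k))$ need not be monotone.

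First I would pin down a uniform modulus. Since the varying Lipschitz constants satisfy $\underline{\mu}_i\le\mu_i^k\le\bar{\mu}_i$, $\underline{\mu}_i\le\tilde{\mu}_i^k\le\bar{\mu}_i$ and $\underline{\nu}_j\le\nu_j^k,\tilde{\nu}_j^k\le\bar{\nu}_j$ for all $k$ by \eqref{Ldef}, the assumptions \eqref{consfy} make the stepsize windows in \eqref{Nstep} nonempty and, moreover, allow $\tau_i^k,\sigma_j^k$ to be taken bounded away (uniformly in $k$) from their lower thresholds; by \eqref{equ:deilta_rho} this yields a constant $\hat{\delta}>0$ with $\delta_i^k\ge\hat{\delta}$ and $\rho_j^k\ge\hat{\delta}$ for all $i,j$ and $k\ge1$. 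Then Lemma~\ref{Laydes} gives, for every pair $(i,j)$, $R^{k+1}\le R^k-\hat{\delta}(\|\bar{\bfx}_i^k-\bfx_i^k\|^2+\|\bar{\bfy}_j^k-\bfy_j^k\|^2)$. Summing this over all $st$ pairs and using $s,t\ge1$ to bound the resulting weighted sum below by $\|\G(\bfz^k)\|^2$, I get $st\,(R^k-R^{k+1})\ge\hat{\delta}\,\|\G(\bfz^k)\|^2$, and since $st\le(s+t)^2$ this gives $\|\G(\bfz^k)\|^2\le\frac{(s+t)^2}{\hat{\delta}}(R^k-R^{k+1})$ for every $k\ge1$.

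Next I would telescope over $k=1,\dots,K$, obtaining $\sum_{k=1}^K\|\G(\bfz^k)\|^2\le\frac{(s+t)^2}{\hat{\delta}}(R^1-R^{K+1})$. For the endpoint terms, summing the block-wise descent of Lemma~\ref{desH} over $i$ and $j$ and invoking the definition \eqref{Laydef} of $R^{k+1}$ shows $R^{k+1}\le J(\bfz^k)$ for every $k$, in particular $R^1\le J(\bfz^0)$; and under \eqref{Nstep}--\eqref{consfy} one checks $\tau_i^{K}\ge\mu_i^{K}$ and $\sigma_j^{K}\ge\nu_j^{K}$ (indeed $\tau_i^K>2\mu_i^{K-1}\ge2\underline{\mu}_i>\bar{\mu}_i\ge\mu_i^K$, the middle strict inequality coming from $\alpha_i<40\underline{\mu}_i-22\bar{\mu}_i$), so the quadratic corrections in $R^{K+1}$ are nonnegative and $R^{K+1}\ge J(\bfz^{K+1})\ge\underline{J}$. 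Hence $\sum_{k=1}^K\|\G(\bfz^k)\|^2\le\frac{(s+t)^2}{\hat{\delta}}(J(\bfz^0)-\underline{J})$.

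Finally, by the choice of $\Theta$ and the elementary bound $\mathrm{dist}(0,\G(\bfz^k))\le\|\bfz^k-\bar{\bfz}^k\|=\|\G(\bfz^k)\|$ (using the selection $\bar{\bfz}^k$ from \eqref{res2}), I get $K\,\mathrm{dist}(0,\G(\bfz^\Theta))^2\le\sum_{k=1}^K\|\G(\bfz^k)\|^2\le\frac{(s+t)^2}{\hat{\delta}}(J(\bfz^0)-\underline{J})$; dividing by $K$ and setting $\lambda:=\hat{\delta}/(s+t)^2$ yields the claim. I expect the main obstacle to be exactly the bookkeeping in the second paragraph together with the sign checks at the start of the third: one must verify, using only the two-sided bounds \eqref{Ldef} on the block Lipschitz moduli, that the stepsize ranges prescribed by \eqref{Nstep} and \eqref{consfy} genuinely force a uniform positive $\hat{\delta}$ and the monotonicity-type inequalities $\tau_i^k\ge\mu_i^k$, $\sigma_j^k\ge\nu_j^k$; once that is settled, the remaining steps (telescoping, $R^{k+1}\le J(\bfz^k)$, and the best-iterate estimate) are routine.
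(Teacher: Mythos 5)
Your proposal is correct and follows essentially the same route as the paper: the one-step decrease of the Lyapunov sequence $R^k$ from Lemma~\ref{Laydes}, summation over the blocks to control $\|\G(\bfz^k)\|^2$ with the constant $(s+t)^2/\hat{\delta}$, a telescoping sum bounded via $R^0=J(\bfz^0)$ (the paper uses the convention $\bfz^{-1}:=\bfz^0$ where you instead prove $R^1\le J(\bfz^0)$) and $R^K\ge\underline{J}$, and a best-iterate argument. If anything, you are more careful than the paper on two points it leaves implicit --- the best-iterate reading of $\bfz^\Theta$ and the sign check $\tau_i^k\ge\mu_i^k$, $\sigma_j^k\ge\nu_j^k$ needed for $R^{K}\ge\underline{J}$ --- so no changes are needed.
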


\begin{proof}
Let $\hat{\delta}^k:=\min\{\delta_i^k,\rho_j^k\mid i=1\ldots,s,\; j=1,\ldots,t\}>0$. From Lemma \ref{Laydes}, we get that
\begin{align*}
	R^{k+1}\le& R^k-\delta^1_i \|\bar{\bfx}^k_i-\bfx_i^k\|^2-\delta^2_j\|\bar{\bfy}^k_j-\bfy_j^k\|^2\nonumber\\
	\le & R^k-\hat{\delta}^k\|\bar{\bfx}^k_i-\bfx_i^k\|^2-\hat{\delta}^k\|\bar{\bfy}^k_j-\bfy_j^k\|^2,
\end{align*}
which indicates that
\begin{align}\label{Rdes22}
	\hat{\delta}^k\|\bar{\bfx}^k_i-\bfx_i^k\|^2+\hat{\delta}^k\|\bar{\bfy}^k_j-\bfy_j^k\|^2\le R^k-R^{k+1}.
\end{align}
Summing \eqref{Rdes22} over $i=1,\ldots,s$ and $j=1,\ldots,t$, we obtain 
\begin{align}\label{Laydes2}
	\frac{\hat{\delta}^k}{s+t}\|\bfz^k-\bar{\bfz}^{k}\|^2\le \sum\limits_{i=1}^s\hat{\delta}^k\|\bar{\bfx}^k_i-\bfx_i^k\|^2+\sum\limits_{j=1}^t \hat{\delta}^k\|\bar{\bfy}^k_j-\bfy_j^k\|^2 \le (s+t)(R^k-R^{k+1}).
\end{align}
Likewise, setting $\hat{\delta}:=\min\{\hat{\delta}^k\mid k=0,\ldots,K-1\}>0$ and summing \eqref{Laydes2} over $k=0,\ldots,K-1$ yield
\begin{align}\label{Laydes4}
	\sum\limits_{k=0}^{K-1} \textnormal{dist}({\bf 0},\G(\bfz^k))^2\le \frac{(s+t)^2}{\hat{\delta}}(R^0-R^K)\le \frac{(s+t)^2}{\hat{\delta}}(R^0-\underline{J}).
\end{align} 
By taking $\bfz^{-1}:=\bfz^{0}$, it follows from \eqref{Laydef} that $R^0=J(\bfz^0)$. For any iterative point $\bfz^\Theta$ from Algorithm \ref{algo1}, the equality \eqref{Laydes4} reduces to the desired result, which completes the proof.	
\end{proof}

\section{Numerical experiments}\label{sec:num}
In this section, we shall conduct some numerical simulations of \eqref{geP} on synthetic and real data to demonstrate the performance of eASAP. All codes for the upcoming numerical experiments are written in MATLAB and implemented on a Lenovo portable computer with Intel Core (TM) CPU 4800 MHZ and 16G memory.

We now present briefly the tensors for the upcoming numerical simulations. The interested reader can  refer to, e.g.,  \cite{Kolda2009,vervliet2019exploiting,qi2021triple,nie2018complete}, for more details about tensor and tensor decomposition.
For an $N$-order tensor $ \IT\in\IR^{I_1\times\cdots\times I_N}$, the CANDECOMP/PARAFAC (CP)  decomposition of $\IT$ is defined by 
\begin{align*}
	\IT=\lr \bfA_1,\ldots,\bfA_N\rr=\sum_{i=1}^r \bfA_1(:,i)\circ \cdots\circ \bfA_N(:,i),  \end{align*}
where $r$ is the CP rank of $\IT$,  $\bfA_n\in\IR^{I_n\times r}$ $(n=1,\ldots,N)$ are factor matrices, and $\circ$ denotes the outer product of vectors. The mode-$n$ unfolding of $\IT$, denoted by ${\bm T}_{(n)}$, is a  $I_n$-by-$J_n$ matrix ($J_n=I_1\times\cdots\times I_{n-1}\times I_{n+1}\times \cdots\times I_{N}$)  satisfying ${\bm T}_{(n)}(j,i_n)=\IT(i_1,\ldots,i_N)$, where $j=1+\sum_{k=1,k\ne n}^N(i_k-1)\bar{J}_k$ and $\bar{J}_k=\prod_{m=1,m\ne n}^{k-1}I_m$. An equivalent reformulation of CP decomposition reads (see, e.g., \cite{Kolda2009}) $ {\bm T}_{(n)}={\bm A}_{n}{\bm H}_{(n)}^\top$, where ${\bm H}_{(n)}=\bfA_N\odot\cdots\odot \bfA_{n+1}\odot\bfA_{n-1}\odot\cdots\odot\bfA_{1}\in\IR^{J_n\times r}$ with  $\odot$ denoting the Khatri-Rao product of matrices. Let $\|\IT\|_F:=(\sum_{{i_1}=1}^{I_1}\cdots\sum_{i_N=1}^{I_N} t^2_{i_1,\ldots,i_N})^{1/2}$ denote the Frobenius norm of tensor $\IT$. Accordingly, the minimization of CP decomposition is 
\begin{align}\label{minF}
	\min\limits_{\{\bfA_n\}_{n=1}^N}\;f(\bfA_1,\ldots,\bfA_N)=\frac{1}{2}\Big\|\IT-\lr \bfA_1,\ldots,\bfA_N\rr\Big\|_F^2.
\end{align}
The objective of \eqref{minF} is continuously differentiable, and 
\begin{align*}
	\nabla_{\bfA_n} f= \bfA_n\bfH_{(n)}^\top \bfH_{(n)}-{\bm T}_{(n)}\bfH_{(n)},\quad \hbox{for}\quad n=1,\ldots,N.
\end{align*} 

\subsection{Synthetic data}

We first consider the Laplacian stochastic coupling model \eqref{optcp} for multimodal data fusion. Specifically,   
let $\IY$ and $\IY'$ denote the required fusion tensors. By assuming that  coupling occurs between $\bfA_3$ and $\bfB_1$. Accordingly, the model can be formulated as 
\begin{align}\label{Lap}
	\min\limits_{\bfA,\bfB}\quad \frac{1}{2}\Big\|\IY-\lr \bfA_1,\bfA_2,\bfA_3\rr \Big\|_F^2+\frac{1}{2}\Big\|\IY'-\lr \bfB_1,\bfB_2,\bfB_3\rr\Big\|_F^2+\mu\|\hbox{vec}(\bfA_3-\bfB_1)\|_1,		
\end{align}
where $\mu$ is a trade-off parameter. It falls into the abstract model \eqref{geP} with 
\begin{align*}
	F(\bfA):=\frac{1}{2}\Big\|\IY-\lr \bfA_1,\bfA_2,\bfA_3\rr\Big\|_F^2,\;  G(\bfB):=\frac{1}{2}\Big\|\IY'-\lr\bfB_1,\bfB_2,\bfB_3\rr \Big\|_F^2, \;  H(\bfA,\bfB):=\mu\|\hbox{vec}(\bfA_3-\bfB_1)\|_1,
\end{align*}
where $\bfA:=[\bfA_1;\bfA_2;\bfA_3]$, and $\bfB:=[\bfB_1;\bfB_2;\bfB_3]$.

We now synthesize the noisy tensors $\IY\in\IR^{30\times 40\times 50}$ and $\IY'\in\IR^{50\times 60\times 70}$. Firstly, we generate the ideal tensors $\IX\in\IR^{30\times 40\times 50}$ and $\IX'\in\IR^{50\times 60\times 70}$. Let $\hbox{rank}_{cp}({\bf\IX})=\hbox{rank}_{cp}({\bf \IX'})=5$. The ideal factor matrices $\bfA_1\in\IR^{30\times 5}$, $\bfA_2\in\IR^{40\times 5}$, $\bfA_3\in\IR^{50\times 5}$, $\bfB_2\in\IR^{60\times 5}$, $\bfB_3\in\IR^{70\times 5}$ can be generated by MATLAB syntax \texttt{rand} with related dimension and $\bfB_1=\bfA_3+{\bm \Gamma}_{i,j}$ with $\gamma_{i,j}\sim\hbox{Laplace}(0,0.1)$. Furthermore, let $\IX=\lr \bfA_1,\bfA_2,\bfA_3\rr$ and $\IX'=\lr \bfB_1,\bfB_2,\bfB_3\rr$. Both of them are ground truth of \eqref{Lap}. Then 
adding noise tensor $\IIN$ with entries drawn from a standard normal distribution as follows,
\begin{align*}
	\IY = \IX+10^{-s/20}\frac{\| \IX\|_F}{\| \IIN\|_F}{\IIN},
\end{align*} 
where $s$ denotes the signal-to-noise ratio (SNR). Let $s=14$ dB, we can obtain tensors $\IY,\;\IY'$ directly.

Throughout this numerical simulation, we take the trade-off parameter $\mu=0.01$ for \eqref{Lap} and random initial points by MATLAB syntax \texttt{rand}/\texttt{randn} (i.e., $\bfA_3^0$, $\bfB_2^0$, $\bfB_3^0$ generated by \texttt{rand} and $\bfA_1^0$, $\bfA_2^0$, $\bfB_1^0$ generated by \texttt{randn}) for all test methods. We compare Algorithm \ref{algo1} with ASAP in  \cite{nikolova2019alternating} (i.e., the recursion \eqref{asap} with two blocks $\bfA$, $\bfB$) and accelerated ASAP with extrapolation in \cite{yang2022some}. The stepsizes are taken as $(\tau_i^k,\sigma_j^k)=(\hbox{trace}({\bm H}_{\bfA_i^k})+k$, $\hbox{trace}({\bm H}_{\bfB_j^k})+k)$ for Algorithm \ref{algo1}, $(\tau^k,\sigma^k)=(\hbox{trace}({\bm H}_{\bfA^k}),\hbox{trace}({\bm H}_{\bfB^k}))$ for ASAP and accelerated-ASAP. Herein,  the matrices for calculating stepsizes are listed  as follows
\begin{gather*}
	\bfH_{\bfA_1^k}:= \bfA_3^{k}\odot \bfA_2^k,\quad \bfH_{\bfA_2^k}=\bfA_{3}^k\odot \bfA_1^{k+1},\quad\bfH_{\bfA_3^k}=\bfA^{k+1}_2\odot \bfA^{k+1}_1,\\
	\bfH_{\bfB_1^k}:= \bfB_3^{k}\odot \bfB_2^k,\quad \bfH_{\bfB_2^k}=\bfB_{3}^k\odot \bfB_1^{k+1},\quad \bfH_{B_3^k}=\bfB^{k+1}_2\odot \bfB^{k+1}_1,\\
	\bfH_{\bfA^k}=\begin{pmatrix}
		\bfA_3^{k}\odot \bfA_2^k\\
		\bfA_3^{k}\odot \bfA_1^k\\
		\bfA_2^{k}\odot \bfA_3^k
	\end{pmatrix},\quad \bfH_{\bfB^k}=\begin{pmatrix}
		\bfB_3^{k}\odot \bfB_2^k\\
		\bfB_3^{k}\odot \bfB_1^k\\
		\bfB_2^{k}\odot \bfB_3^k
	\end{pmatrix}.
\end{gather*}
Moreover, the extrapolation parameter for accelerated-ASAP is given by $\alpha_i^k=\beta_j^k=1-(t_{k-1}-1)/t_k$ for $i=1,2,3$ and $j=1,2,3$, where  $t_k:=(1+\sqrt{\smash[b]{1+4t_{k-1}^2}})/2$ with $t_{-1}=t_0=1$. Particularly, let $f(\bfx):=\|\bfx-{\bm b}\|_{1}=\sum_{l=1}^n|\bfx_l-{\bm b}_l|$ and the closed-form proximity of $f$ involving the $\bfA_3$- or $\bfB_1$- subproblem is
\begin{align*}
	\hbox{prox}_{tf}(\bfx)=\left( \hbox{soft}_{[-t,t]}(\bfx_l) \right)_{1\le l\le n},\quad \hbox{with}\quad  \hbox{soft}_{[-t,t]}(\bfx_l)=\left\{
	\begin{aligned}
		&\bfx_l+t,\quad \bfx_l<{\bm b_l}-t;\\
		&{\bm b}_l,\quad    {\bm b}_l-t\le  \bfx_l\le {\bm b}_l+t;\\
		& \bfx_l-t,\quad  \hbox{otherwise}.
	\end{aligned} 	
	\right.
\end{align*}

\begin{figure}[t]
	\centering
	\setlength\tabcolsep{2pt}
	\begin{tabular}{ccccc}
		\multicolumn{2}{c}{\includegraphics[width=0.45\textwidth]{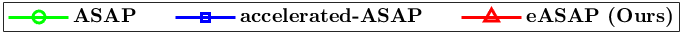}}\\
		\includegraphics[width=0.4\textwidth,height=0.28\textwidth]{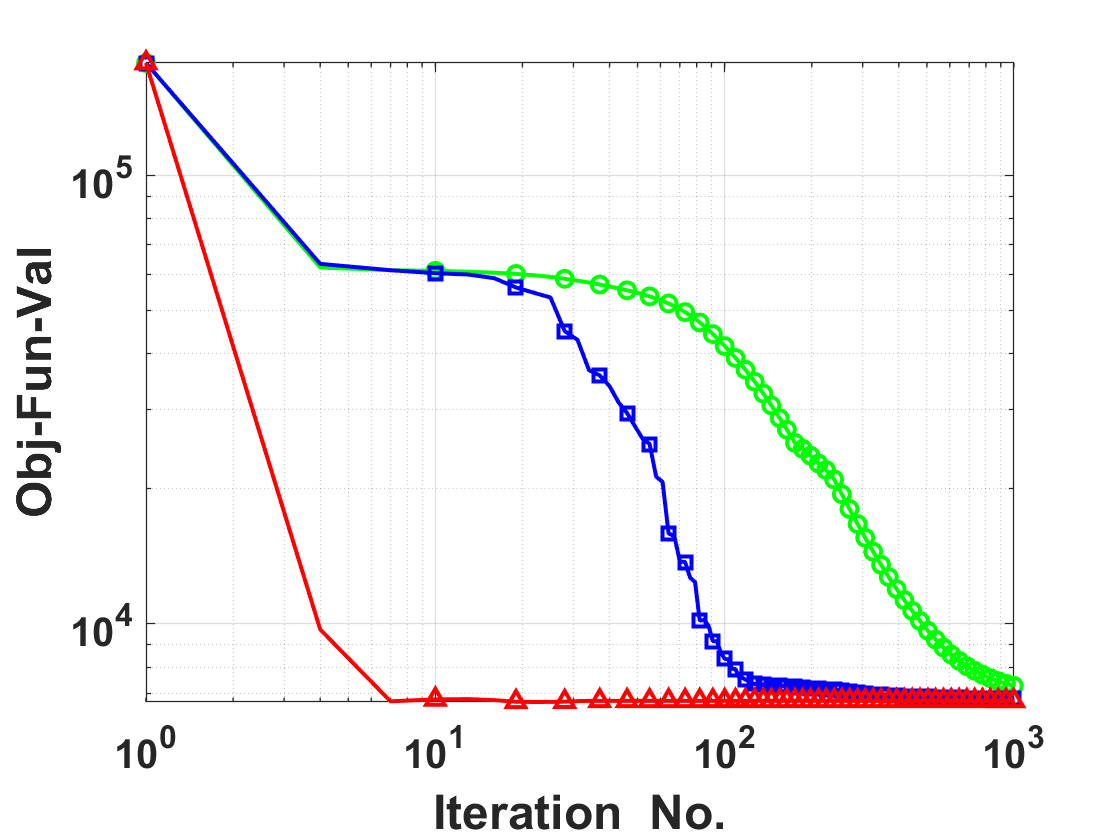}&
		\includegraphics[width=0.4\textwidth,height=0.28\textwidth]{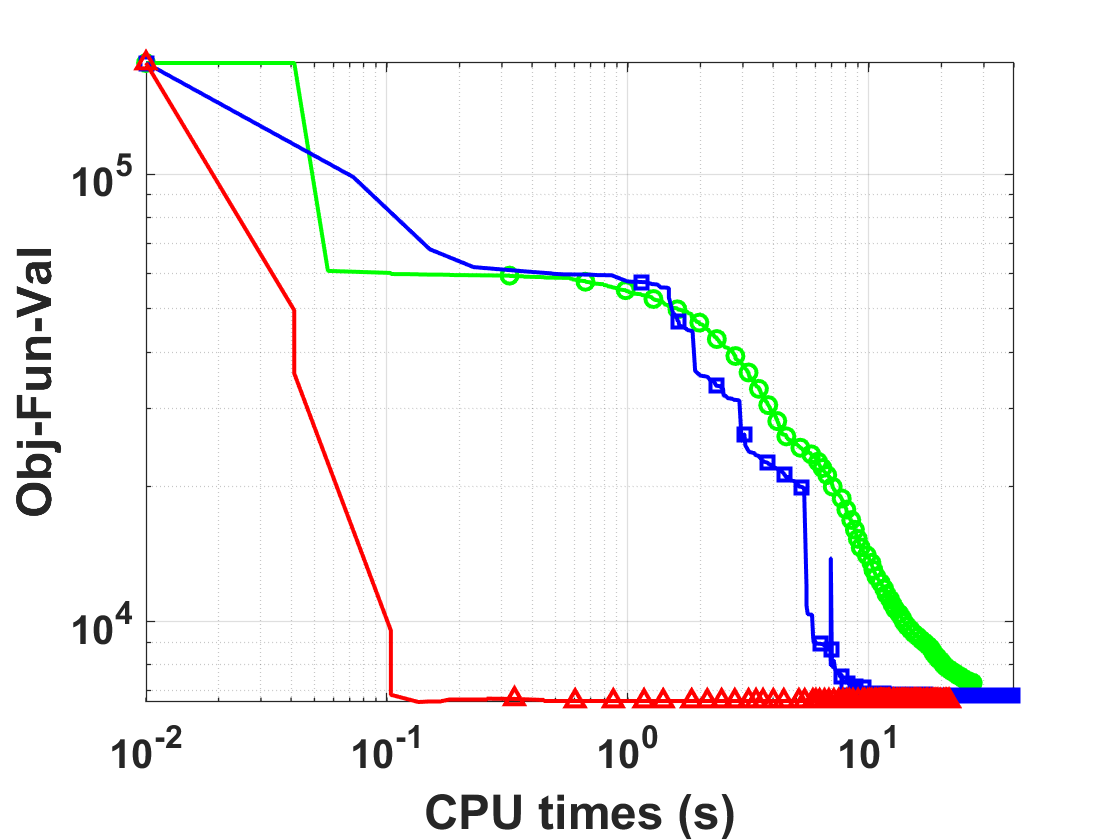}
	\end{tabular}
	\caption{The objective function values with respect to  iterations (left) and CPU time (right) for solving problem \eqref{Lap}. }\label{Figdecay}
\end{figure}

\begin{figure}[t]  
	\centering
	\setlength\tabcolsep{2pt}
	\begin{tabular}{ccccc}
		\multicolumn{2}{c}{\includegraphics[width=0.45\textwidth]{fig/legend_l1}}\\
		\includegraphics[width=0.4\textwidth,height=0.28\textwidth]{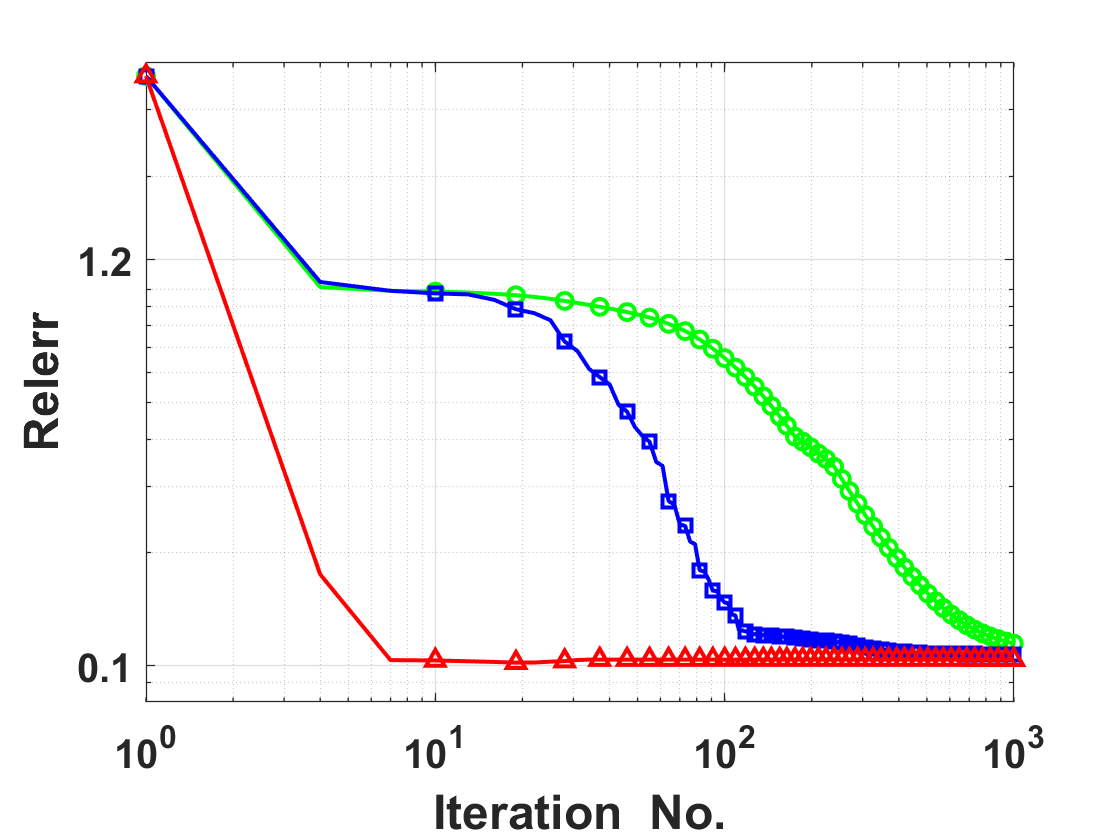}&
		\includegraphics[width=0.4\textwidth,height=0.28\textwidth]{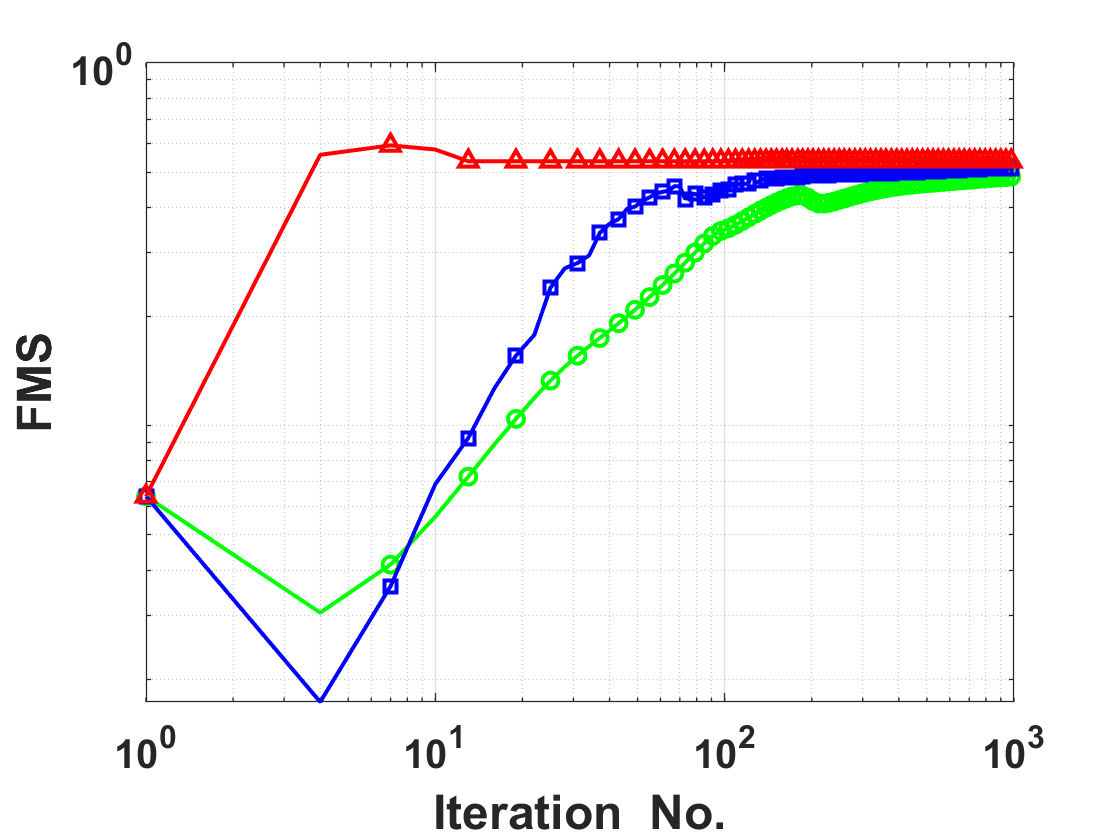}
	\end{tabular}
	\caption{The relative error (Relerr) and factor match score (FMS) with respect to iterations for solving problem \eqref{Lap}.}\label{Relerdecay}
\end{figure}

Figure \ref{Figdecay} displays some evolutions of objective function values in \eqref{Lap} with respect to iterations and computing time in seconds. Those evolutions illustrate that Algorithm \ref{algo1} renders fast decays of objective function values (always with little perceptual modifications after 10 iterations), which demonstrate that Algorithm \ref{algo1} outperforms accelerated-ASAP and ASAP to reach stable evolutions. To quantify the numerical  performances of test methods, we adopt the measurements relative error (``Relerr'') and factor match score (``FMS'') as in  \cite{xu2013block,schenker2020flexible}
\begin{align*}
	&\hbox{Relerr}:=\frac{1}{2}\left(\big\|\IY-\lr \bfA_1,\bfA_2,\bfA_3\rr \big\|_F^2/\|\IY\|_F^2+\big\|\IY'-\lr \bfB_1,\bfB_2,\bfB_3\rr\big\|_F^2/\|\IY'\|_F^2\right),\\
	&\hbox{FMS}:=\frac{1}{2}\left(\frac{1}{5}\sum\limits_{r=1}^5 \prod_{i=1}^3\frac{\langle \bfA_i(:,r),\bfA_i^{\hbox{\tiny true}}(:,r) \rangle }{\|\bfA_i(:,r)\|\|\bfA^{\hbox{\tiny true}}_i(:,r)\|}+\frac{1}{5}\sum\limits_{r=1}^5 \prod_{i=1}^3\frac{\langle \bfB_i(:,r),\bfB_i^{\hbox{\tiny true}}(:,r) \rangle }{\|\bfB_i(:,r)\|\|\bfB^{\hbox{\tiny true}}_i(:,r)\|}\right).
\end{align*}
Higher FMS value and lower Relerr value indicate a more preferred reconstruction performance. Figure \ref{Relerdecay} displays the evolutions of Relerr and FMS with respect to iterations. Visually, the Gauss-Seidel algorithmic framework of eASAP facilitates more accurate numerical solution at the initial stage of iterations and our eASAP also outperforms the other two algorithms in solution quality.

\subsection{Real image data}
For the numerical experiments on real data, we focus on the hyperspectral super-resolution problem. Concretely, it refers to fusing a hyperspectral image (HSI) and multispectral image (MSI) to produce a super-resolution image (SRI) with good spatial and spectral resolutions. This task is illustrated in Figure \ref{hyperspectral_task}.
\begin{figure}[htbp]	
	\centering
	\includegraphics[width=4.3cm]{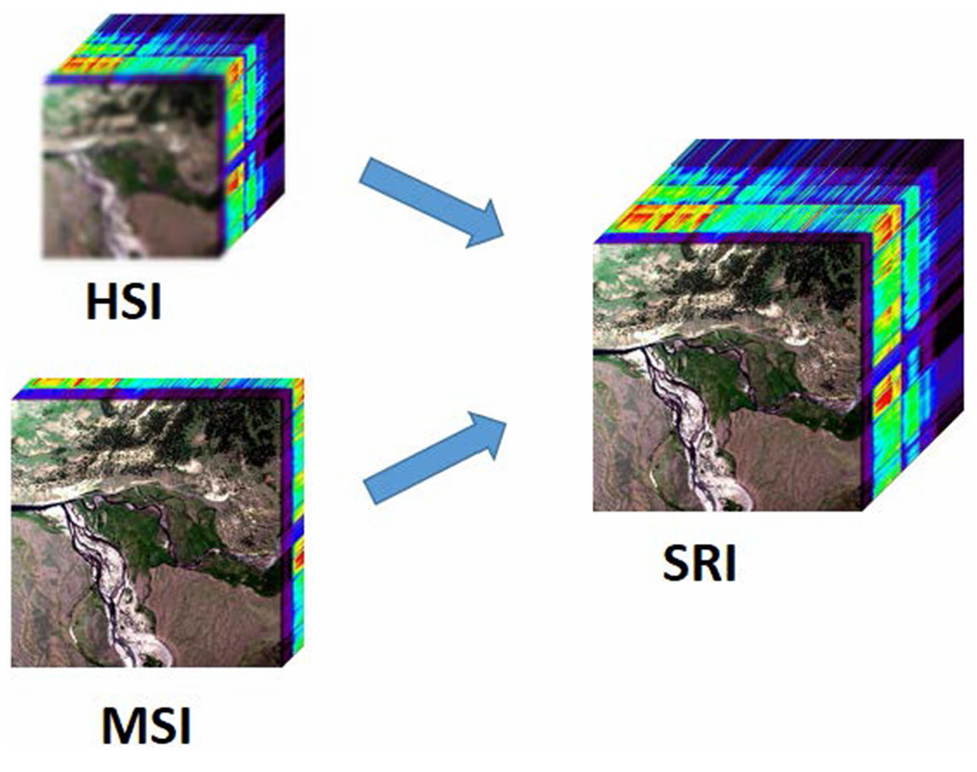}
	\caption{Illustration of the hyperspectral super-resolution task \cite{kanatsoulis2018hyperspectral}.}
	\label{hyperspectral_task}
\end{figure}
Based on coupled tensor CP decomposition, Charilaos et al. \cite{kanatsoulis2018hyperspectral} developed a model for hyperspectral  super-resolution as follows
\begin{align}\label{ste}
	\min\limits_{\bfA,\bfB,\bfC}\; \Big\|\IY_h-\lr \bfP_1 \bfA,\bfP_2\bfB,\bfC\rr\Big\|_F^2+\lambda\Big\|\IY_m-\lr \bfA,\bfB,\bfP_m\bfC\rr\Big\|_F^2,
\end{align}
where $\IY_h\in\IR^{I_h\times J_h\times K}$ and $\IY_m\in\IR^{I\times J\times K_m}$ are the given HSI and  MSI, respectively. $\bfP_1$, $\bfP_2$, $\bfP_m$ are the known degradation operators. The fusion SRI is obtained by $\IY_{s}=\lr \bfA,\bfB,\bfC\rr$. In \cite{kanatsoulis2018hyperspectral}, the alternating minimization method is adopted to solve \eqref{ste}. However, there is no guaranteed convergence for this method and it is heavily dependent on the initial  point. 
Hence, we modify  \eqref{ste} to a joint Gauss coupling model for hyperspectral super-resolution problem. More concretely, the modified model is 
\begin{align}\label{hyper}
	\min\limits_{\bfA,\bfB} \Big\|\IY_h-\lr \bfA_1,\bfA_2,\bfA_3\rr\Big\|_F^2+\lambda\Big\|\IY_m-\lr \bfB_1,\bfB_2,\bfB_3\rr\Big\|_F^2+\mu\big\|[\bfA;\bfB]
	\big\|_{\bm Q}^2
	,
\end{align}
where 
\begin{align*}
	{\bm Q}:=\begin{pmatrix}
		\bfP {\bm\Sigma}_1 \bfP^\top&-\bfP{\bm\Sigma}_1\\
		-{\bm \Sigma}_1\bfP^\top &{\bm\Sigma}_1+{\bm \Sigma}_2
	\end{pmatrix},\quad  \bfP:=\begin{pmatrix}
		\bfP_1 & &\\
		&\bfP_2&\\
		& & \bfP_m
	\end{pmatrix},\quad {\bf\Sigma}_1:=\sigma_1{\bm I},\quad {\bf\Sigma}_2:=\sigma_2{\bm I}.
\end{align*}
As a comprehensive description, we expand the last term in \eqref{hyper} by 
\begin{align*}
	\mu\big\|[\bfA;\bfB]
	\big\|_{\bm Q}^2=&
	\sigma_1\|\bfA_1-\bfP_1\bfB_1\|_F^2+\sigma_1\|\bfA_2-\bfP_2\bfB_2\|_F^2+\sigma_1\|\bfB_3-\bfP_m\bfA_3\|_F^2\\
	& +\sigma_2\|\bfB_1\|_F^2+\sigma_2\|\bfB_2\|_F^2+\sigma_2\|\bfA_3\|_F^2,
\end{align*}
where $\bfA_1\in\IR^{I_h\times R}$, $\bfA_2\in\IR^{J_h\times R}$ and $\bfA_3\in\IR^{K\times R}$; and $\bfB_1\in\IR^{I\times R}$, $\bfB_2\in\IR^{J\times R}$ and $\bfB_3\in\IR^{K_m\times R}$. Obviously, it falls into the abstract model \eqref{geP} with 
\begin{align*}
	F(\bfA):=\Big\|\IY_h-\lr \bfA_1,\bfA_2,\bfA_3\rr\Big\|_F^2,\quad 
	G(\bfB):=\lambda\Big\|\IY_m-\lr \bfB_1,\bfB_2,\bfB_3\rr\Big\|_F^2,\quad
	H(\bfA,\bfB):=\mu\big\|[\bfA;\bfB]
	\big\|_{\bm Q}^2,
\end{align*}
where $\bfA:=\big[\bfA_1;\bfA_2;\bfA_3\big]$,  $\bfB:=\big[\bfB_1,\bfB_2,\bfB_3\big]$.

\begin{figure}[h]
	\centering
	\setlength\tabcolsep{2pt} 
	\begin{tabular}{ccc}
		\includegraphics[height=3.3cm,width=3.3cm]{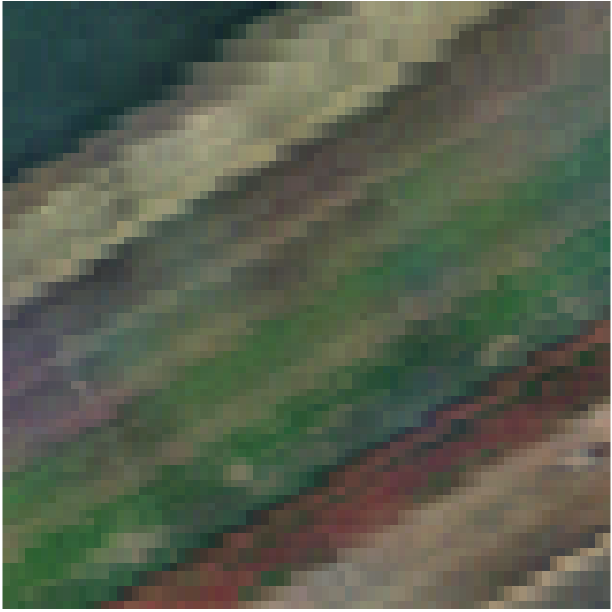}&
		\includegraphics[height=3.3cm,width=3.3cm]{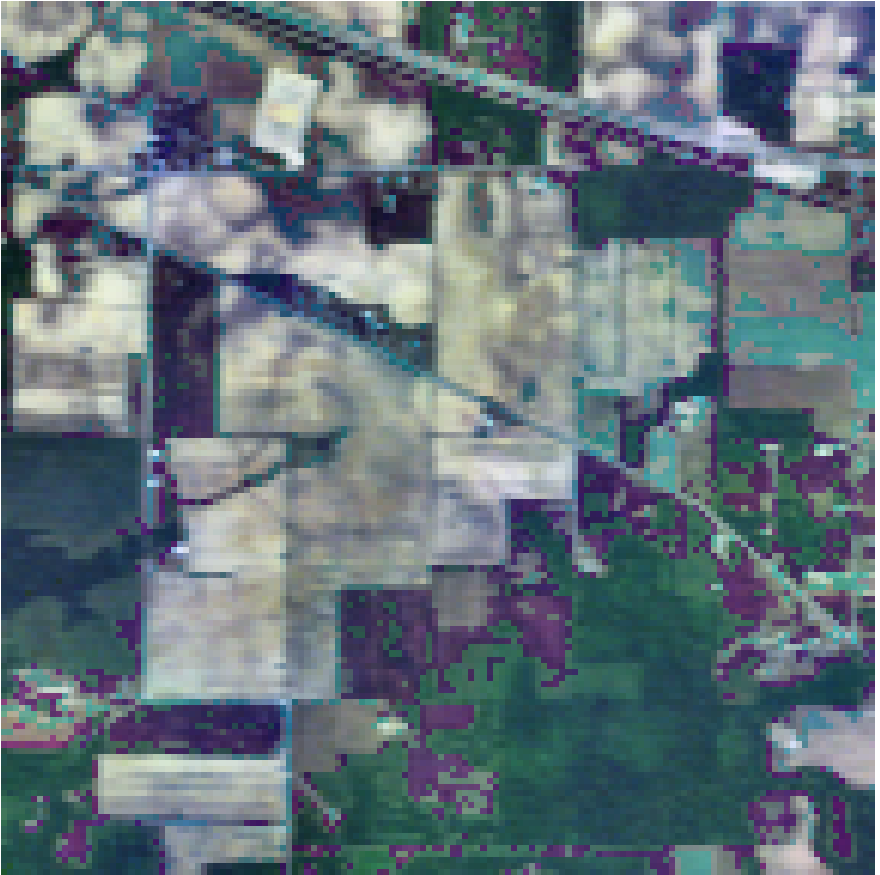}&    		\includegraphics[height=3.3cm,width=3.3cm]{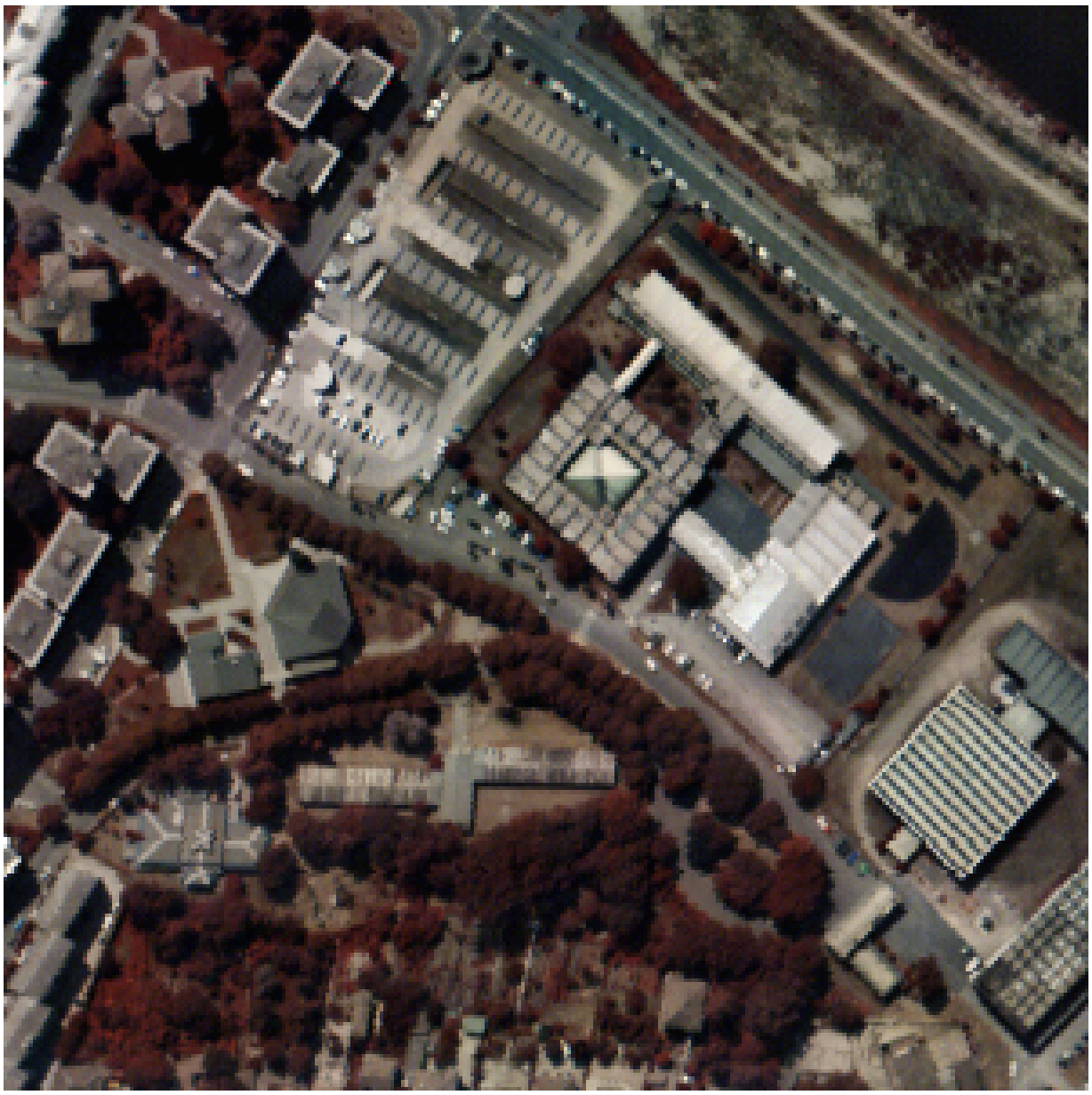}\\
		Salinas & Indian Pines & Pavia Centre
	\end{tabular}
	\caption{Testing hyperspectral images. (a) $80\times 80\times 204$ subscene of Salinas datasets. (b) $144\times 144\times 220$ subscene of Indian Pines dataset. (c) $300\times 300\times 102$ subscene of Pavia Centre dataset.}\label{hyperimg}
\end{figure}

The testing hyperspectral images\footnote{\url{https://www.ehu.eus/ccwintco/index.php/Hyperspectral_Remote_Sensing_Scenes}\label{web} }   are shown in Figure \ref{hyperimg}. We follow the convention in \cite{kanatsoulis2018hyperspectral} that these hyperspectral images act as target SRIs. Hence, the recovery performance can be measured. The degradations from SRI to HSI and MSI are as follows: (i) SRI is first blurred by a $9\times 9$ Gaussian kernel and then downsampled every 4 pixels along each spatial dimension, then degradation matrices $\bfP_1$, $\bfP_2$ and HSI are obtained; (ii) According to spectral degradation sensors LANDSAT\footnote{\url{https://landsat.gsfc.nasa.gov}} and QuickBird\footnote{\url{https://www.satimagingcorp.com/satellite-sensors/quickbird/}}, we can form the spectral degradation matrix $\bfP_m$ and MSI; (iii) Zero-mean white Gauss noise is added to HSI and MSI with SNR being 20 dB and 30 dB, respectively.

To evaluate the quality of recovered SRIs, we adopt several metrics: reconstruction  signal-to-noise (R-SNR), structural similarity (SSIM), cross correlation (CC), root mean square error (RMSE), spectral angle mapper (SAM). The interested reader can refer to, e.g.,  \cite{kanatsoulis2018hyperspectral,wei2015fast,ding2020hyperspectral}, for definitions. Additionally, higher R-SNR, SSIM, CC, and lower RMSE, SAM indicate better reconstruction performance. 

We now compare the numerical performance of Algorithm \ref{algo1} with FUSE  \cite{wei2015fast} and STEREO  \cite{kanatsoulis2018hyperspectral} on solving \eqref{hyper}. Note that  STEREO is used to solve minimization \eqref{ste}, and FUSE contributes to solving the Sylvester equation. The step sizes in Algorithm \ref{algo1} are taken as 
\begin{align*}
	\tau^k_i=\hbox{trace}(\bfH_{\bfA^k_i})\;\;\hbox{for}\;\; i=1,2,3,\quad\hbox{and}\quad
	\sigma^k_j=\hbox{trace}(\bfH_{\bfB^k_j})\;\;\hbox{for}\;\; j=1,2,3,
\end{align*}
where 
\begin{align*}
	&\bfH_{\bfA_1^k}:= \bfA_3^{k}\odot \bfA_2^k,\quad \bfH_{\bfA_2^k}=\bfA_{3}^k\odot \bfA_1^{k+1},\quad \bfH_{\bfA_3^k}=\bfA^{k+1}_2\odot \bfA^{k+1}_1,\\
	&\bfH_{\bfB_1^k}:= \bfB_3^{k}\odot \bfB_2^k,\quad \bfH_{\bfB_2^k}=\bfB_{3}^k\odot \bfB_1^{k+1},\quad \bfH_{\bfB_3^k}=\bfB^{k+1}_2\odot \bfB^{k+1}_1.
\end{align*}
The initial point  $(\bfA^0,\bfB^0)$ is chosen by the initialization technique of STEREO (see e.g., \cite{kanatsoulis2018hyperspectral} for more details). The model parameters $\lambda$, $\sigma_1$, $\sigma_2$ and $\hbox{rank}_{cp}$ for \eqref{hyper};  $\lambda$, and $\hbox{rank}_{cp}$ for \eqref{ste}; the number of endmembers (model rank) $F$ for FUSE; and all data scales are listed in Table \ref{parameters}. 

Figure \ref{SaRs} exhibits the 32-th band of the estimated SRIs, corresponding residual images $\IY_s-\hat{\IY}_s$, and SAM maps on Salines data. Those figures illustrate that  Algorithm \ref{algo1} has small residues across all pixels, while other algorithm's residuals maps are less smooth. Meanwhile, the SAM map of our algorithm is relative closer to the ideal one, which is displayed in the last column. The results on Indian Pines data and Pavia Centra data are displayed in Figures \ref{InRs} and \ref{PaRs}, respectively. More details for numerical comparisons under the five aforementioned metrics can be seen in Table \ref{results}. Therein, numbers in bold indicate the best performance. In conclusion, our proposed Algorithm \ref{algo1} outperforms the baseline FUSE entirely and performs better than STEREO.
\begin{table}[htbp] 
	\setlength\tabcolsep{3.5pt}
	\renewcommand{\arraystretch}{1}
	\begin{center}
		\caption{Data scale and model settings for solving problem \eqref{hyper}.}
		\label{parameters}
		\scalebox{0.9}{
			\begin{tabular}{ccccccccccc}
				\toprule[1pt]
				&\multicolumn{4}{c}{Data Scale} &\multicolumn{3}{c}{Ours} 
				&\multicolumn{2}{c}{STEREO}
				&\multicolumn{1}{c}{FUSE}
				\\ 
				\cmidrule(r){2-5} \cmidrule(r){6-8} \cmidrule(r){9-10}
				& $\IY_h$ & $\IY_m$ & $\bfP_1/\bfP_2$ & $\bfP_m$ & $\lambda$ & $(\sigma_1,\sigma_2)$ & $\hbox{rank}_{cp}$ & $\lambda$ & $\hbox{rank}_{cp}$ & F \\ 
				\toprule
				Salines & $20\times 20\times 204$ & $80\times 80\times 6$ & $20\times 80$ & $6\times 204$ & 0.1 & $(1,1)$ & 30 & 1 &30 & 6\\
				Indian Pines &  $36\times 36\times 220$ & $144\times 144\times 6$ & $36\times144$&$6\times 220$ & 10 & $(10,100)$ & 80 & 1 & 80 & 16\\
				Pavia Centre & $75\times 75\times 102$ & $300\times 300\times 4$ & $75\times 300$ &$4\times 102$ & 10 & $(1,1)$ & 300 &1 & 300 & 9
				\\
				\bottomrule[1pt]
		\end{tabular}}
	\end{center}
\end{table}
\begin{table}[htbp]
	\setlength\tabcolsep{3.5pt}
	\renewcommand{\arraystretch}{1.1}
	\caption{Performance of all test algorithms on three real image data for solving problem \eqref{hyper}.}
	\label{results}
	\begin{center}
		\scalebox{0.97}{
			\begin{tabular}{cccccccccc}
				\toprule[1pt]
				&\multicolumn{3}{c}{Salines} &\multicolumn{3}{c}{Indian Pines} 
				&\multicolumn{3}{c}{Pavia Centre}
				\\ 
				\cmidrule(r){2-4} \cmidrule(r){5-7} \cmidrule(r){8-10}
				Method (Ideal)& FUSE & STEREO & Ours & FUSE & STEREO & Ours & FUSE & STEREO & Ours\\ 
				\toprule
				R-SNR ($\infty$) & 23.658 & 24.536 & {\bf 25.606} &21.777 & 23.564 & {\bf 24.485} &21.455 &21.877 &{\bf 22.450} 
				\\
				SSIM (1) & 0.8937 & 0.9132 & {\bf 0.9359} & 0.2789 &0.3361 & {\bf 0.3777}&0.8304 &0.8329 &{\bf 0.8481}
				\\
				CC (1) & 0.8008 & 0.8075 &{\bf 0.8308} &0.5872 &0.6172 &{\bf 0.6448} &0.9835&0.9850 &{\bf  0.9981}
				\\
				RMSE (0) & 0.0131 & 0.0119 &{\bf 0.0105} &0.0241 &0.0196 &{\bf 0.0136}&0.0131 &0.0124 &{\bf 0.0907}
				\\
				SAM (0) &0.0619 &0.0557 &{\bf 0.0492} & 0.0749 &0.0636 &{\bf 0.0511}&0.1228&0.1197 &{\bf 0.0977}
				\\\bottomrule[1pt]
		\end{tabular}}
	\end{center}
\end{table}

\begin{figure}[htbp]
	\centering
	\setlength\tabcolsep{2pt}
	\begin{tabular}{ccccc}
		\includegraphics[height=2.3cm,width=2.3cm]{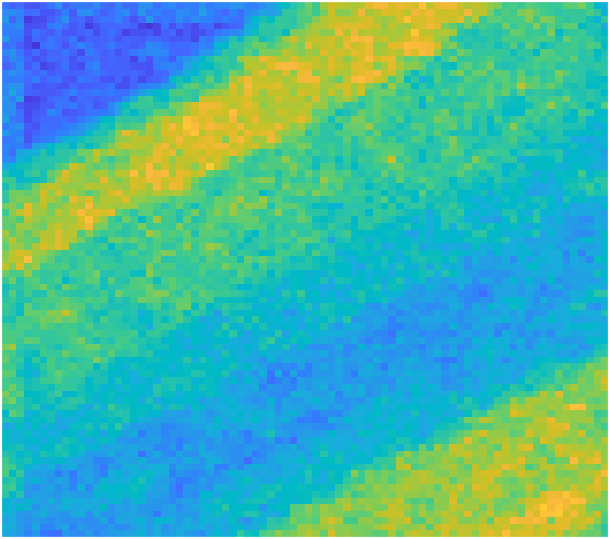}&			\includegraphics[height=2.3cm,width=2.3cm]{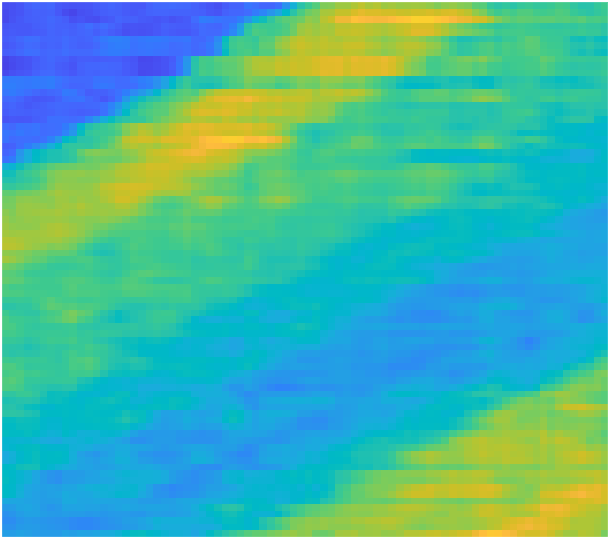}&			\includegraphics[height=2.3cm,width=2.3cm]{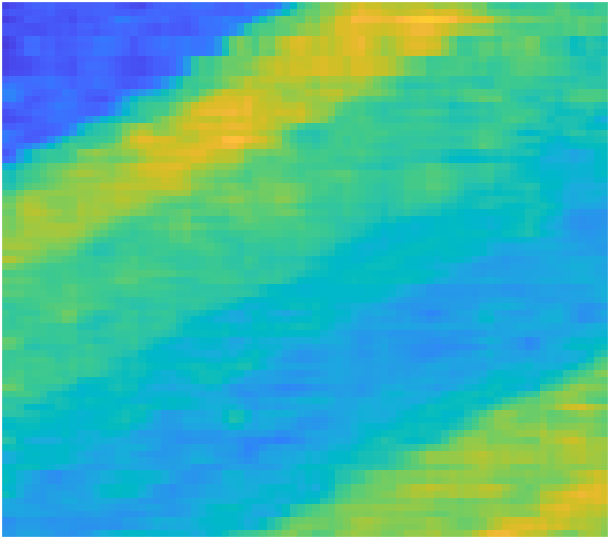}&		\includegraphics[height=2.3cm,width=2.3cm]{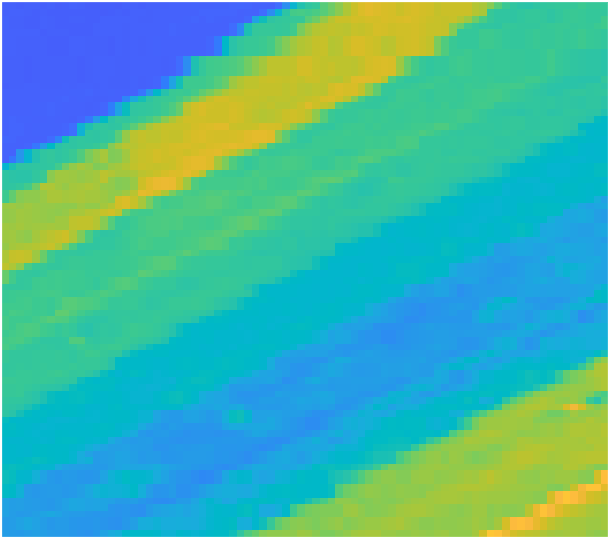}&    		\includegraphics[height=2.3cm,width=0.6cm]{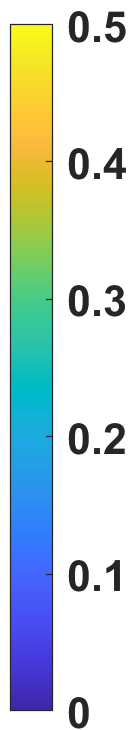}\\
		\includegraphics[height=2.3cm,width=2.3cm]{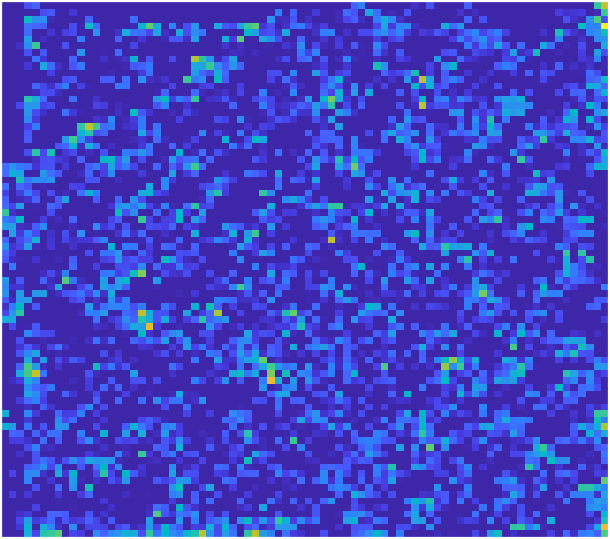}&			\includegraphics[height=2.3cm,width=2.3cm]{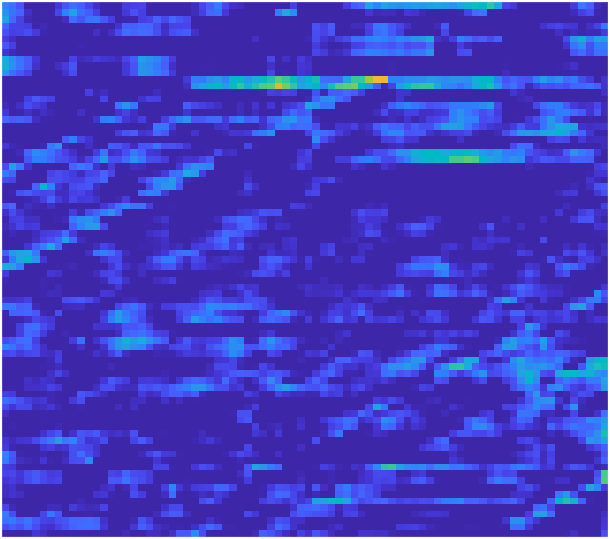}&			\includegraphics[height=2.3cm,width=2.3cm]{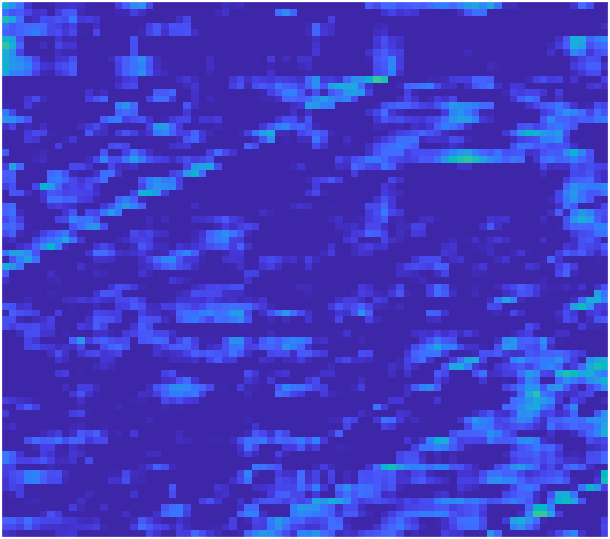}&  		\includegraphics[height=2.3cm,width=2.3cm]{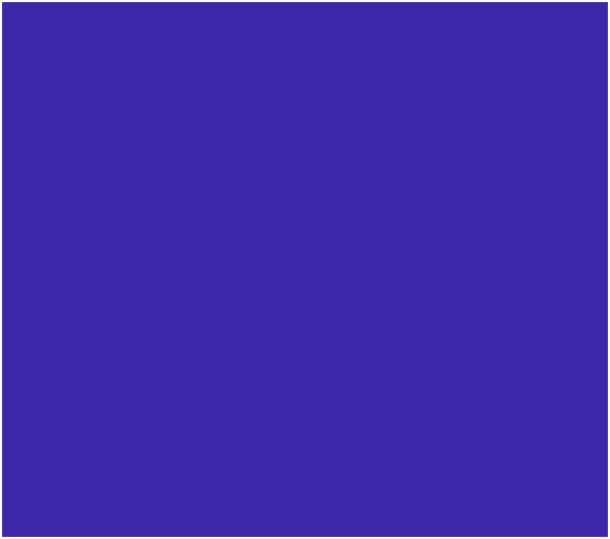}&  		\includegraphics[height=2.3cm,width=0.6cm]{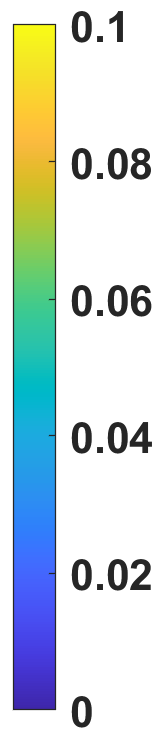}\\
		\includegraphics[height=2.3cm,width=2.3cm]{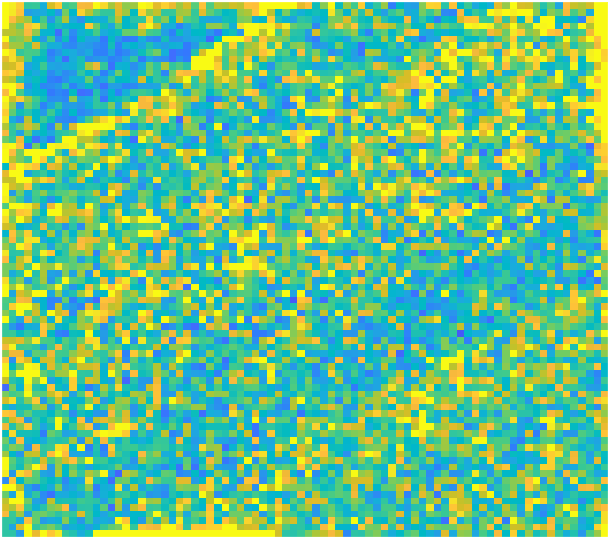}&			\includegraphics[height=2.3cm,width=2.3cm]{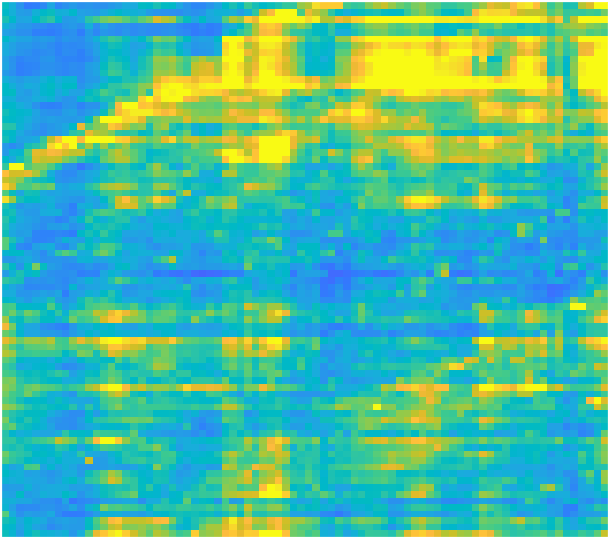}&			\includegraphics[height=2.3cm,width=2.3cm]{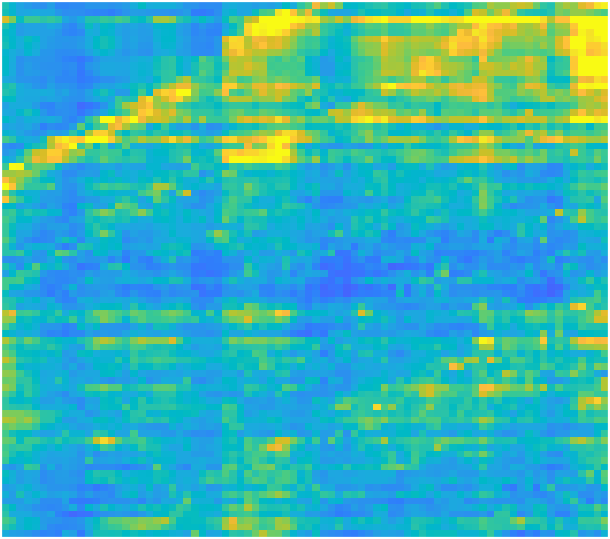}&		\includegraphics[height=2.3cm,width=2.3cm]{fig/Saline_res_SRI}&    		\includegraphics[height=2.3cm,width=0.6cm]{fig/Saline_res_legend}\\
		FUSE & STEREO & Ours & SRI
	\end{tabular}	
	\caption{Results of Salinas reconstructions by  solving \eqref{hyper}. The first row: the 32-th band of recovered SRIs. The second row: the 32-th band of corresponding residual images. The last row: the SAM maps.}	\label{SaRs}
\end{figure}

\begin{figure}[h!]
	\centering
	\setlength\tabcolsep{2pt}
	\begin{tabular}{ccccccc}
		\includegraphics[height=2.3cm,width=2.3cm]{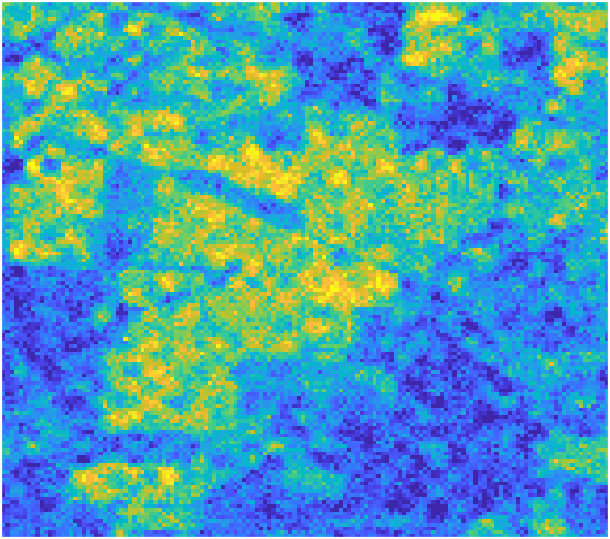}&
		\includegraphics[height=2.3cm,width=2.3cm]{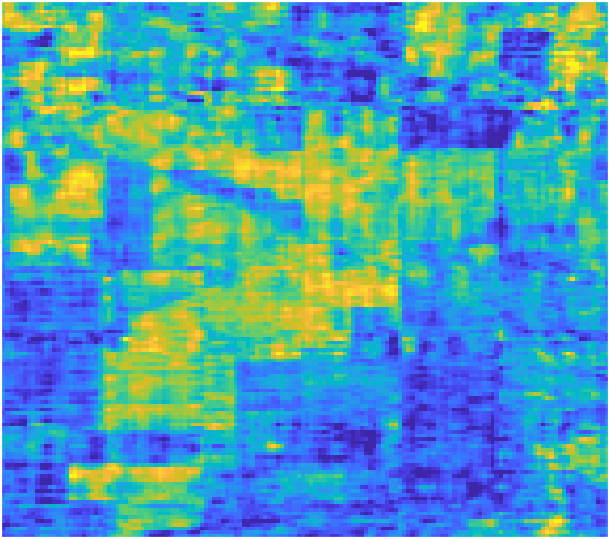}&
		\includegraphics[height=2.3cm,width=2.3cm]{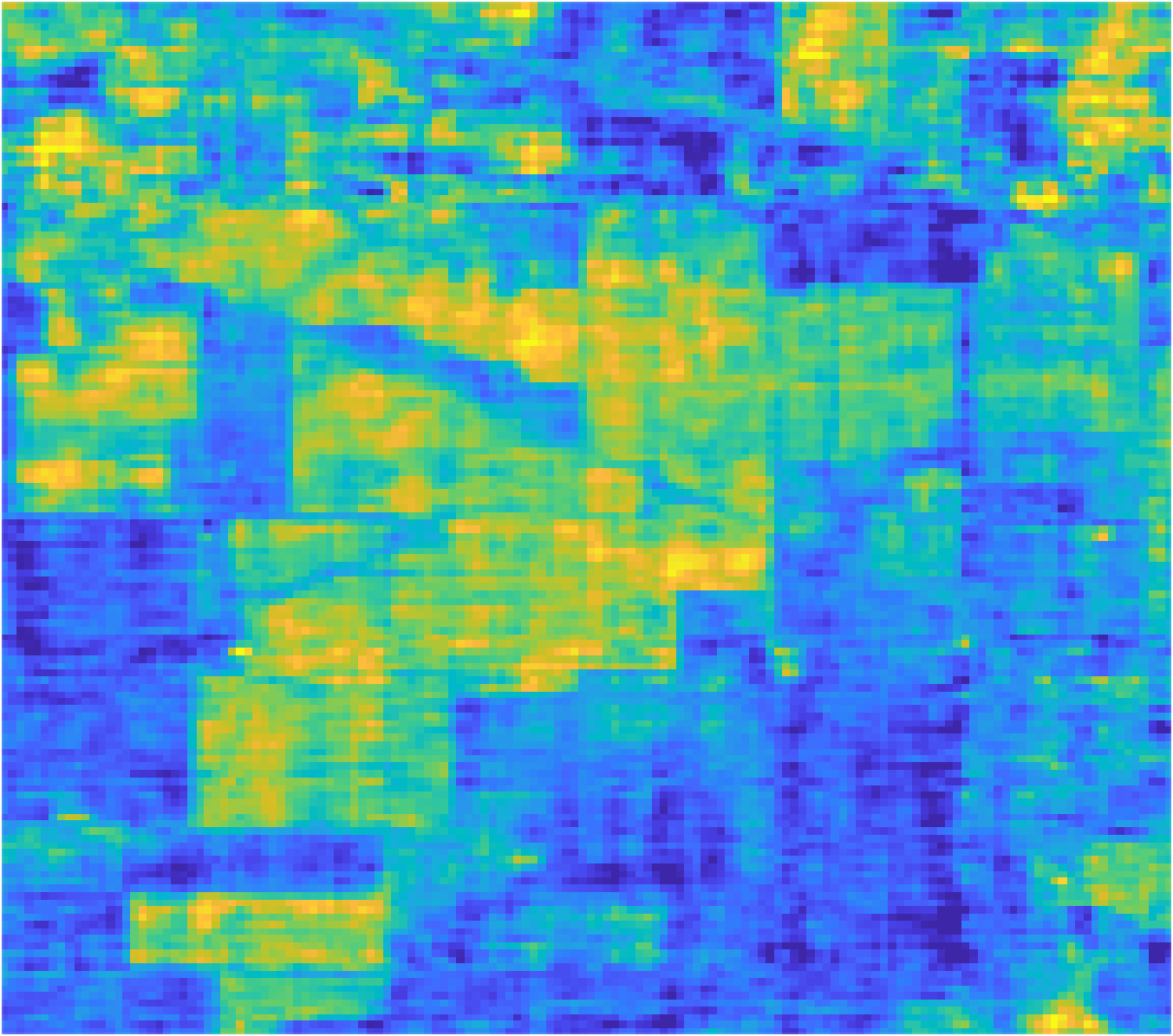}&
		\includegraphics[height=2.3cm,width=2.3cm]{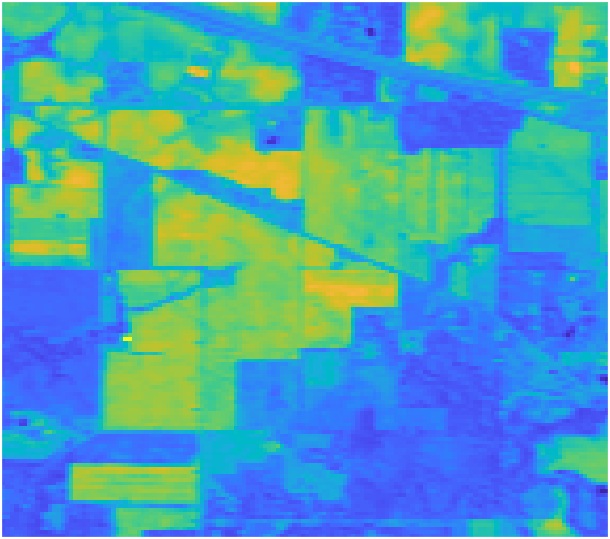}&
		\includegraphics[height=2.3cm,width=0.6cm]{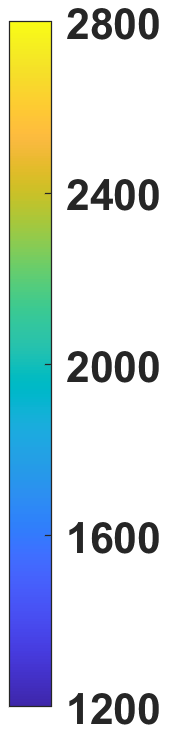}\\	
		
		\includegraphics[height=2.3cm,width=2.3cm]{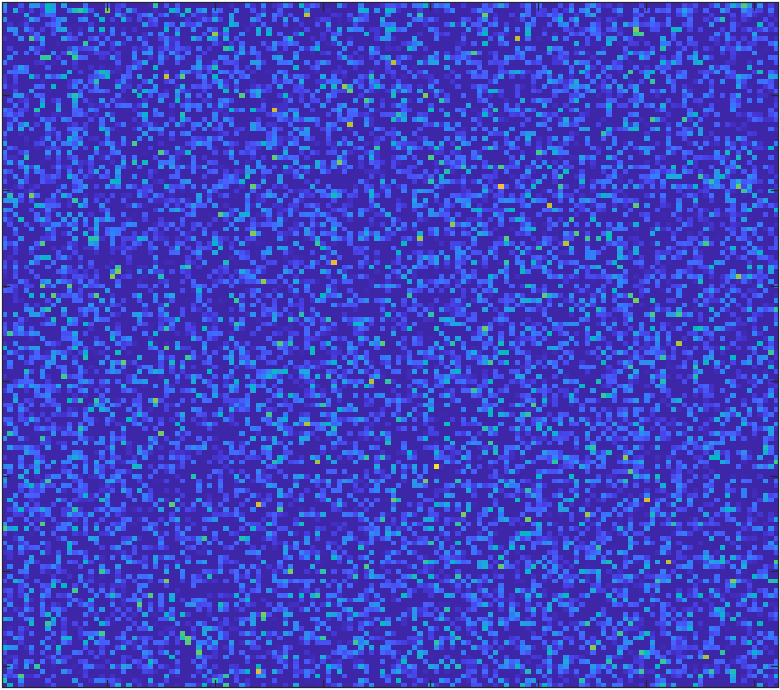}&			\includegraphics[height=2.3cm,width=2.3cm]{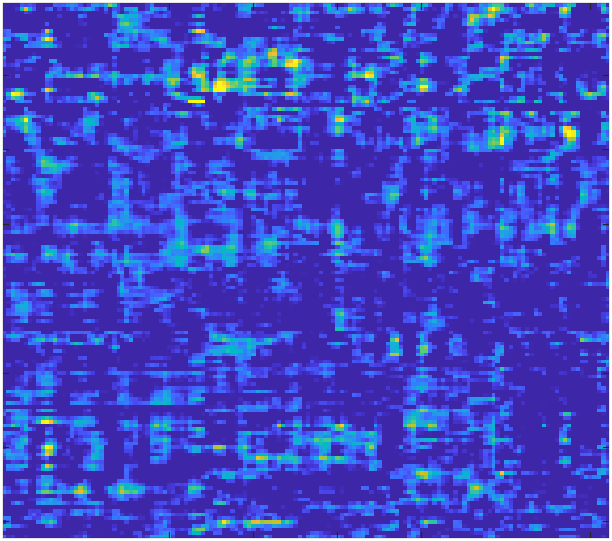}&				\includegraphics[height=2.3cm,width=2.3cm]{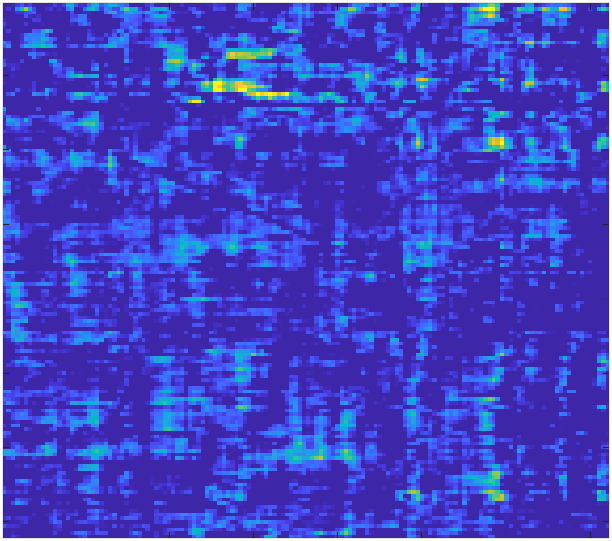}&	\includegraphics[height=2.3cm,width=2.3cm]{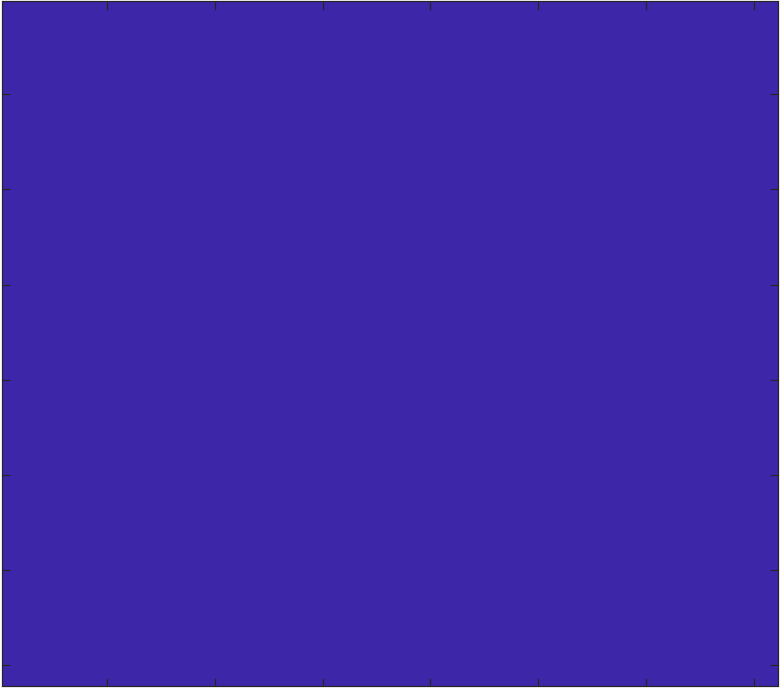}&
		\includegraphics[height=2.3cm,width=0.6cm]{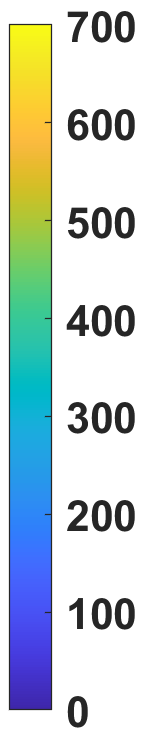}\\
		\includegraphics[height=2.3cm,width=2.3cm]{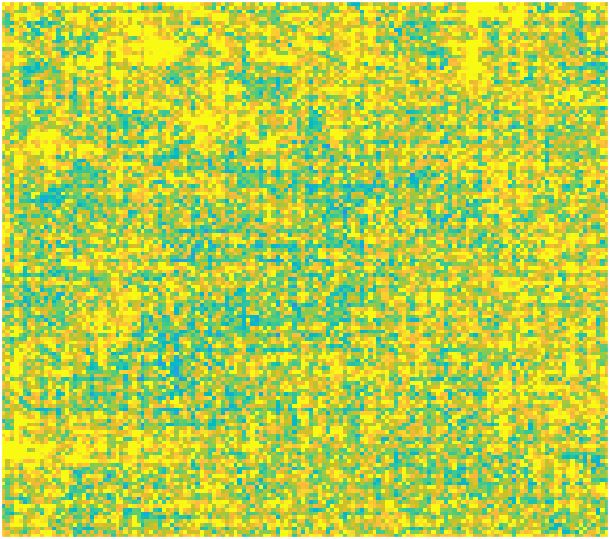}&
		\includegraphics[height=2.3cm,width=2.3cm]{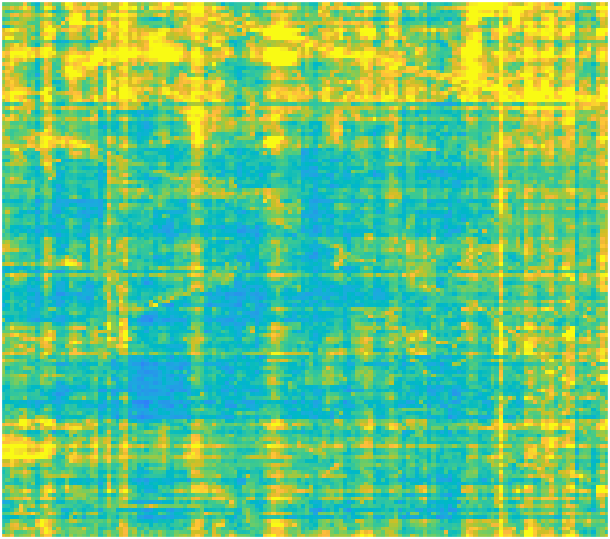}&			\includegraphics[height=2.3cm,width=2.3cm]{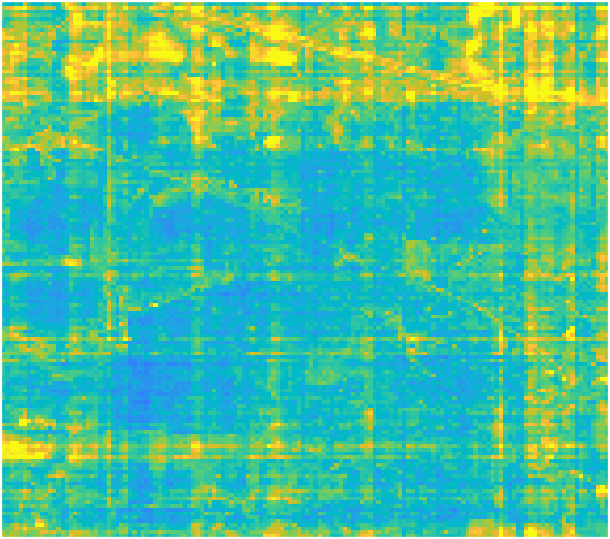}&			\includegraphics[height=2.3cm,width=2.3cm]{fig/Indina_res_SRI}&		\includegraphics[height=2.3cm,width=0.6cm]{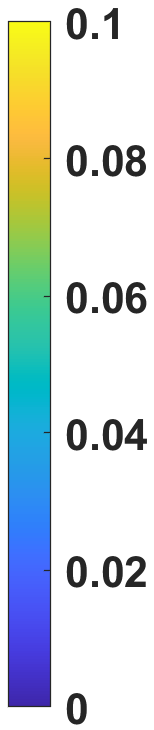}\\
		FUSE & STEREO & Ours & SRI
	\end{tabular}
	\caption{Results of Indian Pines reconstructions by  solving \eqref{hyper}. The first row: the 125-th band of recovered SRIs. The second row: the 125-th band of corresponding residual images. The last row: the SAM maps.}\label{InRs} 
\end{figure}
\begin{figure}[h!]
	\centering
	\setlength\tabcolsep{2pt}
	\begin{tabular}{cccccc}			\includegraphics[height=2.3cm,width=2.3cm]{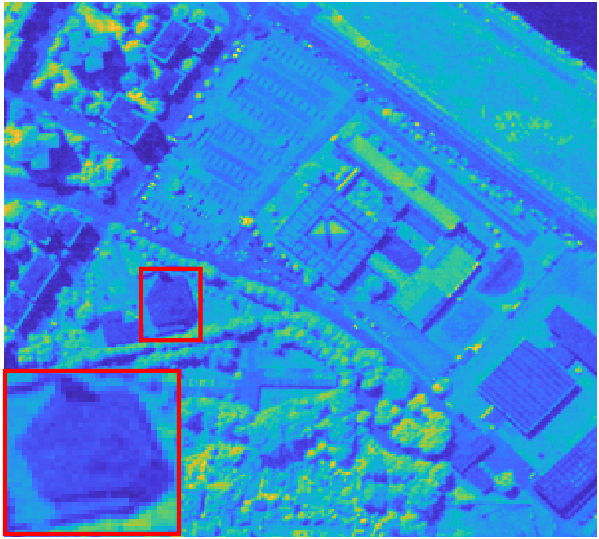}&			\includegraphics[height=2.3cm,width=2.3cm]{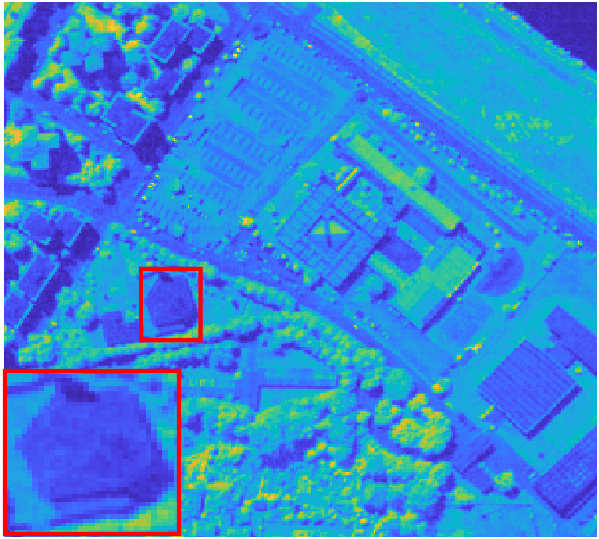}&
		\includegraphics[height=2.3cm,width=2.3cm]{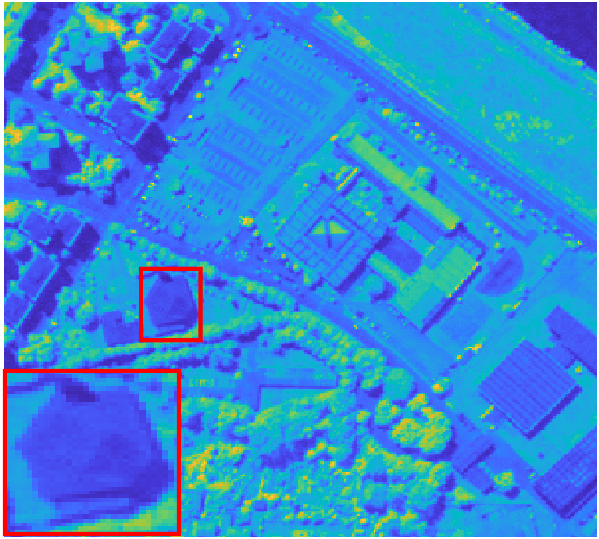} &
		\includegraphics[height=2.3cm,width=2.3cm]{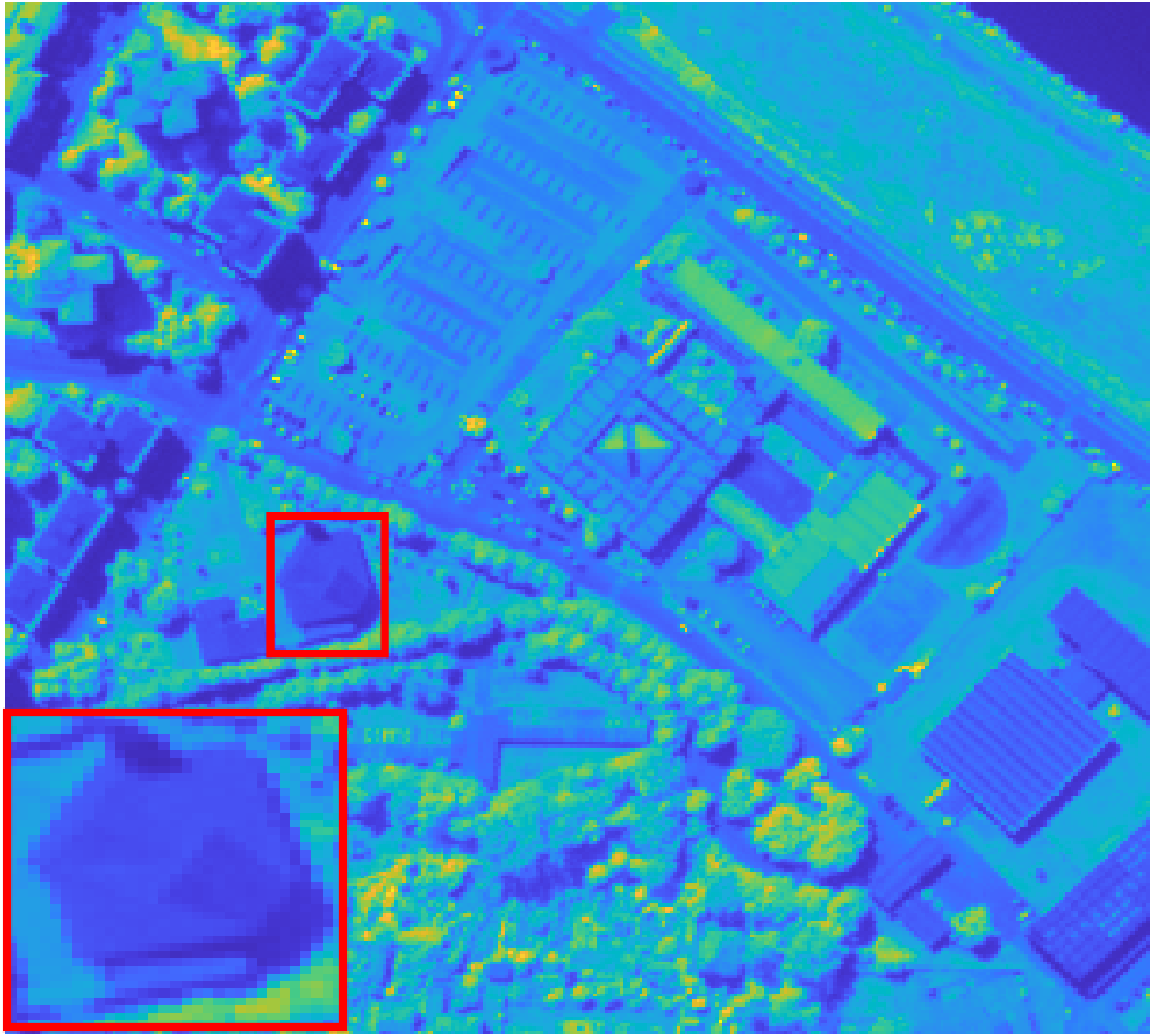}&
		\includegraphics[height=2.3cm,width=0.6cm]{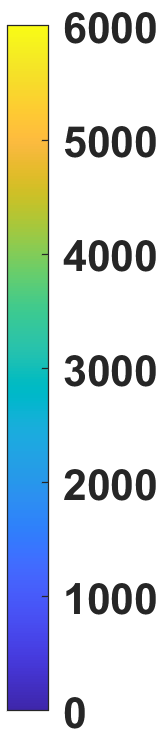}\\
		\includegraphics[height=2.3cm,width=2.3cm]{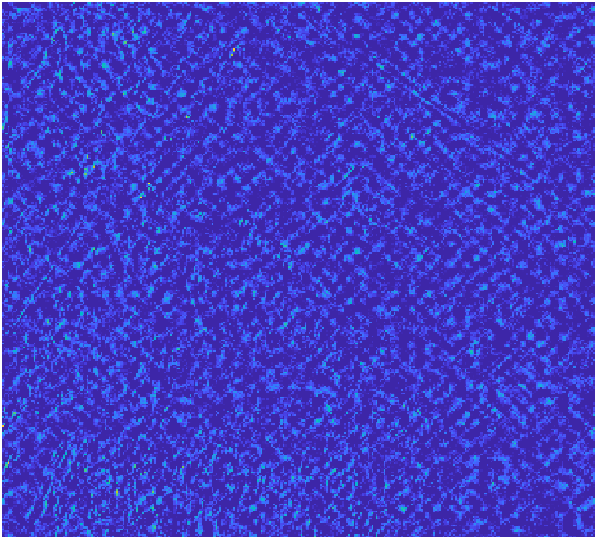}&
		\includegraphics[height=2.3cm,width=2.3cm]{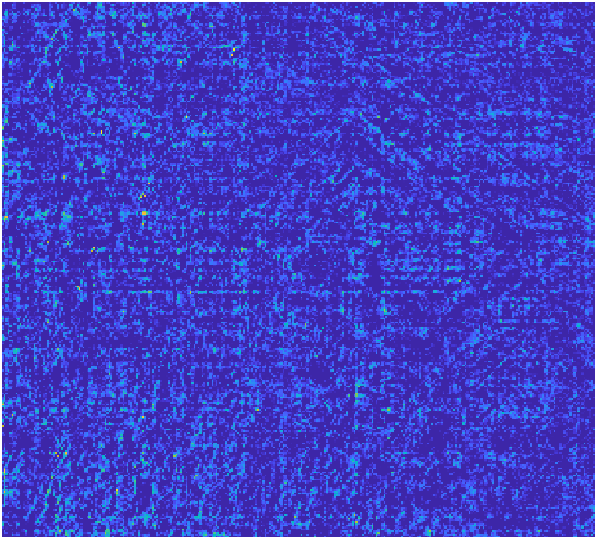}&
		\includegraphics[height=2.3cm,width=2.3cm]{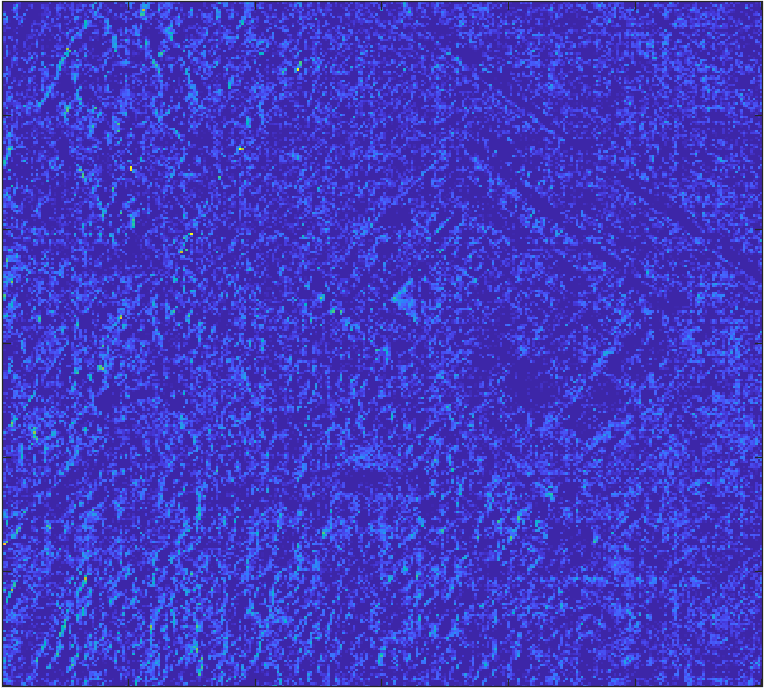}&
		\includegraphics[height=2.3cm,width=2.3cm]{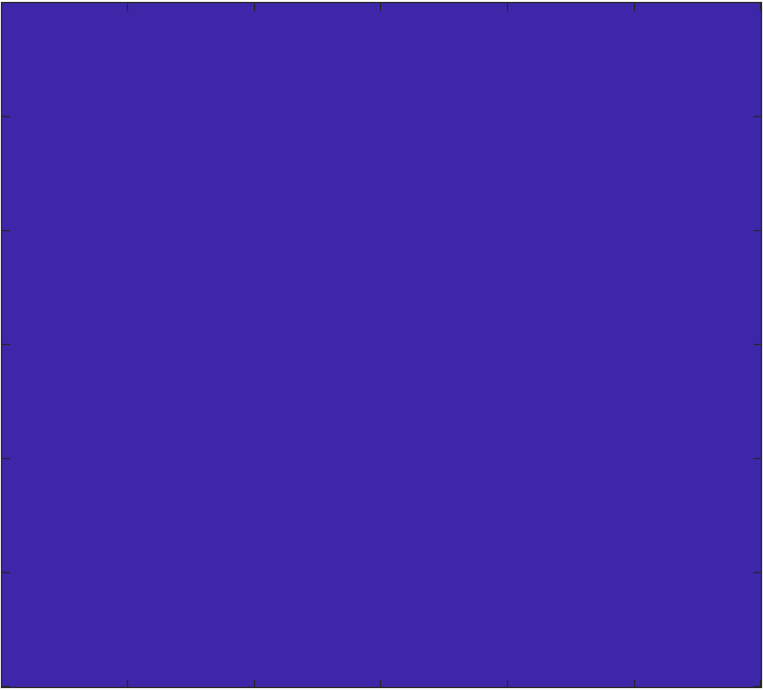}&
		\includegraphics[height=2.3cm,width=0.6cm]{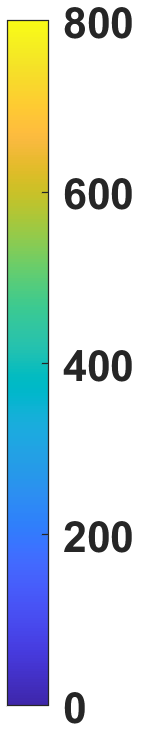}\\
		\includegraphics[height=2.3cm,width=2.3cm]{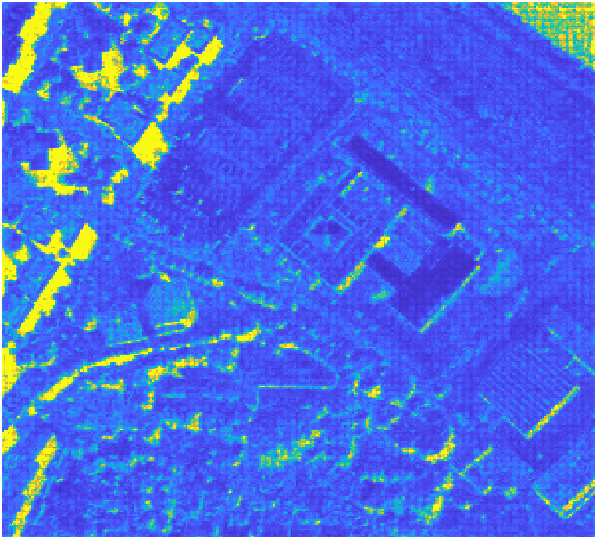}&
		\includegraphics[height=2.3cm,width=2.3cm]{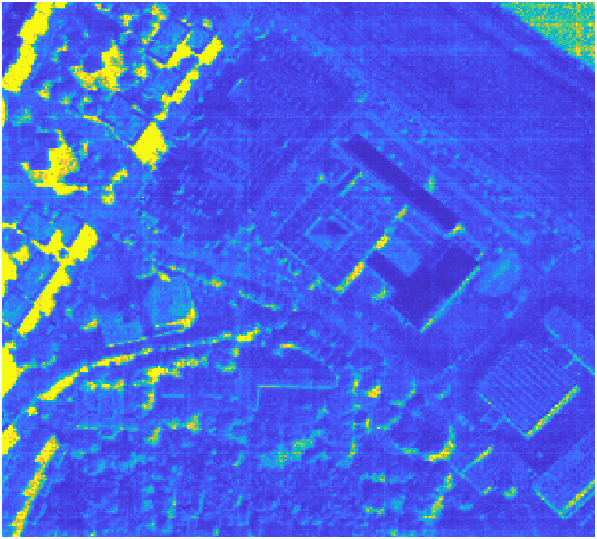}&		
		\includegraphics[height=2.3cm,width=2.3cm]{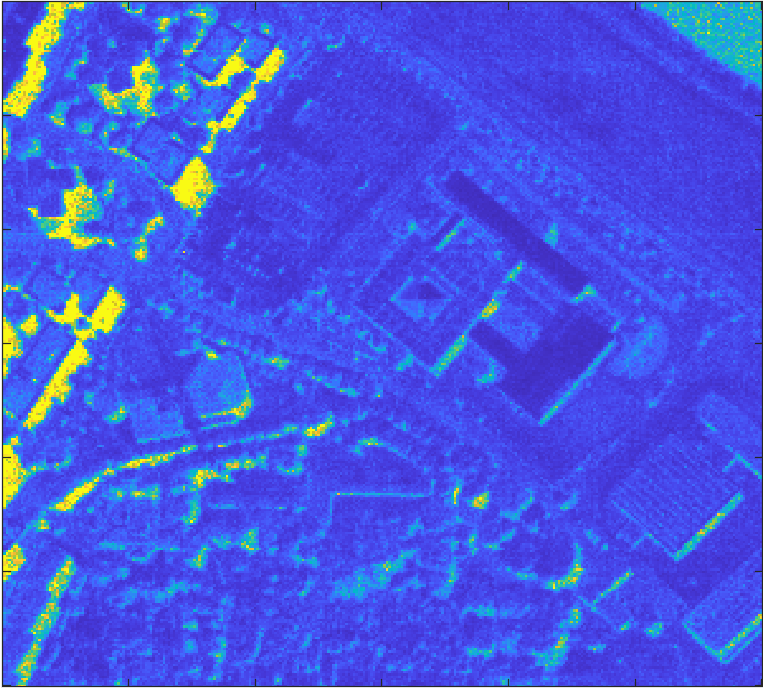}&
		\includegraphics[height=2.3cm,width=2.3cm]{fig/Pavia_res_SRI}&			\includegraphics[height=2.3cm,width=0.6cm]{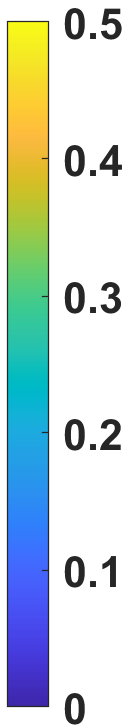}\\
		FUSE & STEREO & Ours & SRI
	\end{tabular}
	\caption{Results of Pavia Centre reconstructions by  solving \eqref{hyper}. The first row: the 100-th band of recovered SRIs. The second row: the 100-th band of corresponding residual images. The last row: the SAM maps.}\label{PaRs}	
\end{figure}

\newpage
\section{Concluding remarks}\label{sect6}
In this paper, we developed an extended ASAP (eASAP) for solving nonconvex nonsmooth optimization problem with the multiblock nonseparable structure. Under some mild assumptions, we analyzed the convergence (rate) of eASAP. Concretely, by the blockwise restricted prox-regularity of $H$ in Assumption \ref{assum2}, we proved that any limit point of the sequence generated by eASAP is a critical point of \eqref{geP}. Furthermore, we established  the global convergence  when the objective fulfills  Assumption \ref{assum:Aub} and the K{\L} property. Finally, we built upon the sublinear convergence rate based on the error function. Our novel convergence analysis covers a variety of nonconvex nonsmooth nonseparable coupling functions, which further extends the range of the model. Besides, as a peculiarity,   eASAP can be reduced to  ASAP when $s=t=1$.  Numerical simulations on multimodal data fusion demonstrate the compelling performance of the proposed method.

\bibliographystyle{is-abbrv}
\bibliography{mybib0601.bib}
\end{document}